\UseRawInputEncoding
\documentclass[12pt]{iopart}
\usepackage{ntheorem}
\usepackage{enumerate}
 \usepackage{ragged2e}
 \usepackage{caption}
\usepackage{hyperref}

\usepackage{amssymb}
  \theoremsymbol{\mbox{$\blacksquare$}}
\usepackage{euscript,eufrak,verbatim}
\usepackage[dvips]{graphicx}
\usepackage[final]{epsfig}
     \setcounter{page}{1}
\usepackage{url}

\newtheorem{example}{Example}
\newtheorem{theorem}{Theorem}
\newtheorem{corollary}{Corollary}

\newtheorem{lemma}{Lemma}
\newtheorem{proposition}{Proposition}

\newtheorem*{proof}{Proof}
\newtheorem{definition}{Definition}
\newtheorem{remark}{Remark}

\begin{document}

\title[Complete invariants and parametrization of expansive Lorenz maps]{Complete invariants and parametrization of expansive Lorenz maps}

\author{ Yiming Ding\& Yun Sun}

\address{  Center for Mathematical Sciences and Department of Mathematics, Wuhan University of Technology, Wuhan 430070, China}

\ead{dingym@whut.edu.cn and sunyun@whut.edu.cn }
\vspace{10pt}
\begin{indented}
\item[March 2021]
\end{indented}

\begin{abstract}
We obtain the complete conjugacy invariants of expansive Lorenz maps and for any given two expansive Lorenz maps, there are two unique sequences of $(\beta_{i},\alpha_{i})$ pairs. In this way, we can define the classification of expansive Lorenz maps. Moreover, we investigate the uniform linearization of expansive Lorenz maps through periodic renormalization.
\\
\\{Keywords:lorenz map, renormalization, kneading invariant, $\beta$-transformation, uniform linearization }
\\ Mathematics Subject Classification numbers: 37A35, 35B10, 37B40
\end{abstract}

%
%
%
%
%
\section{Introduction }
It is well known that whether two maps $f:X\rightarrow X$ and $g:Y\rightarrow Y$ are  topologically conjugate or not depends on if there exists a non-decreasing homeomorphism $h:X\rightarrow Y$ such that $h\circ f=g\circ h$. If $h$ is monotonic and continuous but not necessarily invertible, then $f$ and $g$ are said to be semi-conjugate. Topological conjugation defines an equivalence relation which is useful in the theory of dynamical systems, since each class contains all maps sharing the same dynamics from the topological viewpoint.
\par  For maps on the circle, Poincare asked under which condition a given homeomorphism is equivalent (in some sense, e.g. topologically or smoothly) to some rigid rotation and proved that any orientation-preserving homeomorphism without periodic orbits is semi-conjugate to an irrational rotation. Denjoy proved that, when the rotation number is irrational, adding regularity to a given homemorphism $f$ ($\log Df$ is of bounded variation) is enough to guarantee topological conjugacy to a rotation. So for orientation-preserving homeomorphism without periodic orbits, the irrational rotation number is the complete topological invariant.  In 1958, Kolmogorov introduced an isomorphic invariant--entropy into ergodic theory, and gave a formula for the entropy of Bernoulli shifts which showed that all Bernoulli shifts are not isomorphic. In 1969, Ornstein[15] proved that two Bernoulli shifts are isomorphic if they have the same entropy. Entropy is the complete invariants for Bernoulli shifts. The main purpose of this paper is to present the complete topological invariants for expansive Lorenz maps via renormalization and uniform linearization.

\par A {\em Lorenz map} on $I=[0,1]$ is an interval map $f:I \to I$ such
that for some
$c\in (0,1)$ we have\\
\indent
(i)   $f$ is strictly increasing on $[0,c)$ and on $(c,1]$;\\
\indent (ii)  $\lim_{x \uparrow c}f(x)=1$, $\lim_{x \downarrow
	c}f(x)=0$.

If, in addition, $f$ satisfies the topological expansive condition

(iii) The pre-images set $C(f)=\cup_{n \ge 0}f^{-n}(c)$ of $c$ is dense
in $I$,

then $f$ is said to be an {\em expansive Lorenz map} [7-9].

Lorenz maps are one-dimensional maps with a single discontinuity, which
arise as Poincare return maps for flows on branched manifolds that model the strange
attractors of Lorenz systems. Lorenz map plays an important role in the study of the global
dynamics of families of vector fields near homoclinic bifurcations,
see [13],[14],[20],[21],[23] and
references therein.

Renormalization is a central concept in contemporary dynamics. The
idea is to study the small-scale structure of a class of dynamical
systems by means of a renormalization operator $R$ acting on the
systems in this class. A Lorenz map $f:I \to I$ is said to be renormalizable if there
is a proper subinterval $[a,\ b]$ and integers $\ell, r>1$ such that
the map $g: [a,\ b] \to [a,\ b]$ defined by
$$ \label{renormalization}
g(x)=\left \{
\begin{array}{ll}
f^{\ell}(x) & x \in [a,\  c), \\
f^{r}(x) & x \in (c,\  b],
\end{array}
\right.
$$
is itself a Lorenz map. The interval $[a, \ b]$ is called the renormalization interval. If $f$ is not renormalizable, it is said to be {\bf prime}.
\par It was proved in [4] that the minimal renormalization of renormalizable expansive Lorenz map always exists. We can define
a renormalization operator $R$ from the set of renormalizable expansive Lorenz maps to the set of expansive Lorenz maps [8][4].  For each renormalizable expanding Lorenz map, we define $Rf$ to be the minimal renormalization map of
$f$. For $n>1$, $R^{n}f=R(R^{n-1}f)$ if $R^{n-1}f$ is renormalizable. And
$f$ is $m$ ($0 \le m \le \infty$) {\it times renormalizable} if the renormalization process can proceed $m$ times
exactly.  For $0 <i \le m$, $R^{i}f$  is the $i$-th renormalization of $f$. \\

Linear mod one transformations
$T_{\beta,\alpha}(x) = \beta x + \alpha  (\ mod \ 1)$ play an important role in this work. $T_{\beta, \alpha}$ is said to be periodic if both $0$ and $1$ are periodic points of $T_{\beta, \alpha}$. We introduce some notations to identify the parameters.

\[ \Delta=\{(\beta, \alpha): \beta \in [1, 2], \alpha \in [0, 2-\beta]\}.\]
\[ \Delta_{pp}=\{(\beta, \alpha) \in \Delta, T_{\beta, \alpha}\ is \ prime \ and \ periodic\}.\]
\[ \Delta_{pe}=\{(\beta, \alpha) \in \Delta, T_{\beta, \alpha} \ is \ prime \ and \ expansive \}.\]
\[ \Delta_{r}=\{(\beta, \alpha)\in \Delta, T_{\beta, \alpha} \ is \ a \ rational \ rotation \}. \]

We emphasize that $\Delta$ contains all parameters $(\beta, \alpha)$ such that $T_{\beta, \alpha}$ is a Lorenz map. In what follows, when we talk about linear mod one transformation $T_{\beta,\alpha}$, it always means $(\beta, \alpha) \in \Delta $. If $\beta>1$, $T_{\beta, \alpha}$ is expansive. If $\beta=1$, $T_{\beta, \alpha}$ is a rotation $R_{\alpha}(x)=x+\alpha  (\ mod \ 1)$, when $\alpha$ is irrational, $R_{\alpha}$ is expansive, and it is non-expansive if $\alpha$ is rational.
So $T_{\beta, \alpha}$ is an expansive Lorenz map if and only if $(\beta, \alpha) \in \Delta \backslash \Delta_r$. $\Delta_{pp}$ is the set of parameters $(\beta, \alpha)$ so that $T_{\beta, \alpha}$ is prime and periodic. $\Delta_{pe}$ is the set of parameters $(\beta, \alpha)$ so that $T_{\beta, \alpha}$ is prime and expansive.
As we shall see in the main theorem, the piecewise linear Lorenz maps in $\Delta_{pp}$ and $\Delta_{pe}$ are building blocks of all  expansive Lorenz maps.

For $0 \le m \le \infty$, let $\mathcal{L}_m$ be the set of $m$-renormalizable expansive Lorenz maps, and
\[ \mathcal{A}_m=\underbrace{\Delta_{pp}\times \Delta_{pp} \times \cdots \times \Delta_{pp}}_m \times \Delta_{pe}, \ if \ m<\infty; \]
\[ \mathcal{A}_m=\underbrace{\Delta_{pp}\times \Delta_{pp} \times \cdots}_{\infty},   \ if \ m=\infty. \]
In other words, a sequence $A$ belongs to $\mathcal{A}_m$ is of the form $A=(A_1, A_2, \cdots, A_m, A_*)$ when $m<\infty$, and $A=(A_1, A_2, \ldots)$ when $m=\infty$, where $A_i=(\beta_i, \alpha_i) \in \Delta_{pp}$ for $i=1, \cdots, m$, $A_*=(\beta_*, \alpha_*) \in \Delta_{pe}$.

{\bf Main Theorem.} \emph {Fix $0\leq m\leq\infty$. There is a one-to-one correspondence between $\mathcal{L}_m$ and $\mathcal{A}_m$. More precisely, for $f \in \mathcal{L}_m$, there is a sequence $A\in \mathcal{A}_m$ associated with $f$, and for each sequence $B \in \mathcal{A}_m $, there exists an $m$-renormalizable expansive Lorenz map $g$ associated with $B$. Two expansive Lorenz maps are topologically conjugated if and only if they associated with the same sequence. Moreover, the cluster of points in the sequence are complete topological invariants for expansive Lorenz maps.}
\begin {remark}\rm
\
\begin{enumerate} [(1)]
     \item If $f$ is prime expansive Lorenz map with topological entropy $h(f)>0$, according to  a classical result of Parry, $f$ can be uniformly linearaized,  $f$ is conjugate to a linear mod one transformation
     $$ T_{\beta_*, \alpha_*}(x)=\beta_* x + \alpha_*  \ \  \ mod \ \  1 \ \ for \ x \in [0,1],\ 2>\beta>1, \ \alpha\in [0,\ 2-\beta_*],$$
     where $\beta_*=\exp(h(f))$, and $\alpha_*$, as we shall see in Section 4, can be regarded as the generalized rotation number.   So there is a one-to-one corresponding between prime $f$ and $(\beta_*,\alpha_*)$.
     \item If $f$ is $m-$renormalizable, $(\beta_1,\alpha_1), \cdots, (\beta_m, \alpha_m)$ come from the consecutive renormalizations of $f$.
     \item We emphasize that the ordered  pairs $(\beta_1,\alpha_1), \cdots, (\beta_m, \alpha_m), (\beta_*, \alpha_*)$ are complete topological invariants of expansive $m-$renormalizable Lorenz maps, and the order of the pairs can not be changed. In fact, if the ordered pairs associated with two 2-renormalizable expansive Lorenz maps $f$ and $g$  are  $(1, 0.3), (1, 0.4),(1.3, 0.5) $ and $(1, 0.4), (1, 0.3),(1.3, 0.5)$, respectively, then $f$ and $g$ are not topologically conjugated.
     \item The ordered pairs can not only be used to check whether two expansive Lorenz maps are topologically conjugate or not, but also be used to describe the distance between two expansive Lorenz maps which are not conjugate. See Section 4 for details.

     \item It is an structural theorem for expansive Lorenz maps. Let $f$ be an $m$-renormalizable expansive Lorenz map $ (0\leq m\leq\infty)$, then $f$ is uniformly linearizable if and only if $m$ is finite and $\beta_{i}=1$ for $i=1,2\ldots m$. Moreover, $h_{top}f=0$ if and only if $\beta_{i}=1$ for all $i$.
\end{enumerate}
\end{remark}
\par The remain parts of the paper are organized as follows: In Section 2, we consider the renormaliation of expansive Lorenz maps, using topological and combinatorial approaches, and prove the Factorization theorem. In Section 3, we show how a prime Lorenz map corresponds to a point in $\Delta \backslash \Delta_r$, and provide the formulae to calculate $\beta$ and $\alpha$. In Section 4, we show that an expansive Lorenz map can be associated with a cluster of points in $\Delta$, which is the completed topological invariants of expansive Lorenz maps. A combinatorial metric is defined on the space of expansive Lorenz maps by the cluster of points.  In the last Section, we characterize some special classes of expansive Lorenz maps via the cluster of points.
\section{Renormalization }

The idea of renormalization for Lorenz map was
introduced in studying simplified models of Lorenz attractor,
apparently firstly in Plamer [16] and Parry [18] (cf.
[5]). The renormalization operator in Lorenz map family, is
the first return map of the original map to a smaller interval
around the discontinuity, rescaled to the original size. Glendinning
and Sparrow [8] presented a comprehensive study of the
renormalization by investigating the kneading invariants of
expanding Lorenz map. Ding proposed a topological approach by using completely invariant closed set to investigate the renormalization of expansive Lorenz maps [4].

\subsection{Topological approach}
We briefly introduce the topological approach for renormalization of expansive Lorenz map developed in [4]. Recall that a subset $E$ of $I$ is completely invariant under $f$ if
$f(E) = f^{-1}(E) = E$, and it is proper if $E \neq I$.

\begin{lemma}{\bf([4, Theorem A])}.\label{renormalization} {\it Let $f$ be an expansive Lorenz map. There is
a one-to-one correspondence between the renormalizations and proper
completely invariant closed sets of $f$. More precisely,  suppose $E$ is a proper completely invariant closed set of $f$, put
$$e_- = \sup{x \in E, x < c}, \ \ \ \ e_+ = \inf{x \in E, x > c}$$
and
$$\ell= N((e_- , c)),\ \ \ \ \  r = N((c, e_+ )).$$
Then
$f^{\ell}(e_-)=e_-,\  f^r(e_+)=e_+$,
and the following map $R_Ef$ is a renormalization  of $f$
\begin{equation} 
R_Ef(x)=\left \{ \begin{array}{ll}
f^{\ell}(x) & x \in [f^{r}(c_+),\   c) \\
f^r(x) & x \in (c, \ f^{\ell}(c_-) ]
\end{array}
\right.
\end{equation}
On the other hand, if $g$ is a renormalization of $f$, then there exists a unique
proper completely invariant closed set $B$ such that $R_B f = g$.}
\end{lemma}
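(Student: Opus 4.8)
The plan is to establish the two directions of the correspondence separately, using throughout that for a proper completely invariant closed set $E$ the complement $I\setminus E$ is open and completely invariant, so its connected components --- the ``gaps'' --- are permuted by $f$, with the gap $G_0$ containing $c$ distinguished. Two preliminary facts carry most of the weight. First, $c\notin E$: otherwise $f^{-1}(E)=E$ forces $C(f)=\bigcup_{n\ge 0}f^{-n}(c)\subseteq E$, and density of $C(f)$ together with closedness of $E$ then gives $E=I$, contrary to properness. Hence $c$ lies in a gap $G_0=(e_-,e_+)$, with $e_\pm\in E$ because $E$ is closed; a short argument using complete invariance and the fact that $f$ maps $[0,c)$ onto $[f(0),1)$ and $(c,1]$ onto $(0,f(1)]$ shows $E$ meets both sides of $c$, so $e_-<c<e_+$. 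Second, \emph{every} gap $G$ eventually falls into $G_0$: by expansivity $c\in f^m(G)$ for some $m\ge 0$, and for the least such $m$ the set $f^m(G)$ is an interval contained in the completely invariant open set $I\setminus E$ and containing $c$, hence $f^m(G)\subseteq G_0$; together with $f^{-n}(G_0)\subseteq I\setminus E$ this gives $I\setminus E=\bigcup_{n\ge 0}f^{-n}(G_0)$. In particular the forward orbits of $e_-$ and $e_+$ stay in $E$ and never meet $G_0$.

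For the direction ``$E\mapsto$ renormalization'', put $\ell=N((e_-,c))$, the first return time of $(e_-,c)$ to $G_0$, so that $f^i((e_-,c))$ misses $G_0$ --- hence misses $c$ --- for $1\le i<\ell$; thus $f^\ell$ is strictly increasing and continuous on $(e_-,c)$, with $f^\ell((e_-,c))=(f^\ell(e_-),f^\ell(c_-))$. The left endpoint $f^\ell(e_-)$ lies in $E$, hence outside the open gap $G_0$; it cannot be $\ge e_+$ (the image would then miss $G_0$), so $f^\ell(e_-)\le e_-$; and if $f^\ell(e_-)<e_-$ then $e_-$ itself would be an interior value of $f^\ell$ on the gap $(e_-,c)$, i.e.\ $e_-=f^\ell(y)$ with $y\notin E$, contradicting $f^{-\ell}(E)=E$. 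Hence $f^\ell(e_-)=e_-$, and the mirror argument with $r=N((c,e_+))$ gives $f^r(e_+)=e_+$. A more careful tracking of the forward orbits of the two half-gaps $(e_-,c)$ and $(c,e_+)$ --- in which expansivity is used to prevent these orbits from stabilizing before they return to $G_0$ --- then locates the endpoints $f^r(c_+)$ and $f^\ell(c_-)$ of the renormalization interval strictly inside $[e_-,e_+]$ and shows that the displayed map $R_Ef$ is strictly increasing on each side of $c$, has the required one-sided limits at $c$, and carries its interval onto itself; thus $R_Ef$ is a Lorenz map. It is expansive: since $c$ is interior to the renormalization interval, any $x\in C(f)$ lying there first hits $c$ at one of the return times of its orbit, so $x$ is a preimage of $c$ under an iterate of $R_Ef$, and such $x$ are dense because $C(f)$ is dense in $I$. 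Hence $R_Ef$ is a renormalization of $f$.

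For the converse one must produce, from a renormalization $g$ with interval $J\ni c$ and return times $\ell,r$, a completely invariant closed set $B$ with $R_Bf=g$. Here the set is $B:=I\setminus\bigcup_{n\ge 0}f^{-n}(G_0)$, where $G_0\supseteq\overline J$ is the $c$-gap associated with $g$ --- the open interval about $c$ bounded by the $f$-periodic points to which the two branches of $g$ lead when iterated monotonically outwards past the endpoints of $J$ --- so that $G_0$, and with it $B$, is determined by $g$. (Note $G_0$ is in general strictly larger than $J^\circ$: the points $f^\ell(c_-)$ and $f^r(c_+)$, which are the endpoints of $J$, lie \emph{inside} $G_0$.) One verifies $B$ is closed and proper, and that it is completely invariant precisely because $g$ is the \emph{first return} map of $f$ to $J$: every point of $G_0\setminus\{c\}$ returns to $G_0$ (and even $c$ does, landing at an endpoint of $J$), which is the inclusion $G_0\subseteq\bigcup_{n\ge 1}f^{-n}(G_0)$ needed for $f^{-1}(B)=B$. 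One then checks that $B$ reproduces $J,\ell,r$, i.e.\ $R_Bf=g$. Uniqueness is immediate from the preliminary facts: any proper completely invariant closed $E'$ with $R_{E'}f=g$ has $c$-gap equal to $G_0$, hence $I\setminus E'=\bigcup_n f^{-n}(G_0)$, so $E'=B$.

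The set-theoretic manipulations above are routine; the real work lies in the direction ``$E\mapsto$ renormalization'', in following the forward orbits of the two half-gaps $(e_-,c)$ and $(c,e_+)$ closely enough to pin down the renormalization interval and the one-sided limits of $R_Ef$ at $c$, and then to verify all three Lorenz axioms for $R_Ef$, in particular that it remains expansive. The discontinuity at $c$ is a persistent nuisance --- it is cleanest to pass to the interval obtained by splitting $c$ into the two points $c_-$ and $c_+$, on which $f$ is continuous --- and it is exactly this that makes the properties ``closed'' and ``completely invariant'' of $B$ in the third step require some care.
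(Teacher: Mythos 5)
Your skeleton is the right one, and it is essentially the interval-language version of what this paper does for the symbolic analogue (the paper itself only quotes this lemma from [4] and proves the combinatorial counterpart in the Appendix): locate the gap $G_0=(e_-,e_+)$ of $E$ containing $c$, prove $f^{\ell}(e_-)=e_-$ and $f^{r}(e_+)=e_+$ by the maximality/complete-invariance argument, decompose $I\setminus E=\bigcup_{n\ge 0}f^{-n}(G_0)$, and for the converse take the set of points whose orbits avoid the renormalization gap. Your argument for $f^{\ell}(e_-)=e_-$ (ruling out $>e_-$ by the gap structure and $<e_-$ by producing a preimage of $e_-$ outside $E$) is correct and matches the Appendix proof. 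But the proposal has genuine gaps. First, the preliminary claim that ``a short argument shows $E$ meets both sides of $c$'' is false as stated: if $f(0)=0$ then $E=\{0\}$ is a proper completely invariant closed set with no point above $c$, so $e_+$ does not exist; such degenerate sets correspond exactly to the trivial pairs $(w_+,w_-)=(10,0)$ or $(1,01)$ with $\ell=1$ or $r=1$ that the paper's definition of renormalization excludes ($\ell,r>1$), and the stated one-to-one correspondence requires these cases to be excluded or treated, which you do not do. Second, the decisive content of the forward direction --- that the displayed map is a Lorenz map of $[f^{r}(c_+),f^{\ell}(c_-)]$ --- is only asserted (``a more careful tracking \ldots shows''). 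One must actually prove: $f^{r}(c_+)<c<f^{\ell}(c_-)$; that the return time to $G_0$ is constant, equal to $\ell$ on $(e_-,c)$ and $r$ on $(c,e_+)$; and the forward invariance, which reduces to $f^{\ell}(f^{r}(c_+))\ge f^{r}(c_+)$ and $f^{r}(f^{\ell}(c_-))\le f^{\ell}(c_-)$. All of these follow from density of $C(f)$ (e.g.\ $f^{\ell}(c_-)\le c$, or a fixed point of $f^{\ell}$ in the open interval $(e_-,c)$, would trap an open subinterval of $G_0$ whose orbit never meets $c$), which is exactly the tool you name, but you never run the argument, and this is where the lemma lives.

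In the converse direction there are two further soft spots. You invoke that ``$g$ is the first return map of $f$ to $J$''; the paper's definition of a renormalization does not say this, so either it must be proved or the complete invariance of $B$ must be obtained without it. And you verify only the backward half $f^{-1}(B)=B$ (via $G_0\subseteq\bigcup_{n\ge1}f^{-n}(G_0)$); the forward half $f(B)=B$ requires showing every point of $B$ has an $f$-preimage in $B$, which is not automatic because $f$ need not be surjective and a point may a priori have all its preimages inside $G_0$ --- this is not discussed at all. Finally, your construction of $G_0$ from $g$ (``the periodic points to which the branches of $g$ lead when iterated outwards'') and the uniqueness claim both rest on the no-interior-fixed-point fact from the previous paragraph, so they cannot be called ``immediate'' before that fact is established. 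The approach would work once these steps are filled in; as written, the proof of the main assertion (that $R_Ef$ is a renormalization and that $B$ is completely invariant with $R_Bf=g$) is an outline rather than a proof.
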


According to the Lemma 1, a possible way to characterize the renormalizability is to look
for the minimal completely invariant closed set $D$ of $f$, in the sense that $D\subseteq E$
for each completely invariant closed set $E$ of $f$.  If we can find the minimal completely invariant closed set, then
$f$ is renormalizable if and only if $D \neq I$.

\begin{lemma}{\bf([4, Theorem B])}.\label{renormalization} Let $f$ be an expansive Lorenz map with
minimal period $\kappa(1<\kappa<\infty)$.  Then we have the following:
\begin{enumerate}

\item $f$ admits a unique $\kappa$-periodic orbit $O$.
\item $D=\overline{\bigcup_{n\ge 0}f^{-n}(O)}$ is the minimal completely invariant closed set of $f$.
\item $f$ is renormalizable if and only if $D \neq I$. If $f$ is
renormalizable, then $R_D$, the renormalization associated to $D$,
is the minimal renormalization of $f$.
\item The following {\bf trichotomy holds: i) $D=I$, ii)$ D=O $, iii) $D$ is a Cantor set.}
\end{enumerate}
\end{lemma}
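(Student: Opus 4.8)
The plan is to recast the statements in terms of itineraries and then apply Lemma~1. For $x\in I$ let $\theta(x)\in\{0,1\}^{\mathbb N}$ record whether each $f^n(x)$ lies in $[0,c)$ (symbol $0$) or in $(c,1]$ (symbol $1$); expansiveness (density of $C(f)$) is exactly what makes $\theta$ strictly monotone for the lexicographic order and injective on $I\setminus C(f)$, so that periodic orbits of period $n$ correspond bijectively, up to cyclic rotation, to admissible primitive words of length $n$, where ``admissible'' means every cyclic shift lies between the kneading sequences $k_-=\theta(c_-)$ and $k_+=\theta(c_+)$. Two preliminary remarks. (a) Every periodic orbit of period $>1$ meets both $[0,c)$ and $(c,1]$, since $f$ is strictly increasing on each and an orbit confined to one of them would be a single fixed point; hence a period-$\kappa$ orbit $O$ has well-defined innermost points $e_-(O)=\max(O\cap[0,c))<c<\min(O\cap(c,1])=e_+(O)$. (b) A proper completely invariant closed set $E$ misses $C(f)$ --- if some $y\in E$ had $f^n(y)=c$ then $c\in f^n(E)=E$, forcing $E=I$ by expansiveness --- so $E$ is nowhere dense; moreover $E$ cannot lie on one side of $c$, since a one-sided completely invariant closed set would force $C(f)$ to be a convergent sequence, contradicting density. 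Thus $c$ straddles $E$, and Lemma~1 applies and supplies periodic innermost points $e_\pm(E)$.

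For part~(1) I would show that an admissible primitive word of the minimal length $\kappa$ is unique up to rotation: two distinct such cyclic classes, compared with $k_\pm$ in the lexicographic order, can be spliced into an admissible periodic word of strictly smaller period, contradicting minimality of $\kappa$ --- equivalently, the extremal admissible period-$\kappa$ word is already pinned down by $k_\pm$. Injectivity of $\theta$ off $C(f)$ then transfers uniqueness of the word to uniqueness of the orbit $O$ in $I$.

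For parts~(2) and (3), set $D=\overline{\bigcup_{n\ge0}f^{-n}(O)}$. Closedness is automatic; since $f(O)=O$ and $f$ is a homeomorphism on each branch, $\bigcup_n f^{-n}(O)$ is both forward and backward invariant, and these properties survive the closure (the discontinuity at $c$ causes no trouble because $C(f)$ accumulates on both sides), so $D$ is completely invariant and closed. For minimality, let $E$ be any proper completely invariant closed set; by (b), $e_\pm(E)$ are periodic of period $\ge\kappa$, and the combinatorial content of part~(1) forces the period-$\kappa$ cyclic word of $O$ to occur within the admissibility constraints imposed by $e_\pm(E)$, whence $O\subseteq E$; then $\bigcup_n f^{-n}(O)\subseteq\bigcup_n f^{-n}(E)=E$, so $D\subseteq\overline E=E$. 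Hence $D$ is the minimal completely invariant closed set, and Lemma~1 gives at once that $f$ is renormalizable iff $D\ne I$; moreover $D\subseteq E$ forces $e_-(D)\le e_-(E)$ and $e_+(D)\ge e_+(E)$ for all such $E$, so the renormalization interval of $R_D$ contains that of every other renormalization, i.e.\ $R_D$ is the minimal one.

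For the trichotomy, suppose $D\ne I$; then $D$ is a proper completely invariant closed set, hence nowhere dense by (b). If $D$ is moreover finite, then $f|_D$ is a bijection of a finite set, hence a union of cycles; complete invariance makes each cycle completely invariant and closed, so minimality forces $D$ to be a single periodic orbit, which by $O\subseteq D$ is $O$. If $D$ is infinite, one shows it has no isolated points --- an isolated point of $D$ lies in $\bigcup_n f^{-n}(O)$ and, pushed forward by the relevant branch homeomorphism, makes a point of $O$ isolated in $D$, which (via complete invariance, minimality, and expansiveness, as in the interior case) is impossible --- so $D$ is a perfect, nowhere dense, compact set, i.e.\ a Cantor set; together with the possibility $D=I$ this is the trichotomy. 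The genuinely hard step is the minimality in part~(2), namely that the period-$\kappa$ orbit $O$ lies in \emph{every} completely invariant closed set: this is the one place where minimality of $\kappa$ enters essentially, and I would prove it combinatorially --- reading ``$E$ is completely invariant'' as constraints on the periodic itineraries of $e_\pm(E)$ and showing these cannot exclude the period-$\kappa$ cyclic word of $O$ --- rather than by a direct geometric argument on $I$; granting this, everything else (closedness and complete invariance of $D$, the two consequences via Lemma~1, and the interior/nowhere-dense and finite/perfect alternatives) follows routinely.
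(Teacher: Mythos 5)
A preliminary remark: the paper itself does not prove this lemma --- it is quoted as Theorem B of reference [4], and the only proof supplied in the paper (the appendix) concerns the different, symbolic-space statement about completely invariant closed subsets of $\Omega_{k_+,k_-}$. So your argument must stand on its own, and its architecture is indeed the right one: everything reduces to (i) uniqueness of the $\kappa$-periodic orbit and (ii) the containment $O\subseteq E$ for every proper completely invariant closed set $E$, after which part (3) is immediate from Lemma~1 and the trichotomy follows from the finite-versus-perfect dichotomy for the minimal set. Those routine parts of your write-up are essentially correct (one local flaw: your reason that a proper completely invariant closed set cannot lie on one side of $c$ --- ``would force $C(f)$ to be a convergent sequence'' --- is not an argument; the correct one is that a one-sided such set is forward invariant under a single increasing branch, so the orbit of its supremum is monotone and converges to a fixed point inside the set, contradicting $\kappa>1$).

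The genuine gap is that (i) and (ii) are precisely the steps you do not prove. For (i) you assert that two distinct admissible primitive length-$\kappa$ cyclic words ``can be spliced into an admissible periodic word of strictly smaller period''; this interpolation claim is the entire content of part (1) and is not one line (it is true, but requires the Farey/rotation-ordering structure of cyclic words, or the geometric argument comparing the innermost points of the two orbits). For (ii) you write that part (1) ``forces'' $O\subseteq E$ and then concede in your final paragraph that this is the hard step you would prove later. A complete route you could insert: if $O\not\subseteq E$, forward invariance of $O$ puts some $x\in O$ in the gap $(e_-,e_+)$, say $x\in(e_-,c)$; the first-return time of $(e_-,c)$ to the gap is exactly $\ell$ and that of $(c,e_+)$ is exactly $r$, so $\kappa=n_1\ell+n_2 r$ with $n_1+n_2\ge1$; on the other hand $f^\ell(e_-)=e_-$ and $f^r(e_+)=e_+$ give $\kappa\le\min(\ell,r)$, forcing $n_1+n_2=1$ and $\kappa=\ell$ or $\kappa=r$; then $x$ and the corresponding endpoint $e_\mp$ lie on two \emph{distinct} $\kappa$-periodic orbits (one meets $E$, the other does not), contradicting part (1). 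As submitted, the proposal verifies only the routine consequences and leaves the two claims that carry the theorem unproved.
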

It is easy to see the cases $\kappa=1$  and $\kappa=\infty$ are prime, Theorem B describes the
renormalizability of expanding Lorenz map completely.

According to Lemma 2, a renormalizable expanding Lorenz map admits a
unique minimal renormalization. We can define a renormalization operator $R$ from
the set of renormalizable expansive Lorenz maps to the set of expandsive Lorenz maps.
 For each renormalizable expanding Lorenz map, we define $Rf$
to be the minimal renormalization map of $f$. For $n>1$, $R^n f = R(R^{n-1} f)$ if
$R^{n-1}f$ is renormalizable. And $f$ is $m (0\le m \le \infty)$ times renormalizable if the
renormalization process can proceed $m$ times exactly. For $0< i \le m$, $R^i f$ is the
$ith$ renormalization of $f$.

The minimal renormalization is said to be {\it periodic} if the minimal completely invariant closed set $D=O$, where
$O$ is the periodic orbit with minimal period of $f$. And the $ith$ renormalization $R^i f$
is periodic if it is a periodic renormalization of $R^{i-1} f$.  The periodic renormalization is interesting because the linear mod one transformation
$T_{\beta,\alpha}(x) = \beta x + \alpha  (\ mod \ 1), \ \  \beta\in [1, \ 2],\  \alpha \in [0,\ 2-\beta].$
can only be periodically renormalized (see [6], [3]) . This
kind of renormalization was studied by Alsed$\grave{a}$ and $Falc\grave{o}$, Malkin. It was
called phase locking renormalization by Alsed$\grave{a}$ and $Falc\grave{o}$  because it appears
naturally in Lorenz map whose rotational interval degenerates to a rational point.

\subsection{Combinatorial approach}

Now we try to translate the results in previous subsection to combinatorial approach, which is a little bit different from the approach of Glendinning and Sparrow.

Let $f$ be an expansive Lorenz map. The trajectories of points in $I$ by $f$ can be coded by elements of $\Omega=\{0,\ 1\}^{\infty}$,
which denotes the set of infinite words( or sequences) ${\bold a}=a_1a_2\cdots a_n\cdots$ on the alphabet $\{0,\ 1\}$.
The lexicographic order on $\Omega$ is the total order defined $a<b$ if $a\neq b$ and $a_n<b_n$, where $n$ is the least index such that $a_n\neq b_n$.

 The kneading
sequence of a point $x$, $k_f(x)$, is defined to be $\epsilon_0
\epsilon_1 \ldots$ of $0's$ and $1's$ as follows:
$$ \epsilon_i=0\ \ \ \ \ \ if\ \ \ f^i(x)<c \ \ \ \ and \ \ \
\epsilon_i=1\ \ \ \ \ \ if\ \ \ f^i(x)>c.$$
This definition works for $x \notin C(f)=\cup_{n \ge 0}f^{-n}(c)$. In the case where $x$ is
a preimage of $c$, $x$ has upper and lower kneading sequences
$$ k_{f,+}(x)=\lim_{y\downarrow x}k_f(y), \ \ \ \ \ \ \ \ \
k_{f,-}(x)=\lim_{y\uparrow x}k_f(y),$$ where the $y's$ run through
points of $I$ which are not the preimages of $c$. In particular,
$k_+=k(c+)=1k(0)$ and $k_-=k(c-)=0k(1)$ are called the kneading
invariants of $f$, which were used to developing the renormalization
theory of expansive Lorenz map by Glendinning and Sparrow [8].
The kneading space of $f$ is
$$K(f)=\{k_f(x): x \in I\}.$$
Let $\sigma$ be the shift map, operating on the kneading space
$K(f)$, then clearly $k_{f}(f(x))=\sigma(k_f(x)))$, with similar
results holding for the upper and lower kneading sequences of points
$x$ which are pre-images of $c$. The dynamics of $f$ on
$I$ can be modeled by the shift map $\sigma$ on kneading space.

\begin{lemma}{\bf([12, Theorem 1])}.\label{kneading space}
If $f$ is a topologically expansive Lorenz map, then the kneading invariant $K(f)=(k_{+},k_{-})$, satisfies
	\begin{equation} \label{expanding}
		\sigma (k_{+})\le \sigma^n(k_{+})<\sigma (k_{-}), \ \ \ \ \ \ \sigma (k_{+})< \sigma^n(k_{-})\le\sigma (k_{-}) \ \ \ \  n \ge 0,
	\end{equation}
 Conversely, given any two sequences $k_{+}$ and $k_{-}$ satisfying (2.2), there exists an expansive Lorenz map $f$ with $(k_{+},k_{-})$ as its kneading invariant, and $f$ is unique up to conjugacy by a homeomorphism of $I$.
\end{lemma}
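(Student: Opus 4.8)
For the \emph{necessity} of (2.2): the first step is that the itinerary map $x\mapsto k_f(x)$ is non-decreasing for the lexicographic order. Indeed, if $x<y$ and their orbits ever land on opposite sides of $c$, then at the first such index $n$ the orbit of $x$ lies to the left (each branch of $f$, hence of $f^n$, is increasing, so $f^j(x)<f^j(y)$ is maintained up to step $n$), giving $(k_f(x))_n=0<1=(k_f(y))_n$ while earlier symbols agree; otherwise $k_f(x)=k_f(y)$. The one-sided sequences $k_{f,\pm}$ inherit this monotonicity as limits. Using $f(c^+)=0$ and $f(c^-)=1$, for $n\ge 1$ the sequence $\sigma^n(k_+)$ is an itinerary of $f^{\,n-1}(0)\in I$ and $\sigma^n(k_-)$ of $f^{\,n-1}(1)\in I$, while $\sigma^0(k_\pm)=k_\pm$; since $0=\min I$ and $1=\max I$, monotonicity squeezes all of these between $k_f(0)=\sigma(k_+)$ and $k_f(1)=\sigma(k_-)$, which is (2.2) with weak inequalities. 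The strict signs come from expansiveness: density of $C(f)$ forces any two distinct points to be separated by a preimage of $c$, hence to have distinct one-sided itineraries, so no point other than $0$ (resp.\ $1$) realises $\sigma(k_+)$ (resp.\ $\sigma(k_-)$); tracking the leading symbol converts the weak inequalities into the asserted strict ones.

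For the \emph{existence} of $f$: given $k_+,k_-$ obeying (2.2) (so $k_+$ starts with $1$, $k_-$ with $0$), form the symbolic space
$$\Sigma=\bigl\{\,a\in\{0,1\}^{\infty}\ :\ \sigma^{n}(a)\in[\sigma(k_+),\,k_-]\cup[k_+,\,\sigma(k_-)]\ \mbox{ for all }n\ge 0\,\bigr\},$$
the intervals taken for the lexicographic order; the symbol $a_n$ automatically selects which interval $\sigma^n(a)$ falls in. One verifies that $\Sigma$ is closed, shift-invariant, contains $k_+$ and $k_-$, and, with the order topology, is a compact linearly ordered space with a least and a greatest element whose only order-jumps are the pairs $\{wk_-,\,wk_+\}$ over those finite words $w$ with both members in $\Sigma$. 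Collapsing each such jump to a point yields a compact, connected, second-countable, linearly ordered space $\widehat\Sigma$ with distinct endpoints, i.e.\ a space homeomorphic to $[0,1]$. The shift descends to a map $f$ on $\widehat\Sigma$ that is strictly increasing on each of the two intervals separated by the class $c:=\{k_-,k_+\}$, whose image on each is a full subinterval (hence $f$ is continuous there), and with $f(c^-)=1$, $f(c^+)=0$ because $\sigma(k_-)$ and $\sigma(k_+)$ are the greatest and least points of $\widehat\Sigma$ and are not identified. So $f$ is a Lorenz map with $k_{f,\pm}(c)=k_\pm$ by construction.

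For \emph{expansiveness} and \emph{uniqueness}: if some nondegenerate interval $J\subseteq\widehat\Sigma$ avoided $C(f)$, then $c\notin f^{n}(J)$ for all $n$, so each $f^{n}|_{J}$ would be monotone with $f^{n}(J)$ entirely on one side of $c$, whence all points of $J$ would carry the same itinerary; but distinct points of $\widehat\Sigma$ have distinct one-sided itineraries, so $J$ is a point -- a contradiction. Thus $C(f)$ is dense and $f$ is an expansive Lorenz map. For uniqueness, if $f$ and $g$ are expansive Lorenz maps with the same kneading invariant, then by the monotonicity and injectivity of itinerary maps established above they have the same itinerary set, and the rule ``send a point with given $f$-itinerary to the point of $I$ with the same $g$-itinerary'' (using one-sided itineraries on $C(f)$, $C(g)$) defines a strictly order-preserving bijection $h:I\to I$; density of $C(f)$ and $C(g)$ upgrades $h$ to a homeomorphism, and $h\circ f=g\circ h$ since both sides act as the shift on itineraries.

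The main obstacle is the existence part, specifically extracting the two topological facts from the purely combinatorial hypothesis (2.2): that the only order-jumps of $\Sigma$ are the cut-preimage pairs $\{wk_-,wk_+\}$ (so that $\widehat\Sigma$ is a genuine arc and $\sigma$ descends to it), and that the resulting Lorenz map has dense $C(f)$, i.e.\ admits no wandering interval. Handling the two itineraries attached to each preimage of $c$ so that the quotient neither loses nor creates points is the delicate bookkeeping here; by contrast, necessity and uniqueness follow routinely once monotonicity of the itinerary map is in hand.
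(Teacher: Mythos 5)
The paper does not actually prove this lemma: it is quoted as a known classification theorem (the content is Hubbard--Sparrow's Theorem, reference [9], although the citation key printed is [12]), and the appendix proof concerns the later lemma on completely invariant closed sets, not this one. So there is no in-paper argument to compare against; measured against the standard literature proof, your plan is essentially that proof and is correctly structured. The three pillars --- monotonicity of the itinerary map for necessity, the order-topology quotient of the admissible subshift $\Sigma$ for existence, and itinerary-matching plus density of $C(f)$ for uniqueness --- are the right ones, and you honestly flag the two genuinely delicate points (that the only order-jumps of $\Sigma$ are the cut-preimage pairs $\{wk_-,wk_+\}$, and the no-wandering-interval argument giving density of $C(f)$). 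Two small items you should make explicit when writing this up. First, in the necessity direction the strict inequality $\sigma^n(k_+)<\sigma(k_-)$ has a boundary case your ``tracking the leading symbol'' remark does not cover: if $f^{n-1}(0)$ were equal to $1$, the upper itinerary of that point would coincide with the lower itinerary of $1$ and strictness would fail. This case must be excluded, and it is, by an elementary monotonicity argument: $f(y)=1$ forces $y=1$ on the right branch (and is unattained on the left branch), so the orbit of $0$ can reach $1$ only if $f(0)=1$, which contradicts $f$ being strictly increasing on $[0,c)$ with supremum $1$ at $c$. Second, in the existence part you should record why the shift descends to the quotient: $\sigma$ sends the identified pair $\{wk_-,wk_+\}$ to the identified pair $\{w'k_-,w'k_+\}$ whenever $w$ is nonempty, while for $w$ empty it sends the class $c=\{k_-,k_+\}$ to the two unidentified extreme points $\sigma(k_-)$ and $\sigma(k_+)$ --- this is precisely what produces the required discontinuity with $f(c^-)=1$, $f(c^+)=0$. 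With those points filled in, the plan carries through.
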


\par In  Lemma 3, we know the one to one
correspondence between the pair $(k_{+},k_{-})$ and the
kneading space of Lorenz map. Two expansive Lorenz maps are said to be
 equivalent, if they admit the same kneading invariant. If
$K(f)=\Omega_{k_{+},k_{-}}$, we call $\Omega_{k_{+},k_{-}}$ the
kneading space of $f$. When we do not want to mention $f$, we call
$\Omega_{k_{+},k_{-}}$ a kneading space. Since the coding of the trajectories of expansive $f$ is a homepmorphism which preserves the lexicographic order, we know that $K(f)=\Omega_{k_{+},k_{-}}=\{{\bold a} \in \Omega: \sigma(k_+) \leq \sigma^n {\bold a} \leq \sigma(k_-)\}$.

\subsection{Renormalization of expansive Lorenz maps}

\noindent  Now we consider the renormalization of Lorenz map in combinatorial
way. The following definition is essentially from Glendinning and Sparrow [8].

\begin{definition} \label{com-renormal}
\rm We say that the kneading invariant $K=(k_+, k_-)$ is renormalizable if there exists finite, non-empty words $(w_+,\ w_-)$, such that
\begin{equation} \label{com-renormal1}
\left \{ \begin{array}{ll}
k_{+}=1k(0) =&w_+ w_-^{p_1} w_+^{p_2} \cdots,\\
k_{-}=0k(1) =&w_- w_+^{m_1} w_-^{m_2} \cdots,
\end{array}
\right.
\end{equation}
where $w_+=1\cdots$, $w_-=0\cdots$ and the lengths $|w_+|>1$
and $|w_-|> 1$, and $n_1, m_1>0$. The kneading invariant of the renormalization is $RK=(k_+^1, k_-^1)$, where
\begin{equation} \label{com-renormal2}
\left \{ \begin{array}{ll}
k_{+}^{1}=1k(0)_{1}=&1 0^{p_1} 1^{p_2} \cdots,\\
k_{-}^{1}=0k(1)_{1}= & 0 1^{m_1} 0^{m_2} \cdots.
\end{array}
\right.
\end{equation}
\end{definition}

\vspace{1cm}
It is obvious that Definition [1] is equivalent to the Definition 1, Lorenz map $f$ is renormalizable is equivalent to its kneading invariant
$K(f)=(k_{+},k_{-})$ is renormalizable. If $K=(k_+, k_-)$ is not renormalizable it is said to be prime.  Note that we do not involve $(w_+, w_-)=(1, 01)$ and $(w_+, w_-)=(10, 0)$ in the definition of renormalization as in [8]. The cases $(w_+, w_-)=(1, 01)$ and $(w_+, w_-)=(10, 0)$  correspond to trivial renormalizations.

If $K$ is renormalizable, then by the substitution $(w_+, w_-)\mapsto (1, 0)$ we obtain a new kneading invariant denoted by $RK$. To describe the renormalization more concisely, we use $*$-product, which is introduced for kneading invariant of Lorenz map in 1990s [19].
Let $W=(w_+^{\infty}, w_-^{\infty})$ be an admissible periodic kneading invariant, and $K^1=(k_+^1, k_-^1)$ be an admissible kneading invariant.
The $*$-product of $W$ and $K$ is defined to be a new kneading invariant pair $K=(k_+, k_-):=W*RK^1$
\begin{equation}\label{product}
(k_{+},k_{-})=(w_+,\ w_-)*(k_{+}^1,k_{-}^1)
\end{equation}
where $(k_{+},k_{-})$ is the pair of sequence obtained by
replacing $1$'s by $w_+$, replacing $0$'s by $w_-$ in $k_{+}^1$ and $k_{-}^1$.

Using $*$-product, (2.3) and (2.4) can be expressed by $K=W*RK$. So a kneading invariant is renormalizable if and only if it can be decomposed as the $*$-product of an admissible periodic kneading invariant and another admissible kneading invariant. If $K$ is prime, then it can not be decomposed as the $*$-product of an admissible periodic kneading invariant and another admissible kneading invariant. One can check that the $*$-product satisfies the associative law, but does not satisfy the commutative law in general.

It is known that if both $W=(w_+, w_-)$ and $K=(k_+, k_-)$ are admissible, then the $*$-product $W*K$ is admissible. Suppose $W_i=(w_{i,+}, w_{i,-}), i=1, \ldots, m$ are admissible periodic kneading invariant, one can define the $*$-product: $W_1*W_2*\cdots *W_m*K$, which is admissible if $K$ is also admissible. So we can obtain different kinds of kneading invariants by $*$-product. An interesting question is whether a kneading invariant can be decomposed as the  $*$-product of prime kneading invariant.

\begin{remark}\rm

\
\begin{enumerate}[(1)]
\justifying
\item $(k_{+}^{1}, k_{-}^{1})$ is obtained by replacing $w_+$ and $w_-$ by $0$ and $1$ in $(k_{+},k_{-})$. So if $\Sigma_{k_{+},k_{-}}$  is renormalizable with finite words $(w_+,w_-)$, one can
define a map $R_{w_+, w_-}(k_{+},k_{-})=(k_{+}^{1}, k_{-}^{1})$.
\item If $\Sigma_{k_{+},k_{-}}$  is prime, then
$(\Sigma_{k_{+},k_{-}},\sigma)$  is {\it l.e.o.} in the sense
that $\bigcup_{i =0}^n \sigma^i(U)=\sum_{k_{+},k_{-} }$ for every
cylinder set $U \subset \Sigma_{k_{+},k_{-}}$.

\end{enumerate}

\end{remark}

A subset $E$ in  $\Omega_{k_{+},k_{-}}$ is said to be completely invariant with respect to
$(\Omega_{k_{+},k_{-}},\ \sigma)$, if
\begin{equation} \label{total invariant}
 \sigma(E)=\sigma^{-1}(E)\cap \Omega_{k_{+},k_{-}}=E.
\end{equation}
We also call $E$ a completely invariant subset of $\Omega_{k_{+},k_{-}}$ and it is proper if $\emptyset \neq E\neq \Omega_{k_{+},k_{-}}$.
In other words, $E$ is a  completely invariant subset of $\Omega_{k_{+},k_{-}}$ if and only if $\forall {\bold a} \in E, {\bold b} \in \Omega_{k_{+},k_{-}}$ such that $\sigma {\bold b}={\bold a}$, we have $\sigma {\bold a} \in E$ and ${\bold b} \in E$.

The following Lemmas 1 follow from Lemma 1 and Lemma 2.  We provide a combinatorial proof of Lemma 3 in Appendix.

\begin{lemma}

Suppose $\Omega(f)=\Omega_{k_{+},k_{-}}$, $E$ is a proper completely
invariant closed set of $\Sigma_{k_{+},k_{-}}$. Put
\begin{equation}\label{critical}
e_-:=\sup \{\delta \in E, \delta < k_{-}\} \ \ \ \ \ e_+:= \inf
\{\delta \in E,\delta > k_{+}\}
\end{equation}
$l$ and $r$ be the maximal integers so that $f^{l}$ and $f^{r}$ is continuous on $(e_{-},k_{-})$ and $(e_{+},k_{+})$, respectively.
and
$$
w_-:=(k_{-})_{\ell}, \ \ \ \ \ \ w_+:=(k_{+})_{r}
$$
which means the first $l$ words of $k_{-}$ and the first $r$ words of $k_{+}$, then we have
\begin{enumerate}[(1)]
\item  $e_-=w_-^{\infty}$ and $e_+=w_+^{\infty}$, i.e., $e_-$ and
$e_+$ are periodic;

\item  The following decompositions holds:
\begin{equation} \label{comp}\Omega_{k_+,\
k_-}\backslash E= \bigcup_{n \ge 0}\sigma^{-n}((e_-,\ e_+))=
\bigcup_{n \ge 0}\sigma^{-n}((\sigma^r(k_{+}),\
\sigma^{\ell}(k_{-})).
\end{equation}

\item The following equality holds:
$$ E=\Omega_{w_{+}^{\infty},w_{-}^{\infty}}.$$

\item $K=({k_{+},k_{-}})$ can be renormalized by words $w_+$ and
$w_-$.

\end{enumerate}
\end{lemma}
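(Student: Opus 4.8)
The plan is to deduce the lemma from Lemma~1 and Lemma~2 by passing through the coding. Recall that for an expansive Lorenz map $f$ the coding map $\pi_f\colon x\mapsto k_f(x)$, together with its one-sided versions $\pi_{f,\pm}$ at the countably many points of $C(f)$, is monotone, satisfies $\pi_f\circ f=\sigma\circ\pi_f$, and — because expansivity makes $C(f)$ dense — is injective off $C(f)$, so it identifies $(I,f)$ with $(\Omega_{k_+,k_-},\sigma)$ up to that countable ambiguity. The first step is a dictionary between the two settings: given a proper completely invariant closed $E\subseteq\Omega_{k_+,k_-}$, one sets $B:=\pi_f^{-1}(E)$, placing a point of $C(f)$ in $B$ according to whether its one-sided codes lie in $E$, and checks that $B$ is a proper completely invariant closed subset of $I$ for $f$ — forward invariance, preimage-closedness, closedness and properness all transfer because $\pi_f$ is monotone, onto, almost injective and intertwines $f$ with $\sigma$ — and conversely that $E=\pi_f(B)$, so the correspondence $E\leftrightarrow B$ is bijective.

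Now apply Lemma~1 to $B$. It yields $e_-^{I}=\sup\{x\in B:x<c\}$, $e_+^{I}=\inf\{x\in B:x>c\}$, integers $\ell=N((e_-^{I},c))$ and $r=N((c,e_+^{I}))$, and the relations $f^{\ell}(e_-^{I})=e_-^{I}$, $f^{r}(e_+^{I})=e_+^{I}$. Translating: by monotonicity $e_-=\sup\{\delta\in E:\delta<k_-\}$ is exactly $\pi_{f,-}(e_-^{I})$ and $e_+=\pi_{f,+}(e_+^{I})$; since $f^{\ell}(e_-^{I})=e_-^{I}$ and the orbit segment of $e_-^{I}$ of length $\ell$ avoids $c$, the code $e_-$ is $\sigma^{\ell}$-periodic, say $e_-=u^{\infty}$ with $|u|=\ell$, and the identification $u=w_-=(k_-)_{\ell}$ is forced by the definition of $\ell$ as the largest integer for which $f^{\ell}$ is continuous on $(e_-^{I},c)$ — all points of $(e_-^{I},c)$, and hence the left limit at $c$ whose code is $k_-$, share their first $\ell$ symbols, and $e_-^{I}$ shares them because it is $f^{\ell}$-fixed. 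Symmetrically $e_+=w_+^{\infty}$ with $w_+=(k_+)_r$, and the two descriptions of $\ell$, $r$ agree by the definition of $N$. This gives (1).

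For (2) and (3): from Lemma~1 (and the trichotomy of Lemma~2, which makes $B$ nowhere dense once $B\neq I$) the complement $I\setminus B$ is exactly the set of points whose forward $f$-orbit eventually enters the central gap $(e_-^{I},e_+^{I})\ni c$, so $I\setminus B=\bigcup_{n\ge0}f^{-n}((e_-^{I},e_+^{I}))$. Applying $\pi_f$, using the intertwining and $\pi_f((e_-^{I},e_+^{I}))=(w_-^{\infty},w_+^{\infty})$ (the point $c$ splitting into the two codes $k_-<k_+$, both inside this order-interval), gives the first form of (2); the second form follows once one verifies that the sub-interval $(\sigma^{r}(k_+),\sigma^{\ell}(k_-))$, which is the code of the interior of the renormalization interval $[f^{r}(c_+),f^{\ell}(c_-)]$ furnished by Lemma~1, lies inside $(w_-^{\infty},w_+^{\infty})$ and that every $\sigma$-orbit entering $(w_-^{\infty},w_+^{\infty})$ enters $(\sigma^{r}(k_+),\sigma^{\ell}(k_-))$ within boundedly many steps, so the two saturated unions coincide. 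Then $\mathbf a\in E$ iff no shift $\sigma^{n}\mathbf a$ falls in $(w_-^{\infty},w_+^{\infty})$, equivalently $\sigma(w_+^{\infty})\le\sigma^{n}\mathbf a\le\sigma(w_-^{\infty})$ for all $n\ge0$ (here one uses that $w_\pm$ are prefixes of $k_\pm$ together with the admissibility inequalities (2.2)); hence $E=\Omega_{w_+^{\infty},w_-^{\infty}}$, which is (3).

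Finally, (4) is obtained by unwinding the renormalization of Lemma~1 symbolically. The renormalized map $R_Bf$ is a Lorenz map on $[f^{r}(c_+),f^{\ell}(c_-)]$, and from its definition together with $\pi_f\circ f=\sigma\circ\pi_f$ the $f$-code of any point of that interval is the image of its $R_Bf$-code under the substitution $0\mapsto w_-$, $1\mapsto w_+$. Applied to $c_\pm$: the code $k_\pm=k_{f,\pm}(c)$ begins with $w_\pm$ (its first $r$, resp.\ $\ell$, symbols), and its tail is the $w_\pm$-substitution of $k_{R_Bf,\pm}(c)=(k_+^{1},k_-^{1})$; reading $(k_+^{1},k_-^{1})=(1\,0^{p_1}1^{p_2}\cdots,\ 0\,1^{m_1}0^{m_2}\cdots)$ and substituting produces exactly the relations (2.3), with $|w_+|>1$, $|w_-|>1$ (the trivial cases having been removed from the definition) and $p_1,m_1>0$, so $K=(k_+,k_-)$ is renormalizable by $(w_+,w_-)$, equivalently $K=W*RK$ with $W=(w_+^{\infty},w_-^{\infty})$. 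I expect the main obstacle to be the dictionary of the first paragraph — the coding map is genuinely two-valued on the dense set $C(f)$, so one must be careful about which completely invariant closed sets correspond and, in particular, whether the extremal symbolic points $e_\pm$ are realized by the one-sided codes of $e_\pm^{I}$ — with a secondary, more computational, difficulty in the interval bookkeeping that reconciles the central gap $(e_-^{I},e_+^{I})$ with the renormalization interval so that the two forms of (2) match. A purely combinatorial route avoiding the coding map is also available — showing directly inside $\Omega_{k_+,k_-}$ that the $\sigma$-nearest elements of $E$ to $k_\pm$ are periodic and that complete invariance forces the block structure — but it merely re-proves the content of [4] in symbolic language, so going through Lemma~1 and Lemma~2 is the efficient path.
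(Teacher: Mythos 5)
Your strategy is sound but it is genuinely different from the proof the paper actually writes out. The paper's proof (in the Appendix) is entirely combinatorial and self-contained inside $\Sigma_{k_+,k_-}$: it shows $\sigma^{\ell}(e_-)=e_-$ by a two-sided contradiction argument using the maximality of $e_-$ in $E$ and the complete invariance of $E$ (if $\sigma^{\ell}(e_-)<e_-$ there would be a preimage $\gamma\in(e_-,\delta)$ of $e_-$ forced into $E$), proves the decomposition of $\Sigma_{k_+,k_-}\setminus E$ by taking a maximal open interval $U$ in the complement and the largest $N(U)$ with $\sigma^{N(U)}$ continuous on $U$ so that $\sigma^{N(U)}(U)=J_E$, and reads off (3) and (4) from that decomposition and the monotonicity of $\sigma$ on $[a,k_-]$ and $[k_+,b]$. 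You instead transport $E$ to a completely invariant closed subset $B\subseteq I$ through the coding map and invoke Lemma~1 ([4, Theorem~A]) on the interval side. That is exactly the route the paper alludes to in the sentence preceding the lemma ("follow from Lemma 1 and Lemma 2"), and it buys you the periodicity $f^{\ell}(e_-^{I})=e_-^{I}$ for free; the cost is the dictionary itself, which you correctly identify as the main obstacle, since the coding is two-valued on the dense set $C(f)$ and one must decide which one-sided code of $e_-^{I}$ realizes the supremum $e_-$ (it is the right code $\pi_{f,+}(e_-^{I})$, not $\pi_{f,-}$ as written, though the two coincide when $e_-^{I}\notin C(f)$, which periodicity guarantees unless $c$ itself is periodic). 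The paper's combinatorial proof sidesteps this ambiguity entirely, which is presumably why the authors wrote it out.

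Two citations in your sketch do not quite carry the weight you put on them. First, the decomposition $I\setminus B=\bigcup_{n\ge0}f^{-n}((e_-^{I},e_+^{I}))$ is not part of the statement of Lemma~1; it requires the expansivity of $f$ (density of $C(f)$) to force every maximal gap of $B$ to map onto the central gap in finitely many steps, which is precisely the $N(U)$ argument the Appendix supplies and which you would need to reproduce on the interval side. Second, the trichotomy of Lemma~2 concerns only the \emph{minimal} completely invariant closed set $D$, so it cannot be used to conclude that an arbitrary proper $E$ (equivalently $B$) is nowhere dense; fortunately your argument does not actually need nowhere-density, only the gap-saturation just described. With those two substitutions, and the order-reversal fix in the dictionary, your proof goes through.
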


\vspace{1cm}

\begin{corollary}\label{renomal-totaly invariant}
  $K=({k_{+},k_{-}})$ can be renormalized by $(w_+,\
w_-)$ if and only if $\Sigma_{w_+^{\infty},
w_-^{\infty}}$ is a proper completely invariant closed set of
$\Omega_{k_{+},\ k_{-}}$.
\end{corollary}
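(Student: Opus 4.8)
The plan is to read off the two implications from the topological/combinatorial dictionary together with Lemma~4.

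For the implication ``$\Leftarrow$'', assume $E:=\Sigma_{w_{+}^{\infty},w_{-}^{\infty}}$ is a proper completely invariant closed subset of $\Omega_{k_{+},k_{-}}$, and feed it into Lemma~4. The boundary points it produces, $e_-=\sup\{\delta\in E:\delta<k_-\}$ and $e_+=\inf\{\delta\in E:\delta>k_+\}$, are exactly $w_-^\infty$ and $w_+^\infty$: the admissibility inequalities of Lemma~3 for the periodic pair $(w_+^\infty,w_-^\infty)$ give $w_\pm^\infty\in E$ and show that every element of $E$ is $\le\sigma(w_-^\infty)$ and $\ge\sigma(w_+^\infty)$; combined with shift invariance of $E$ this forces every $a\in E$ to satisfy either $a\le w_-^\infty$ or $a$ starts with $1$ (hence $a>k_-$), so no element of $E$ lies strictly between $w_-^\infty$ and $k_-$, and dually at $w_+^\infty$. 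Thus the finite words Lemma~4 extracts from the first $|w_-|$ symbols of $k_-$ and the first $|w_+|$ symbols of $k_+$ are exactly $w_-$ and $w_+$ (the requirement in Definition~1 that $w_\pm$ be the relevant prefixes of $k_\pm$ rules out replacing them by other words with the same infinite powers), and conclusion~(4) of Lemma~4 says precisely that $K=(k_{+},k_{-})$ is renormalizable by $(w_+,w_-)$.

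For ``$\Rightarrow$'', suppose $K=(k_{+},k_{-})=W*RK$ with $W=(w_+^\infty,w_-^\infty)$, and set $E:=\Sigma_{w_{+}^{\infty},w_{-}^{\infty}}$. By Lemma~3 pick an expansive Lorenz map $f$ with $\Omega(f)=\Omega_{k_{+},k_{-}}$; since the combinatorial and topological notions of renormalizability agree, $f$ is renormalizable, so by Lemma~1 there is a proper completely invariant closed set $B\subseteq I$ realizing this renormalization, whose boundary points satisfy $f^{|w_-|}(e_-)=e_-$ and $f^{|w_+|}(e_+)=e_+$; since the first $|w_-|$ itinerary symbols are constant $=w_-$ on $(e_-,c)$, the fixed-point relation forces $k_f(e_-)=w_-^\infty$, and dually $k_f(e_+)=w_+^\infty$. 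Coding $B=I\setminus\bigcup_{n\ge0}f^{-n}((e_-,e_+))$ then yields $E=\Omega_{k_{+},k_{-}}\setminus\bigcup_{n\ge0}\sigma^{-n}\big((\sigma^{|w_+|}k_+,\,\sigma^{|w_-|}k_-)\big)$, which by Lemma~4(2)--(3) equals $\Omega_{w_{+}^{\infty},w_{-}^{\infty}}$. It is closed because it is cut out by the non-strict kneading inequalities (equivalently because $B$ is closed and the coding is continuous), proper because $B\neq I$, and completely invariant because the coding intertwines $f$ with $\sigma$, is order preserving, and carries $f(B)=f^{-1}(B)=B$ to $\sigma(E)=\sigma^{-1}(E)\cap\Omega_{k_{+},k_{-}}=E$.

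The step I expect to be the main obstacle is this last transport in the ``$\Rightarrow$'' direction: the coding map $I\to\Omega_{k_{+},k_{-}}$ is not injective, since it identifies $k_{f,-}(x)$ with $k_{f,+}(x)$ for each $x\in C(f)$, so some care is needed to see that the image of a proper completely invariant \emph{closed} set is again closed and satisfies the symbolic relation $\sigma(E)=\sigma^{-1}(E)\cap\Omega_{k_{+},k_{-}}=E$ exactly; this is where one reuses the bookkeeping of the combinatorial proof of Lemma~3 in the Appendix. If that bookkeeping becomes heavy I would instead prove ``$\Rightarrow$'' purely combinatorially: first establish that every $a\in\Sigma_{w_{+}^{\infty},w_{-}^{\infty}}$ has a unique block factorization $a=w_{\epsilon_1}w_{\epsilon_2}\cdots$ into $w_+,w_-$, then verify $E\subseteq\Omega_{k_{+},k_{-}}$, $\sigma(E)=E$ and $\sigma^{-1}(E)\cap\Omega_{k_{+},k_{-}}=E$ directly at the level of these factorizations using $(k_{+},k_{-})=W*RK$ and the admissibility of $W$, properness being clear since the renormalization is nontrivial ($|w_+|,|w_-|>1$).
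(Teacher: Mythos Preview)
Your ``$\Leftarrow$'' direction is the paper's argument: feed the given set into Lemma~4 and read off conclusion~(4). You even add the check that the finite words Lemma~4 extracts coincide with the prescribed $(w_+,w_-)$, a point the paper leaves implicit.

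For ``$\Rightarrow$'' you take a genuinely different route. The paper never leaves the symbolic space and never invokes Lemma~1 or a realizing map $f$. Given that $K$ is renormalizable by $(w_+,w_-)$ with symbolic renormalization interval $[a,b]=[\sigma^r k_+,\sigma^\ell k_-]$, it simply sets
\[
F_{a,b}=\{\delta\in\Omega_{k_+,k_-}:\mathrm{orb}(\delta)\cap(a,b)\neq\emptyset\}=\bigcup_{n\ge0}\sigma^{-n}\big((a,b)\big),
\]
notes this is open and completely invariant (backward invariance is immediate; forward invariance uses that the renormalized shift returns $(a,b)$ to itself), so $E_{a,b}=\Omega_{k_+,k_-}\setminus F_{a,b}$ is a proper completely invariant closed set, and then Lemma~4(3) identifies $E_{a,b}=\Sigma_{w_+^\infty,w_-^\infty}$. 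That is all.

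This direct construction sidesteps precisely the obstacle you single out --- the non-injectivity of the itinerary coding --- because no coding map ever appears. Your detour through Lemma~1 and a concrete $f$ is not wrong, and it makes the topological/combinatorial dictionary explicit, but it costs you the bookkeeping you anticipate. Interestingly, the ``purely combinatorial'' fallback you sketch at the end (block factorization of sequences in $\Sigma_{w_+^\infty,w_-^\infty}$) is a third route; the paper's orbit-avoidance description of $E_{a,b}$ is cleaner still, since it gets complete invariance essentially for free from the form $\bigcup\sigma^{-n}(U)$ and defers the identification with $\Sigma_{w_+^\infty,w_-^\infty}$ to Lemma~4, rather than proving it by hand.
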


\begin{proof}
\rm According to Theorem 1, $\Omega_{w_+^{\infty},w_-^{\infty}}$ is a proper completely invariant closed set of
$\Omega_{k_{+},k_{-}}$ implies   $\Omega_{k_{+},\ k_{-}}$ can be
renormalized by $w_+$ and $ w_-$.

Now we suppose $\Omega_{k_{+},k_{-}}$ can be renormalized by
$w_+$ and $ w_-$, with renormalization interval $[a, b]$. Put
$$F_{a, b}=\{\delta \in \Omega_{k_{+}, k_{-}}, orb(\delta)\cap (a,b)\neq \emptyset \},$$
$$E_{a,b}=\{\delta \in \Omega_{k_{+}, k_{-}}, orb(\delta)\cap (a,b)= \emptyset \}.$$
Since $F_{a, b}=\bigcup_{n \ge 0}\sigma^{-n}((a, b))$, $F_{a, b}$
is a completely invariant open set. And $E_{a, b}=\Sigma_{k_{+}, k_{-}} \backslash F_{a, b}$ is a completely invariant closed set of
$\Omega_{k_{+}, k_{-}}$. The renormalization associated with
$E_{a, b}$ is just the renormalization with words $(w_+, w_-)$
follows from  Theorem 1. $\hfill\square$
\end{proof}

\begin{remark}\label{two-lorenz}\rm

\
\begin{enumerate} [(1)]
\item There is a one-to-one correspondence between the
renormalizations and proper complete invariant closed set of
$\Omega_{k_{+}, k_{-}}$.
\item {Reormalization is essentially a reduction mechanism. If
$(\Omega_{k_{+},k_{-}},\sigma)$ is renormalizable with finite
words $w_+$ and $w_-$, then its dynamics is captured by two Lorenz
maps: the renormalization map $(\Omega_{k_{+}^{1},k_{-}^{1}},
\sigma)$ and the dynamics on the corresponding completely invariant closed set:
$(\Omega_{ w_+^{\infty} ,  w_-^{\infty} },\ \sigma)$.}
\end{enumerate}
\end{remark}

A possible way to describe the renormalizability of $\Omega(f)=\Omega_{k_{+},k_{-}}$ is to look
for the {\it minimal completely invariant closed set} of $\Omega_{k_{+},k_{-}}$. The
minimal completely invariant closed set relates to the periodic
orbit with minimal period of $(\Omega_{k_{+},k_{-}},\sigma)$. Suppose the minimal period of the
periodic points of it is $\kappa$. It is easy to see that $K(f)$ is
prime if $\kappa=1$ or $\kappa=\infty$. If $1<\kappa<\infty$, then
$(\Omega_{k_{+},k_{-}},\sigma)$ admits unique $\kappa$-periodic orbit $O=\{w^{\infty}, \sigma(w^{\infty}), \cdots, \sigma^{\kappa-1}(w^{\infty})\}$. Put
$D=\overline{\bigcup_{n\ge 0}\sigma^{-n}(O)}$. Then we have the following
Lemma 5.

\justifying
\begin{lemma}\label{pr}
 Let $f$ be an expansive Lorenz map with minimal period $\kappa$, $1<\kappa<\infty$, $K(f)=(k_+, k_-)$ then we have:
\begin{enumerate}[(1)]
\item $D$ is the minimal completely invariant closed set of $(\Omega_{k_{+},k_{-}},\sigma)$.
\item $f$ is renormalizable if and only if $D \neq \Omega_{k_{+},k_{-}}$. If $f$ is
renormalizable, then $R_D$, the renormalization associated to $D$,
is the unique minimal renormalization of $K(f)$.
\item The following trichotomy holds {\it trichotomy:}
 i) $D=\Omega_{k_{+},k_{-}}$, ii) $D=O$, iii) $D$ is a Cantor set.
\end{enumerate}
\end{lemma}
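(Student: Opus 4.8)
\medskip
\noindent\textbf{Proof proposal.} The plan is to obtain the assertion from its topological counterpart, Lemma 2 (that is, [4, Theorem B]), by transporting everything through the symbolic model of Lemma 3. Recall that Lemma 3 identifies, for an expansive Lorenz map $f$, the system $(I,f)$ with $(\Omega_{k_+,k_-},\sigma)$ via the order-preserving kneading coding $x\mapsto k_f(x)$, which is a genuine conjugacy off the countable set $C(f)=\bigcup_{n\ge0}f^{-n}(c)$, once one fixes a convention at the points of $C(f)$ and accounts for their two symbolic lifts. The first step is to turn this into a precise dictionary: an inclusion-preserving bijection between the proper completely invariant closed subsets of $(I,f)$ and those of $(\Omega_{k_+,k_-},\sigma)$ (the properties ``closed'', ``completely invariant'' and ``minimal among completely invariant closed sets'' being insensitive to the countable discrepancy, which only shows up at interval endpoints), under which, by Lemma 1 and Corollary 1, renormalizations of $f$ correspond to renormalizations of $K(f)$ and the minimal renormalization to the minimal one.

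Granting the dictionary, the rest is bookkeeping. Since the coding conjugates the two systems, the minimal period of periodic points of $(\Omega_{k_+,k_-},\sigma)$ is again $\kappa$, and by Lemma 2(1) $f$ has a unique $\kappa$-periodic orbit $O_f$; hence $(\Omega_{k_+,k_-},\sigma)$ has a unique $\kappa$-periodic orbit $O=\{w^{\infty},\sigma w^{\infty},\ldots,\sigma^{\kappa-1}w^{\infty}\}$ corresponding to $O_f$, and $\overline{\bigcup_{n\ge0}f^{-n}(O_f)}$ corresponds to $\overline{\bigcup_{n\ge0}\sigma^{-n}(O)}=D$; that is, $D$ corresponds to the minimal completely invariant closed set $D_f$ of $f$ supplied by Lemma 2(2). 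Now (1), (2) and (3) of the present lemma are precisely the images of Lemma 2(2), (3) and (4): $D$ is the minimal completely invariant closed set of $(\Omega_{k_+,k_-},\sigma)$; $f$ is renormalizable if and only if $D_f\ne I$, equivalently $D\ne\Omega_{k_+,k_-}$, and in that case the renormalization $R_D$ attached to $D$ by Corollary 1 corresponds to $R_{D_f}$ and is therefore the unique minimal renormalization of $K(f)$; and the trichotomy $D_f=I$ / $D_f=O_f$ / $D_f$ a Cantor set transports verbatim, a homeomorphism preserving ``the whole space'', ``finite set'' and ``perfect totally disconnected compact set''.

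A reader preferring to stay symbolic can argue directly. The set $A=\bigcup_{n\ge0}\sigma^{-n}(O)$ is completely invariant in $\Omega_{k_+,k_-}$ because $\sigma(O)=O$ and $\sigma^{\kappa}(w^{\infty})=w^{\infty}$, and $D=\overline{A}$ inherits complete invariance since $\sigma$ restricted to $\Omega_{k_+,k_-}$ is continuous, surjective and finite-to-one. The heart of the matter is minimality: one must show every nonempty completely invariant closed $E\subseteq\Omega_{k_+,k_-}$ contains $O$, whence $A\subseteq E$ and $D\subseteq E$ on taking closures. For proper $E$, the preceding lemma on the periodic boundary points writes $E=\Omega_{w_+^{\infty},w_-^{\infty}}$ with $e_-=w_-^{\infty}$ and $e_+=w_+^{\infty}$ periodic; comparing these boundary points with the minimal-period orbit --- the combinatorial step carried out in [4] --- forces $O\subseteq E$. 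Finally the trichotomy is read off the decomposition $\Omega_{k_+,k_-}\setminus D=\bigcup_{n\ge0}\sigma^{-n}\bigl((\sigma^{r}(k_+),\sigma^{\ell}(k_-))\bigr)$ of that lemma: either this gap is absent and $D=\Omega_{k_+,k_-}$, or it fills $\Omega_{k_+,k_-}\setminus O$ and $D=O$, or its complement $D$ is a Cantor set.

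\medskip
\noindent\textbf{Main obstacle.} The single genuinely delicate step is the dictionary of the first paragraph: making the word ``conjugacy'' in Lemma 3 precise enough that the lattice of completely invariant closed sets and the notion of ``minimal renormalization'' transfer without loss. Concretely, one has to verify that the symbolic image of a completely invariant closed subset of $I$ is completely invariant and closed in $\Omega_{k_+,k_-}$, and conversely, correctly handling the two codes over each point of $C(f)$ and the open intervals that collapse in the symbolic space; this is exactly where expansivity --- the density of $C(f)$ --- is used. Everything afterwards is routine, and the direct symbolic argument above sidesteps this point at the price of re-deriving the combinatorial core of [4].
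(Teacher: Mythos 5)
Your proposal matches the paper's own treatment: the paper states this lemma without a separate proof, asserting that the combinatorial statements follow from Lemma 1 and Lemma 2 ([4, Theorems A and B]) via the kneading correspondence of Lemma 3, which is exactly your first route. Your identification of the transfer dictionary as the only delicate point, together with the optional direct symbolic argument resting on the preceding lemma's periodic boundary points, if anything supplies more detail than the paper itself gives.
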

\begin{lemma}Let $W=(w_{+}^{\infty},w_{-}^{\infty})$ be an admissible periodic kneading invariant, then $W*K^{*}$ is expansive if and only if $K^{*}$ is expansive.
\begin{proof}\rm By the associative law of star product, we can write $K=W*K^{*}=(k_{+},k_{-})$, where $K^{*}=(k_{+}^{*},k_{-}^{*})$ is admissible expansive kneading invariant. It follows that
\begin{equation}
\left \{ \begin{array}{ll}
k_{+}=&w_{+} w_{-}^{p_1} w_{+}^{p_2} \cdots,\\
k_{-}=&w_{-} w_{-+}^{m_1} w_{-}^{m_2} \cdots,
\end{array}
\right.
\end{equation}
Suppose there is an integer $i>1$ such that $\sigma^{i}(k_{+})=\sigma(k_{-})=\sigma(w_{-} w_{-+}^{m_1} w_{-}^{m_2} \cdots)$. Since $(k_{+},k_{-})$ are admissible kneading invariant, they satisfy (2.2), $\sigma(k_{-})$ is the largest element in the kneading space $\Omega_{k_{+},k_{-}}$, which indicates that the preimage of $\sigma^{i}(k_{+})$ is $\sigma^{i-1}(k_{+})=0\sigma(k_{-})=w_{-} w_{-+}^{m_1} w_{-}^{m_2} \cdots$. It follows that there exists positive integer $j$ such that $\sigma^{j}(k_{+}^{*})=\sigma(k_{-}^{*})$, which contradicts to the expansiveness of $(k_{+}^{*},k_{-}^{*})$. Hence, such an $i$ can not exist. By similar arguments, there is no positive integer $i'$ such that $\sigma^{i'}(k_{-})=\sigma(k_{+})$. So $(k_{+},k_{-})$ satisfy the Hubbard-Sparrow condition (2.2). Using Lemma 3, there exists expansive Lorenz map $g$ with kneading invariant $(k_{+},k_{-})$, and the expansive Lorenz map is unique up to topological conjuacy. $\hfill\square$
\end{proof}
\end{lemma}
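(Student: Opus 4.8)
The plan is to read ``$(k_{+},k_{-})$ is expansive'' as ``$(k_{+},k_{-})$ satisfies the Hubbard-Sparrow inequalities (2.2)'', and to use that the $*$-product of two admissible pairs is admissible: this forces $W*K^{*}=(k_{+},k_{-})$ to obey the non-strict form of (2.2), so on each side expansiveness fails \emph{exactly} when some ``forbidden equality'' $\sigma^{n}(k_{+})=\sigma(k_{-})$ or $\sigma^{n}(k_{-})=\sigma(k_{+})$ occurs, the strict parts of (2.2) being all that can break. Thus the lemma reduces to transporting forbidden equalities across the block substitution $\Phi$ given by $\Phi(0\mathbf{x})=w_{-}\Phi(\mathbf{x})$ and $\Phi(1\mathbf{x})=w_{+}\Phi(\mathbf{x})$, which realises $k_{\pm}=\Phi(k_{\pm}^{*})$, is injective and strictly order-preserving (since $w_{-}$ begins with $0$, $w_{+}$ with $1$, and neither is a prefix of the other), and intertwines the shifts in the sense that $\sigma^{L}(\Phi(\mathbf{x}))=\Phi(\sigma^{j}\mathbf{x})$ whenever $L$ is the length of $\Phi$ applied to the first $j$ symbols of $\mathbf{x}$.

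For the implication ``$W*K^{*}$ expansive $\Rightarrow$ $K^{*}$ expansive'' I would use Section~2 directly: if $W*K^{*}=K(f)$ with $f$ expansive, then by Corollary~1 the set $E=\Omega_{w_{+}^{\infty},w_{-}^{\infty}}$ is a proper completely invariant closed set of $\Omega_{k_{+},k_{-}}$, so $f$ is renormalizable, the renormalization $R_{E}f$ is again an expansive Lorenz map (a renormalization of an expansive Lorenz map is expansive, cf.\ [4]), and its kneading invariant is exactly $K^{*}$; hence $K^{*}$ is expansive. One can also see this combinatorially: taking $m$ minimal with $\sigma^{m}(k_{+}^{*})=\sigma(k_{-}^{*})$ (the maximum of $\Omega_{k_{+}^{*},k_{-}^{*}}$) forces $\sigma^{m-1}(k_{+}^{*})=k_{-}^{*}$, whence the intertwining identity gives $\sigma^{\ell}(k_{+})=\Phi(k_{-}^{*})=k_{-}$ for a suitable aligned index $\ell$, so $\sigma^{\ell+1}(k_{+})=\sigma(k_{-})$ --- a forbidden equality for $W*K^{*}$ (the borderline $m=0$, the symmetric type $\sigma^{m}(k_{-}^{*})=\sigma(k_{+}^{*})$, and a non-admissible $K^{*}$ are dealt with in the same spirit).

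For the converse, ``$K^{*}$ expansive $\Rightarrow$ $W*K^{*}$ expansive'', I would argue by contradiction. If $W*K^{*}$ is admissible but not expansive it carries a forbidden equality, say $\sigma^{i}(k_{+})=\sigma(k_{-})$ with $i$ minimal (the other type is symmetric). Since $\sigma(k_{-})$ is maximal in $\Omega_{k_{+},k_{-}}$ and $i$ is minimal, $\sigma^{i-1}(k_{+})=k_{-}=\Phi(k_{-}^{*})$, and as $k_{+}$ begins with $1$ while $k_{-}$ begins with $0$ we get $i\ge 2$. The decisive step is to conclude that $i-1$ must fall on a block boundary of $\Phi(k_{+}^{*})$, i.e.\ that $\Phi$ is \emph{recognizable}; then $\sigma^{i-1}(k_{+})=\Phi(\sigma^{j}(k_{+}^{*}))$ for some $j$, so injectivity of $\Phi$ gives $\sigma^{j}(k_{+}^{*})=k_{-}^{*}$ and hence $\sigma^{j+1}(k_{+}^{*})=\sigma(k_{-}^{*})$, contradicting the expansiveness of $K^{*}$.

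The hard part is precisely this recognizability step --- that no ``misaligned'' shift of $\Phi(k_{+}^{*})$, one starting with a proper suffix of a $w_{\pm}$-block, can coincide with the ``aligned'' sequence $\Phi(k_{-}^{*})$. This is where the admissibility of the periodic pair $W$ and the explicit exclusion of the trivial pairs $(w_{+},w_{-})=(1,01)$ and $(w_{+},w_{-})=(10,0)$ really enter: otherwise a $w_{\pm}$-block could overlap a boundary and still reproduce the leading block $w_{-}$ of $\Phi(k_{-}^{*})$. If one prefers to sidestep the raw combinatorics, the converse can also be obtained from the structure theory of Section~2: $W*K^{*}=K(f)$ for some Lorenz map $f$, renormalizable by $(w_{+},w_{-})$ with renormalization $R_{E}f$ of kneading invariant $K^{*}$; since $K^{*}$ is expansive so is $R_{E}f$, and because the proper completely invariant closed set $E=\Omega_{w_{+}^{\infty},w_{-}^{\infty}}$ is nowhere dense --- the complement of $E$ being a union of preimages of the renormalization interval --- density of the $c$-preimages for $R_{E}f$ in that interval pulls back to density in all of $I$, so $f$, and thus $W*K^{*}$, is expansive. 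Either way, the single genuine obstacle is controlling how the blocks $w_{+},w_{-}$ can overlap, which the hypotheses on $W$ are tailored to rule out.
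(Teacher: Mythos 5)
Your proposal is correct, and on the substantive direction ($K^{*}$ expansive $\Rightarrow$ $W*K^{*}$ expansive) it follows essentially the same route as the paper: suppose a forbidden equality $\sigma^{i}(k_{+})=\sigma(k_{-})$, use the maximality of $\sigma(k_{-})$ in the kneading space to back up one step to $\sigma^{i-1}(k_{+})=k_{-}$, and then transport the equality through the block substitution to contradict the expansiveness of $K^{*}$. The step you isolate as the genuine obstacle --- recognizability, i.e.\ that the index $i-1$ must land on a block boundary of the $w_{\pm}$-decomposition of $k_{+}$ --- is precisely the step the paper dispatches with the unexplained phrase ``it follows that there exists a positive integer $j$ such that $\sigma^{j}(k_{+}^{*})=\sigma(k_{-}^{*})$'', so you have correctly located the only nontrivial content of the argument; neither you nor the paper supplies the overlap analysis, but your framing at least makes visible where admissibility of $W$ and the exclusion of the trivial pairs $(1,01)$ and $(10,0)$ must enter. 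Where you genuinely diverge, and improve on the paper, is in treating both implications: the paper's proof as written only establishes that $K^{*}$ expansive forces $W*K^{*}$ expansive, whereas you also derive the forward implication from Corollary 1 together with the fact that the renormalization $R_{E}f$ of an expansive Lorenz map is again expansive and has kneading invariant $K^{*}$. Your alternative topological argument for the hard direction (density of the preimages of $c$ on the renormalization interval pulls back to density on all of $I$ because the complement of $E=\Omega_{w_{+}^{\infty},w_{-}^{\infty}}$ is the union of preimages of that interval, by Lemma 4) is also sound and sidesteps the block-overlap combinatorics entirely; it is arguably the cleaner way to make the lemma fully rigorous.
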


\begin{definition}\label{consecutive renormalization}
\rm Let $f$ be an expansive Lorenz map. The minimal renormalization is said to be periodic if the minimal completely invariant closed set $D=O$, where $O$ is the periodic orbit with minimal period of $f$. And the $i$th renormalization $R^{i}f$ is periodic if it is a periodic renormalization of $R^{i-1}f$.
\end{definition}
\par The periodic renormalization is interesting because
$\beta$-transformation can only be renormalized periodically (see [3][6])
. This kind of renormalization was studied by
Alsed$\grave{a}$ and Falc$\grave{o}$ [1], Malkin [11]. It
was called phase locking renormalization in [1] because it
appears naturally in Lorenz map whose rotational interval
degenerates to a rational point.

\begin{theorem}{\bf(Factorization Theorem)}.\label{renormalization}\label{decomposition of kneading invariant}
\rm An admissible kneading invariant $K=(k_+, k_-)$ can be factorized uniquely to the $*$-products of  prime kneading invariants.
 More precisely,

\begin{enumerate}[(1)]
\item If $K$ is finitely renormalizable, there is positive integer $m$ such that $R^mK:=K^*=(k^*_+, k^*_-)$ is prime. There are $m$  pair or finite words $W_i=(w_{i,+}, w_{i,-})$ so that $W_i^{\infty}=(w_{i,+}^{\infty}, w_{i,-}^{\infty})$ ($i=1, \cdots, m$)is  periodic prime kneading invariant such that
 $$K=W_1*\cdots*W_m*K^*.$$
 \item If $K$ is infinitely renormalizable, then there is a sequence of $W_i=(w_{i,+}, w_{i,-})$ so that $W_i^{\infty}=(w_{i,+}^{\infty}, w_{i,-}^{\infty})$ ($i=1, \cdots, $) is admissible periodic prime kneading invariant such that
$$K=W_1*W_2*W_3*\cdots.$$
\end{enumerate}
\end{theorem}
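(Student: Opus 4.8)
The plan is to build the factorization by iterating the minimal renormalization operator and to extract uniqueness from the one-to-one correspondence between renormalizations and proper completely invariant closed sets (Corollary~\ref{renomal-totaly invariant} together with Lemma~\ref{pr}). First I would treat existence. Starting from an admissible $K=(k_+,k_-)$, if $K$ is prime there is nothing to prove. Otherwise, Lemma~\ref{pr} tells us that the minimal completely invariant closed set $D$ of $(\Omega_{k_+,k_-},\sigma)$ is proper, and the renormalization $R_D$ associated to $D$ is the unique minimal renormalization. I would show that this minimal renormalization is \emph{periodic} in the sense of Definition~\ref{consecutive renormalization}, i.e. that its defining word pair $W_1=(w_{1,+},w_{1,-})$ satisfies $W_1^\infty=(w_{1,+}^\infty,w_{1,-}^\infty)$ being an \emph{admissible periodic prime} kneading invariant: periodicity is forced because, by the trichotomy, the minimal $D$ is either $O$ or a Cantor set built over the single $\kappa$-periodic orbit $O$, so the boundary points $e_\pm$ of the renormalization interval are periodic (Theorem~1(1)), hence $w_{1,\pm}$ are the periods of $k_\pm$ truncated at length $\ell,r$ and $W_1^\infty=(w_{1,+}^\infty,w_{1,-}^\infty)$; primeness of $W_1^\infty$ follows because $O$ is the orbit of \emph{minimal} period, so $W_1^\infty$ cannot itself be renormalized. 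Then $K=W_1*R_DK$ by Definition~\ref{com-renormal} and the $*$-product notation, and I set $K^{(1)}:=R_DK$, which is admissible.

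Next I iterate: if $K^{(1)}$ is prime, stop with $m=1$; else apply the same step to get $W_2$ periodic prime and $K^{(2)}=R_{D^{(1)}}K^{(1)}$ with $K^{(1)}=W_2*K^{(2)}$. By associativity of $*$ we accumulate $K=W_1*W_2*\cdots*W_j*K^{(j)}$. If the process halts after $m$ steps we get case (1); if it never halts we get the infinite $*$-product $K=W_1*W_2*\cdots$ of case (2). For the infinite case I should note the product is well-defined as a limit: each additional factor only refines the sequence beyond a strictly increasing prefix length (since $|w_{i,+}|,|w_{i,-}|>1$), so the partial products converge in $\Omega\times\Omega$ and the limit is exactly $K$; this is precisely the statement that $f$ is $\infty$-times renormalizable. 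The fact that halting after exactly $m$ steps is equivalent to $R^mK$ being prime is immediate from the definition of ``$m$ times renormalizable.''

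For uniqueness, suppose $K=W_1*\cdots*W_m*K^* = V_1*\cdots*V_n*L^*$ are two factorizations into periodic prime factors with prime tails. The key point is that the first factor is forced: by Corollary~\ref{renomal-totaly invariant}, $\Sigma_{w_{1,+}^\infty,w_{1,-}^\infty}$ and $\Sigma_{v_{1,+}^\infty,v_{1,-}^\infty}$ are both proper completely invariant closed subsets of $\Omega_{k_+,k_-}$; since $W_1,V_1$ are prime (hence correspond to the \emph{minimal} such set — the orbit closure of the minimal-period periodic orbit, by Lemma~\ref{pr}(2)), both must equal the minimal completely invariant closed set $D$, whence $W_1=V_1$. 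I would then argue that $*$-product is left-cancellative: if $W*X=W*Y$ with $W$ periodic, then $X=Y$, because applying the decoding map $R_{w_+,w_-}$ (Remark, item (1) after Definition~\ref{com-renormal}) recovers the cofactor uniquely. Cancelling $W_1=V_1$ reduces to $W_2*\cdots*K^*=V_2*\cdots*L^*$, and induction finishes, forcing $m=n$, $W_i=V_i$ for all $i$, and $K^*=L^*$; in the infinite case the same cancellation argument applied at every finite stage gives $W_i=V_i$ for all $i$.

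The main obstacle I anticipate is the uniqueness argument, specifically pinning down that a \emph{prime} periodic factor $W_1$ must correspond to the \emph{minimal} completely invariant closed set rather than to some larger proper completely invariant closed set. One has to rule out that $K$ admits two different ``first-level'' periodic prime renormalizations; this is where primeness of the factor is essential — a periodic prime $W_1^\infty$ has $\Sigma_{w_{1,+}^\infty,w_{1,-}^\infty}$ equal to the orbit $O$ of minimal period (trichotomy case ii applied to $W_1^\infty$ itself, which is prime hence not further renormalizable), and that orbit is intrinsic to $K$ by Lemma~\ref{pr}(1), hence common to both factorizations. Establishing left-cancellativity of $*$ cleanly (making sure the decoding $R_{w_+,w_-}$ is well-defined on the relevant kneading space and really inverts the substitution $(w_+,w_-)\mapsto(1,0)$) is the other place requiring care, but it is essentially bookkeeping with the lexicographic order.
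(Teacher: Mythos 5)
Your existence argument is essentially the paper's: iterate the minimal renormalization, use the minimal completely invariant closed set $D_1$ and the trichotomy to see that the boundary points $d_{\pm}$ are periodic, so the renormalization words $W_1=(w_{1,+},w_{1,-})$ give a periodic kneading invariant $W_1^{\infty}$ whose primeness comes from the minimality of $D_1$; then $K=W_1*RK$ and one repeats. Two points of care. First, the minimal renormalization is \emph{not} always periodic in the sense of Definition 2 (that would mean $D_1=O_1$; in general $D_1$ may be a Cantor set) --- what is true, and what you actually use, is that $W_1^{\infty}$ is a periodic \emph{sequence} pair. Second, in the genuinely periodic case $D_1=O_1$ the paper points out that $W_1^{\infty}$ is not Hubbard--Sparrow admissible (it is realized only by a rational rotation), so the blanket phrase ``admissible periodic prime'' needs that caveat.

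The uniqueness half is where you go beyond the paper, whose written proof only constructs the canonical factorization and never verifies that an arbitrary factorization into prime factors must coincide with it; your strategy (first factor forced, then left-cancel and induct) is the right one, but the key step as you justify it is wrong. You claim that a periodic prime $W_1^{\infty}$ has $\Sigma_{w_{1,+}^{\infty},w_{1,-}^{\infty}}$ equal to the orbit $O$ of minimal period by ``trichotomy case ii applied to $W_1^{\infty}$ itself.'' When the first renormalization is non-periodic, $\Sigma_{W_1^{\infty}}=D_1$ is a Cantor set, not a finite orbit; moreover the trichotomy classifies the minimal CICS of a shift, so applied to the prime shift $(\Sigma_{W_1^{\infty}},\sigma)$ it puts you in case (i), not case (ii). The correct argument, which your middle paragraph gestures at, is: by Corollary 1, $\Sigma_{V_1^{\infty}}$ is a proper completely invariant closed set of $\Omega_K$, so $D_1\subseteq\Sigma_{V_1^{\infty}}$ by minimality; if the inclusion were proper, $D_1$ would be a proper completely invariant closed subset of $(\Sigma_{V_1^{\infty}},\sigma)$ (complete invariance restricts to the subshift), so by Corollary 1 again $V_1^{\infty}$ would be renormalizable, contradicting its primeness. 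Hence $\Sigma_{V_1^{\infty}}=D_1=\Sigma_{W_1^{\infty}}$, and since the words are read off as the initial segments of $k_{\pm}$ determined by the boundary points of this set, $V_1=W_1$. With that repair, plus a genuine verification that the substitution $(1,0)\mapsto(w_+,w_-)$ is injective on the relevant sequences (so that $*$ is left-cancellative), your induction closes and supplies the uniqueness the theorem asserts.
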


\begin{proof}
\rm Let $K=(k_+, k_-)$ is an admissible kneading invariant. If $K$ is prime, the proof is completed. If $K$ is not prime, then by Lemma *, the Lorenz shift $(\Omega_K, \sigma)$ admits a unique periodic orbit $O_1$ with minimal period, and a minimal completely invariant closed set $D_1$.
Put $d_-=\sup\{{\bold a} \in D_1, {\bold a}<k_-\}$,  $d_+=\sup\{{\bold a} \in D_1, {\bold a}>k_+\}$. We know that both $d_-$ and $d_+$ are periodic points of $(\Omega_K, \sigma)$. There are finite words $w_{1,-}$ and $w_{1,+}$ such that $d_-=w_{1,-}^{\infty}$ and $d_+=w_{1,+}^{\infty}$. And $K$ can be renormalized by $W_1=(w_{1,+}, w_{1,-})$. According to the definition of $*$-product, we have $K=W_1*RK$.

Now we consider the dynamics on the minimal completely invariant closed set $D_1$. By Lemma *, if $K$ is renormalizable,  we have either $D_1=O_1$ or $D_1$ is a Cantor set in $\Omega_K$.

If $D_1$ is a Cantor set in  $\Omega_K$, since $d_-$ is the maximum point among all points in $D_1$ which are less than $k_-$, and $d_+$ is the minimum point among all points in $D_1$ which are greater than $k_+$, we conclude that $\sigma d_-$ is the maximum point and  $\sigma d_+$  is the minimum point in $D_1$. It follows that $W_1^{\infty}=(w_{1,+}^{\infty}, w_{1,-}^{\infty})$ is admissible periodic kneading invariant. The minimality of $D_1$ indicates that $(\Omega_K, \sigma)$ is locally eventually onto, it is prime. So $W_1^{\infty}$ is an admissible periodic prime kneading invariant.
Observe that both $w_{1,+}^{\infty}$  and $ w_{1,-}^{\infty}$ are periodic points,  $(\Omega_{D_1}, \sigma)$ is a sub-shift of finite type.
If $D_1=O_1$, i.e., the minimal renormalization is periodic, then both $d_-$ and $d_+$ belong to $O_1$, so $w_{1,+}^{\infty}$ and $w_{1,-}^{\infty}$ belong to the same periodic orbit $O_1$ with rotation number $p/q$, and $(p, q)=1$. By results in [6] or [3], the periodic orbit $O_1$ is a well-ordered periodic orbit with rotation number $p/q$, and $(\Omega_{O_1}, \sigma)$ is prime. In this case, the kneading invariant $W_1^{\infty}=(w_{1,+}^{\infty}, w_{1,-}^{\infty})$ is not admissible in the sense of Hubbard and Sparrow, because there is no expansive Lorenz map can realize the kneading invariant. There is still Lorenz map, $g(x)=x+p/q, \ mod \ \ 1$, whose kneading invariant is $W_1^{\infty}=(w_{1,+}^{\infty}, w_{1,-}^{\infty})$. Note that $g(x)$ is a rational rotation, which is not topologically expansive.
If $RK$ is prime, put $Rk=K^*$, we obtain $K=W_1*K^*$, the factorization is completed. If $RK$ is not prime, repeat the above arguments, one can obtain the minimal completely invariant closed set $D_2$ of $(\Omega_{RK}, \sigma)$, and obtain a pair of words $W_2=(w_{2,+}, w_{2,-})$ so that $RK=W_2*R(RK)=W_2*R^2K$, and $W_2^{\infty}=(w_{2,+}^{\infty}, w_{2,-}^{\infty})$ is an admissible periodic prime kneading invariant. Combine the first two renormalizations, we obtain $K=W_1*W_2*R^3K$. By the same arguments, we know that the restriction of $(\Omega_{RK}, \sigma)$ on $(\Omega_{D_2}, \sigma)$ is a sub-shift of finite type if the renormalization is not periodic, and it is a well-ordered periodic orbit with rational rotation number if the renormalization is periodic.

Repeat the procedure if necessary. If $K$ can be renormalized finite times, there exists positive integer $m$ such that $R^mK$ is prime. Put $R^mK=K^*$, we conclude that $K=W_1*\cdots*W_m*K^*$, and $W_i^{\infty}$  $( i=1,\cdots, m)$ are admissible periodic prime kneading invariants.
If $K$ can be renormalized infinite times, we can obtain a sequence of admissible periodic prime kneading invariants $\{W_i^{\infty}\}$ ($i=1, 2,\cdots $) such that $K=W_1*W_2*W_3*\cdots$.
$\hfill\square$

\end{proof}

\section{Linearization of prime expansive Lorenz maps}

In the one-dimensional dynamics, Ulam asked the following piecewise linearizing question: ``Is every `smooth' function $f(x)$ on $(0,1)\rightarrow(0,1)$ conjugate to a suitable piecewise linear function?" He emphasized that ``An affirmative answer to the above question would reduce the study of the iteration of such functions to $f(x)$ to a purely combinatorial investigation of the properties of `broken line' functions." [22]. Limited work has been done on the conjugacy to a linear mod one transformation [6]. It would be interesting to ask the following uniform piecewise linearizing question: Given a map $f$ on $[0,1]\rightarrow[0,1]$, is it possible to find a non-decreasing homeomorphism $h$ and a linear mod one transformation $T$ such that $h\circ f=g\circ h$? In this paper, we are concerned with the linearization of expansive Lorenz maps in combinatorial way.
\par As one of the simplest uniform piecewise linear map on the interval, the linear mod one transformation [5] defined by
$$T_{\beta, \alpha}(x)=\beta x+\alpha \ \ \ mod \ \ 1$$
has attracted considerable attention. When $1 < \beta
\leq 2$, $\alpha+\beta\leq 2$,  $T_{\beta, \alpha}(x) $ has one discontinuity at $x=(1-\alpha)/\beta$.
 $\beta$-transformation form an explicit two-parameter family of a dynamical systems whose complicated behavior can be completely determined by their coordinates $(\beta,\alpha)$ in parameter space. A simple extension of an argument of Milnor and Thurston [12] shows that every Lorenz map $f$ is semi-conjugate to a $\beta$-transformation; furthermore semi-conjugacy preserves topological entropy. Hubbard and Sparrow [6] implies that the semi-conjugacy can be full topological conjugacy if and only if $f$ has the same kneading invariant as some $\beta$-transformation $T(x)$. According to Parry [17], $f$
is conjugate to a $\beta$-transformation if $f$ is strongly transitive, and an expanding Lorenz map is strongly transitive if and only if it is prime. Hence we can know an expansive Lorenz map corresponds to a pair $(\beta,\alpha)$ if it is Prime, where $(\beta,\alpha)$ means the parameters of corresponding $\beta$-transformation.

\subsection{Calculation of $\alpha$ and $\beta$}
In 1988, Milnor and Thurston [12] have studied that if $f:[0,1]\rightarrow [0,1]$ is a unimodal map, then its topological entropy is related to the smallest positive zero $s$ of a certain power series by $h(f)=\log(1/s)$. In 1996, Paul Glendinning and Toby Hall [7] proved a similar result for Lorenz maps. In 2014, work directly with the symbolic space and do not require it to be the address space of some map, Barnsley [2] put forward the following result.
\begin{lemma}{\bf([2],Lemma 3)} Let $k_{-}=a_{1}a_{2}\cdots$, $k_{+}=b_{1}b_{2}\cdots\in\{0,1\}^{N}$, with $a_{1}a_{2}=01$, $b_{1}b_{2}=10$, and $\beta>1$, for the lexicographic or alternating lexicographic order on $\{0,1\}^{N}$. Set
$$ K(z)=\sum_{i=1}^{\infty}(b_{i}-a_{i})z^{i-1}=\langle b\rangle_{1/z}-\langle a\rangle_{1/z} $$ In case of the lexicographic order, $1/\beta$ is the smallest positive root of $K(z)$. In the alternating case, $-1/\beta$ is the largest negative root of $K(z)$.
\end{lemma}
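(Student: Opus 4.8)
The plan is to identify $K(z)$, up to a factor that does not vanish on $(0,1)$, with the Milnor--Thurston kneading determinant of the Lorenz map whose kneading invariant is $(k_{+},k_{-})=(b,a)$, and then to quote the kneading-determinant formula for topological entropy. Throughout, $(k_{+},k_{-})$ is understood to be the kneading invariant of a prime expansive Lorenz map $f$ with $h_{top}(f)>0$, and $\beta=\exp(h_{top}(f))$; by Parry's theorem (recalled in Section~3) such an $f$ is topologically conjugate to the linear mod one transformation $T_{\beta,\alpha}$, so this $\beta$ is also the expansion constant of the associated $\beta$-transformation, and topological conjugacy preserves the kneading invariant and hence the kneading determinant.

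First I would write down the $2\times2$ kneading matrix $N(z)$ of $f$. The two address intervals are $I_{0}=[0,c)$ and $I_{1}=(c,1]$; the discontinuity has two one-sided orbits, whose itineraries are $k_{+}$ (first symbol $1$, since $c^{+}\in I_{1}$) and $k_{-}$ (first symbol $0$); and since both branches of $f$ are increasing there are no orientation reversals, so, with all signs equal to $+1$, one has $N_{1,0}(z)=\sum_{n\ge0}(1-b_{n+1})z^{n}$, $N_{2,0}(z)=\sum_{n\ge0}(1-a_{n+1})z^{n}$ and $N_{j,0}(z)+N_{j,1}(z)=\sum_{n\ge0}z^{n}$ for $j=1,2$. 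Because the two rows of $N(z)$ have the same sum, $(1-z)\det N(z)=N_{1,0}(z)-N_{2,0}(z)=\sum_{n\ge0}(a_{n+1}-b_{n+1})z^{n}=-K(z)$; thus $\det N(z)=-K(z)/(1-z)$, and on $(0,1)$ the positive zeros of $\det N$ and of $K$ coincide.

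Next I would invoke the Milnor--Thurston theorem in the Lorenz-map version of~[7]: the smallest positive zero $s$ of the kneading determinant satisfies $h_{top}(f)=\log(1/s)$. Combined with $\beta=\exp(h_{top}(f))$ this gives $s=1/\beta$, and by the previous step $1/\beta$ is the smallest positive zero of $K$, which is the lexicographic assertion. That $1/\beta$ is at least \emph{a} zero also has an elementary proof worth recording as a check: unfolding $T_{\beta,\alpha}(x)=\beta x+\alpha-\varepsilon(x)$ (where $\varepsilon(x)\in\{0,1\}$ is the branch index) along an orbit gives $x=\sum_{i\ge1}(\varepsilon(T_{\beta,\alpha}^{\,i-1}x)-\alpha)\beta^{-i}$ for every $x$, and applying this to $c$ through its two one-sided itineraries $k_{+}$ and $k_{-}$ yields $\sum_{i\ge1}b_{i}\beta^{-i}=c+\alpha/(\beta-1)=\sum_{i\ge1}a_{i}\beta^{-i}$, i.e. $K(1/\beta)=\beta\sum_{i\ge1}(b_{i}-a_{i})\beta^{-i}=0$.

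For the alternating-lexicographic statement I would use the substitution $z\mapsto-z$: replacing the lexicographic valuation $\langle\cdot\rangle_{1/z}$ by the alternating one is the same as negating the base, so it turns $K(z)$ into $K(-z)$ and ``smallest positive root'' into ``largest negative root''; rerunning the kneading-matrix computation with the alternating order (equivalently, for the reflected map) then produces $-1/\beta$. I expect the real obstacle to be the \emph{minimality} clause. The naive estimate $|K(z)-1|\le z/(1-z)$ only rules out roots in $(0,1/2)$, and since $\beta$ may be arbitrarily close to $1$ this does not suffice; one genuinely needs either the Milnor--Thurston machinery --- and hence the verification, carried out in~[7], that it applies to the possibly discontinuous map $f$ --- or, for a self-contained argument, a careful monotonicity analysis of $\gamma\mapsto\sum_{i\ge1}(b_{i}-a_{i})\gamma^{-i}$ on $(\beta,\infty)$ that exploits the admissibility inequalities~(2.2) to bound how often consecutive digits of $k_{+}$ and $k_{-}$ can disagree.
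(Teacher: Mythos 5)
The paper offers no proof of this statement: Lemma~6 is imported verbatim from Barnsley--Steiner--Vince~[2] (their Lemma~3), and the authors explicitly flag that the point of~[2] is to work ``directly with the symbolic space'' without assuming the pair $(b,a)$ is the itinerary of any actual map. Your argument is therefore a genuinely different route --- essentially the Glendinning--Hall~[7] kneading-determinant derivation rather than the purely symbolic one of~[2]. Within that framing your computation is correct: the identity $(1-z)\det N(z)=N_{1,0}(z)-N_{2,0}(z)=-K(z)$ follows from the equal row sums, the unfolding $x=\sum_{i\ge1}(\varepsilon(T^{i-1}x)-\alpha)\beta^{-i}$ applied to the two one-sided itineraries of $c$ does give the clean elementary verification that $K(1/\beta)=0$, and you are right that the minimality of this root is the real content and cannot be had by naive estimates. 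What each approach buys: yours yields the lexicographic case actually used in this paper with a transparent link to $h_{top}$ and to the $(\beta,\alpha)$ formulas of Section~3, at the price of (i) strengthening the hypothesis --- you must assume $(k_+,k_-)$ is realized as the kneading invariant of a prime expansive Lorenz map with $\beta=e^{h_{top}}$, whereas~[2] needs only the symbolic admissibility of the pair relative to base $\beta$ --- and (ii) outsourcing the minimality clause wholesale to~[7], so the proof is not self-contained where it matters most. The alternating-order half, handled by $z\mapsto-z$, remains a sketch (the sign bookkeeping in the kneading matrix changes when one branch reverses orientation, and ``smallest positive'' does not transform into ``largest negative'' without checking there are no spurious roots), but that case is never invoked in this paper, so the gap is harmless here.
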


\par According to the Lemma 6, we can calculate the $\beta$ of admissible kneading invariants. What follows we will put forward an expression of $\alpha$ through the iteration of any given $x\in[0,1]$.
\begin{theorem}
Let $T_{\beta,\alpha}(x)=\beta x+\alpha$ mod 1 be a linear mod 1 transformation, $1<\beta\leq2$ and $0\leq\alpha\leq2-\beta$. $\forall x\in[0,1]$, denote $k(x)=(x_{1},x_{2}\cdots x_{n},\cdots)$ as the kneading sequence of $x$, then $$\alpha=(\beta-1)(\sum_{i=1}^{\infty}\frac{x_{i}}{\beta^{i}}-x).$$

\begin{proof}\rm
$\forall x\in[0,1]$, we have
 $$ T(x)=\beta x+\alpha-x_{1},T^{2}(x)=\beta T(x)+\alpha-x_{2}=\beta^{2}x+(\beta+1)\alpha-(\beta x_{1}+x_{2})$$
 By induction, we have $$T^{n}(x)=\beta^{n}x+(\beta^{n-1}+\cdots+\beta+1)\alpha-\beta^{n}(\frac{x_{1}}{\beta}+\frac{x_{2}}{\beta^{2}}+\cdots+\frac{x_{n}}{\beta^{n}})
 $$
i.e.,$$T^{n}(x)=\beta^{n}x+\alpha\beta^{n}\sum_{i=1}^{n}\frac{1}{\beta^{i}}-\beta^{n}\sum_{i=1}^{n}\frac{x_{i}}{\beta^{i}}
 $$
 Since $[0,1]$ is compact interval, $\{T^{n}(x)\}_{n=1}^{\infty}$ admits accumulation point. There are convergent subsequence $\{T^{n_{k}}(x)\}$ such that $T^{n_{k+1}}(x)-T^{n_{k}}(x)=\varepsilon_{k}(x)\rightarrow0$ and $n_{k+1}-n_{k}\rightarrow\infty $ as $k\rightarrow\infty$. Then we can have:
 \[\varepsilon_{k}(x):=T^{n_{k+1}}(x)-T^{n_{k}}(x)\]
\[=(\beta^{n_{k+1}}x+\alpha\beta^{n_{k+1}}\sum_{i=1}^{n_{k+1}}\frac{1}{\beta^{i}}-\beta^{n_{k+1}}\sum_{i=1}^{n_{k+1}}\frac{x_{i}}{\beta^{i}})
-(\beta^{n_{k}}x+\alpha\beta^{n_{k}}\sum_{i=1}^{n_{k}}\frac{1}{\beta^{i}}-\beta^{n_{k}}\sum_{i=1}^{n_{k}}\frac{x_{i}}{\beta^{i}})\]
\[=(\beta^{n_{k+1}}-\beta^{n_{k}})x+\alpha(\beta^{n_{k+1}-1}+\cdots+\cdots\beta^{n_{k}})
-(\beta^{n_{k+1}}\sum_{i=1}^{n_{k+1}}\frac{x_{i}}{\beta^{i}}-\beta^{n_{k}}\sum_{i=1}^{n_{k}}\frac{x_{i}}{\beta^{i}})\]
 \[=(\beta^{n_{k+1}}-\beta^{n_{k}})x+\alpha\beta^{n_{k}}(1+\beta+\cdots+\beta^{n_{k+1}-n_{k}-1})\]
 \[-(\beta^{n_{k+1}}-\beta^{n_{k}})\sum_{i=1}^{n_{k}}\frac{x_{i}}{\beta^{i}}-\beta^{n_{k+1}}\sum_{i=n_{k}+1}^{n_{k+1}}\frac{x_{i}}{\beta^{i}}\]
Divided $(\beta^{n_{k+1}}-\beta^{n_{k}})$ on both sides of the equality, we obtain
$$\frac{\varepsilon_{k}(x)}{\beta^{n_{k+1}}-\beta^{n_{k}}}=x+\frac{\alpha}{1-\beta}-\sum_{i=1}^{n_{k}}\frac{x_{i}}{\beta^{i}}
-\frac{1}{1-\beta^{n_{k}-n_{k+1}}}\sum_{i=1}^{n_{k}}\frac{x_{i}}{\beta^{i}}
$$
Then let $k\rightarrow\infty$, we can have:
$$0=x+\frac{\alpha}{1-\beta}-\sum_{i=1}^{\infty}\frac{x_{i}}{\beta^{i}},$$
which is equivalent to
$$\alpha=(\beta-1)(\sum_{i=1}^{\infty}\frac{x_{i}}{\beta^{i}}-x).
$$
\end{proof}
\end{theorem}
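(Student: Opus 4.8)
The plan is to read the value of $\alpha$ off the long-term behaviour of the orbit of a single point $x$. First I would record the one-step identity that comes straight from the definition of $T=T_{\beta,\alpha}$: for any $x\in[0,1]$ we have $0\le \beta x+\alpha\le \beta+\alpha\le 2$, so $\lfloor \beta x+\alpha\rfloor\in\{0,1\}$, and this integer is exactly the first kneading digit $x_{1}$ (it is $0$ when $x<c=(1-\alpha)/\beta$ and $1$ when $x>c$, and at a preimage of $c$ one takes the relevant one-sided value). Hence $T(x)=\beta x+\alpha-x_{1}$.

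Next I would iterate this. Since $k\bigl(T^{j}(x)\bigr)=\sigma^{j}k(x)$, substituting $T^{j}(x)$ for $x$ in the one-step identity and inducting on $n$ gives
$$T^{n}(x)=\beta^{n}x+\alpha\sum_{i=0}^{n-1}\beta^{i}-\sum_{i=1}^{n}\beta^{\,n-i}x_{i},$$
and hence, after dividing by $\beta^{n}$,
$$\frac{T^{n}(x)}{\beta^{n}}=x+\alpha\sum_{i=1}^{n}\frac{1}{\beta^{i}}-\sum_{i=1}^{n}\frac{x_{i}}{\beta^{i}}.$$
The last step is to let $n\to\infty$. Because $T$ maps $[0,1]$ into itself, $0\le T^{n}(x)\le 1$ for every $n$, and since $\beta>1$ the left-hand side tends to $0$. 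On the right, $\sum_{i=1}^{n}\beta^{-i}\to(\beta-1)^{-1}$, while $\sum_{i=1}^{n}x_{i}\beta^{-i}$ converges (by comparison with the geometric series, since $x_{i}\in\{0,1\}$) to $\sum_{i=1}^{\infty}x_{i}\beta^{-i}$. Passing to the limit gives
$$0=x+\frac{\alpha}{\beta-1}-\sum_{i=1}^{\infty}\frac{x_{i}}{\beta^{i}},$$
which rearranges to the asserted formula.

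The argument is essentially mechanical, so I do not expect a genuine obstacle; the points that deserve a little care are (i) the well-definedness of $x_{1}$, and hence of the whole derivation, at points of $C(f)$ — there one should either work with $x\notin C(f)$ and extend the formula by the one-sided continuity of $x\mapsto\sum_{i}x_{i}\beta^{-i}$ along $C(f)$, or verify directly that the two one-sided kneading sequences at such a point give the same value of the right-hand side; and (ii) the interchange of the limit with the infinite series, which is immediate here because everything converges absolutely. As a sanity check one notes that the right-hand side is forced to be independent of $x$, consistent with $\alpha$ being a constant attached to $T$; taking $x=0$ in particular recovers the familiar identity $\alpha/(\beta-1)=\sum_{i\ge1}\beta^{-i}x_{i}$ for the itinerary of $0$.
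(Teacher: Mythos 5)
Your proof is correct, and it shares the paper's starting point -- the one-step identity $T(x)=\beta x+\alpha-x_{1}$ and the induced closed form $T^{n}(x)=\beta^{n}x+\alpha\sum_{i=0}^{n-1}\beta^{i}-\sum_{i=1}^{n}\beta^{n-i}x_{i}$ -- but it extracts the limit in a genuinely simpler way. The paper passes to a convergent subsequence $T^{n_{k}}(x)$ with $n_{k+1}-n_{k}\to\infty$, forms the differences $\varepsilon_{k}(x)=T^{n_{k+1}}(x)-T^{n_{k}}(x)$, and divides by $\beta^{n_{k+1}}-\beta^{n_{k}}$; you instead divide the closed form directly by $\beta^{n}$ and use only that the orbit stays in $[0,1]$ while $\beta>1$, so $T^{n}(x)/\beta^{n}\to 0$. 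Your route avoids the compactness and subsequence machinery entirely (and, incidentally, sidesteps a sign slip in the paper's displayed intermediate identity, where $\alpha/(1-\beta)$ appears in place of $\alpha/(\beta-1)$ even though the final formula is stated correctly). The only thing the paper's detour could conceivably buy is robustness when $\beta^{n}$ does not grow, but the hypothesis $\beta>1$ is explicit in the statement, so nothing is lost. Your added remarks on well-definedness of $x_{1}$ at preimages of $c$ and on the consistency check at $x=0$ address points the paper leaves implicit.
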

The proof is completed. $\hfill\square$
\begin{remark}\rm
\
\begin{enumerate}[(1)]
\item Since $k(0)=\sigma(k_+)$, by Theorem 2, $\alpha$ can be calculated in terms of $k_+$ and $\beta$:  $$ \alpha=(\beta-1)\sum_{i=1}^{\infty}\frac{k_i(0)}{\beta^{i}},$$ where $k_{i}(0)$ is the $i$th digit of $k(0)=\sigma(k_{+})$.
\item If $k_+=10^{\infty}$, i.e., $k(0)=0^{\infty}$ is a fixed point, then  $\alpha=(\beta-1)\sum_{i=1}^{\infty}\frac{k_i(0)}{\beta^{i}}=0$.
\item Since $k(1)=\sigma(k_-)$,  $\alpha$ can be calculated in terms of $k_-$ and $\beta$:  $$ \alpha=(\beta-1)(\sum_{i=1}^{\infty}\frac{k_i(1)}{\beta^{i}}-1),$$ where $k_{i}(1)$ is the $i$th digit of $k(1)=\sigma(k_{-})$.

\item If $k_-=01^{\infty}$, i.e., $k(1)=1^{\infty}$ is a fixed point, then  $\alpha=(\beta-1)(\sum_{i=1}^{\infty}\frac{1}{\beta^{i}}-1)=(\beta-1) ( \frac{1}{\beta}  \frac{1}{1-\frac{1}{\beta}}-1)=2-\beta$.
\end{enumerate}
\end{remark}

\subsection{Examples}
\noindent Let $K_f=(k_+, k_-)$ be an expansive  prime kneading invariant, it is conjugate to a linear mod 1 transformation $T_{\beta, \alpha}(x)=\beta x + \alpha$ $mod \ 1$. So there is a point $(\beta, \alpha)$ in the parametric space corresponds to $K_f$. We have known a lot about $\beta$, the topological entropy $h(f)$ of $f$ equals $\ln \beta$, $\ln \beta$ is the exponential growth rate of the laps number $I(f^{n})$ or the complexity function $C(n)$, and accoding to Lemma 6, $\beta$ equals to $1/z$ where $z$ is the smallest positive root of $K(z)$. But we know very little about $\alpha$. If $\beta=1$, $\alpha$ is the rotation number of $f$. In the factorization theorem, most of the factors are periodic kneading invariants, which are also subshift of finite types.
\begin{example}
 \rm Let $K_1=(k_{+},k_{-})=((10)^{\infty},(011)^{\infty})$. According to Lemma 6, we can obtain
\begin{center}
         {$K_1(z)=(1-z-z^5)(1+z^6+z^{12}+z^{18}+z^{24}+\cdots)=(1-z-z^5)/(1-z^6)$. }
\end{center}The power series $K(z)$ has the smallest positive zero $z=1/\beta \approx 0.7548$ in $(0,1)$, $\beta \approx 1.3247$. On the other hand, the kneading invariant corresponds a subshift of finite type,  the transition matrix is
\begin{center}
$A_{1}=\left(
\begin{array}{ccc}
0 & 1 & 1 \\
1 & 0 & 0 \\
0 & 1 & 0 \\
\end{array}
\right).$
\end{center}
We calculate the eigenvalues of the matrix are approximately $1.3247$, and $-0.66 \pm 0.56 i$. It is obvious that the largest eigenvalue equals to $\beta$.
Since $k(0)=(01)^{\infty}$, according to  Theorem 2, we know that:
\begin{center}
{$ \alpha_1=\frac{\beta-0}{\beta+\beta^{2}}=\frac{1}{1+\beta}\approx 0.4301597$.}

\end{center}

\end{example}
\par Note that the transition matrix of an admissible prime periodic kneading invariant corresponds to $(\beta, \alpha)$. The eigenvalues and eigenvectors capture the essential properties of a matrix. $\beta$  is the spectral radius of the transition matrix, it is interesting to ask wether the other two eigenvalues may relate to $\alpha$.

\begin{example}\rm
Let $K_2=(k_{+},k_{-})=((100)^{\infty},(01)^{\infty})$. We can observe $K_2$ is the dual of $K_1$, i.e.,  $K_2$  is obtained  via interchange  $0$ and $1$ in $K_1$.  By Lemma 6, we get
	\begin{center}
		{$K(z)=(1-z-z^5)/(1-z^6)$, }
	\end{center}
which is the same as for $K_1(z)$.  So they have the same zeros, and the same entropy $\ln\beta$. As a subshift of finite type, the transition matrix corresponds to $K_2$ is
\begin{center}
$A_{2}=\left(
\begin{array}{ccc}
0 & 1 & 0 \\
0 & 0 & 1 \\
1 & 1 & 0 \\
\end{array}
\right)$.
\end{center}
	
	The eigenvalues of $A_{2}$ are the same as $A_1$, and the largest eigenvalue equals to $\beta$.
	Since $k(0)=(001)^{\infty}$, using Theorem 2, we obtain:
	\begin{center}
		{$ \alpha_2= \frac{\beta}{\beta+\beta^{2}+\beta^{3}}\approx 0.245122$. }
		
	\end{center}

\end{example}
\
As we can see from the examples, for two (dual) admissible expansive periodic kneading invariants $K_1$ and $K_2$, we have $K_1(z)=K_2(z)$, and the transition matrixes admit the same eigenvalues. So they  have the same entropy, but different intercepts:$\alpha_1 \approx 0.4301597$ and $\alpha_2 \approx 0.245122$. So $\alpha$ is not determined by the eigenvalues of transition matrix. Remember that when $\beta=1$, the topological entropy of $T_{\beta, \alpha}$ is zero, $T_{\beta, \alpha}$ is conjugated to a circle homeomorphism, and $\alpha$ is the rotation number of the transformation. Notice that the rotation mumber of an orbit $orb(x)$ equals to the frequency of $1$ in $k(x)$ [3].

We denote by $e:
\mathbb{R} \to \mathbb{S}^1=\{z\in \mathbb{C} :|z|=1\}$ the natural
covering map $e(x)=\exp(2\pi ix)$. Let $f$ be a Lorenz map, not
necessarily expansive. There exists a map $F: \mathbb{R} \to
\mathbb{R}$ such that $e\circ F=f \circ e$ and $F(x+1)=F(x)+1$. $F$
is called a degree one lifting of $f$. Furthermore, if $F(0)=f(0)$,
then there exists a unique such lifting.

The {\em rotation number} of $f$ at $x$ is defined by
$$
\rho(x)= \lim_{n \to \infty} \frac{F^n(x)-x}{n}
$$
if the limit exists.
It is known that the set of all rotation numbers $\rho(x)$ of $f$ is
an interval and that this interval is reduced to a singleton when
$f(0)=f(1)$ . The rotation number is tightly relate to
the number of returns of $x$ into the interval $(c,\ 1]$, defined by
\begin{equation}\label{Returnnumber}
m_n(x)= \# \left\{0\le i <n: f^i(x) \in (c,\ 1] \right\},
\end{equation} and
$$
\rho(x)= \lim_{n \to \infty} \frac{m_n(x)}{n},
$$
if the limit exists.

A sequence ${\bf a}=a_1a_2a_3\cdots \in \Omega_2$ is said to be rotational if there exists $\rho \in [0, 1]$ such that ${\bf a}$ is the kneading sequence of $0$ under the rotation $R_{\rho}(x)=x+\rho \ \ mod\ 1$.  If ${\bf a}$ is rotational, it is  shift minimal in the sense  $\sigma^n({\bf a}) \ge {\bf a}$ for all positive integer $n$. Fix a rotation sequence ${\bf a}$, let $B_{{\bf a}}$ be the collection of $\beta$ such that there is linear mod one transformation $T_{\beta, \alpha}$ such that ${\bf a}$ is the kneading sequence of $0$. For each $\beta \in  B_{{\bf a}}$,  by Remark after Theorem *, we can obtain  $\alpha=(\beta-1)\sum_{i=1}^{\infty} \frac{a_i}{\beta^i}$. The following  Proposition tell us that when $\beta \searrow 1$, $\alpha$ will approaches to the rotation number of ${\bf a}$.
\begin{proposition} Let ${\bf a}$ be a rotation sequence with rotation number $\rho$. For each $\beta \in  B_{{\bf a}}$, we have
\begin{equation}\label{alpha}
\alpha: =\alpha(\beta)=(\beta-1)\sum_{i=1}^{\infty} \frac{a_i}{\beta^i}, \ \ and \ \ \  \lim_{\beta \searrow 1} \alpha(\beta)=\rho.
\end{equation}

\begin{proof}\rm
Since ${\bf a}$ is a rotation sequence with rotation number $\rho$, ${\bf a}$ is the kneading sequence of $0$ under the action of the rotation $R_{\rho}(x)=x + \rho$ (mod 1), ${\bf a}$ is shift minimal, and the rotation number of ${\bf a}$ is $\rho$, i.e. $\lim_{n \to \infty}\frac{\sum_{i=1}^{n}a_{i}}{n}$. Fix $\beta$ in  $B_{{\bf a}}$, take $x=0$ in  Theorem 2, we know that $\alpha: =\alpha(\beta)=(\beta-1)\sum_{i=1}^{\infty} \frac{a_i}{\beta^i}$.

Let $u(x)=\sum_{n=0}^{\infty}a_{n+1}x^{n}$, then $u(x)=\frac{1-x}{x}\sum_{n=1}^{\infty}a_{n}x^{n}$ and $u(\frac{1}{\beta})=\alpha(\beta)$. In what follows we show that $\lim_{x\rightarrow1^{-}}u(x)=\rho$, which implies $\lim_{\beta\rightarrow1^{+}}\alpha(\beta)=\rho$.

\noindent Since $a_{n}=1$ or $a_{n}=0$, we know that the radius of convergence of the power series $u(x)=(1-x)\sum_{n=1}^{\infty}a_{n}x^{n-1}=\sum_{n=1}^{\infty}(a_{n+1}-a_{n})x^{n}$ is 1, so $u(x)=(1-x)\sum_{n=1}^{\infty}a_{n+1}x^{n}$ converges uniformly in $[0,1)$.

\noindent Observe that for $x\in[0,1)$, $\frac{1}{1-x}=\sum_{n=0}^{\infty}x^{n}$, then we can have
\begin{equation}
\frac{1}{(1-x)^{2}}=\sum_{n=0}^{\infty}(n+1)x^{n}.
\end{equation}
By direct calculation we obtain $\frac{u(x)}{(1-x)^{2}}=(1+x+x^{2}+\cdots)(a_{1}+a_{2}x+a_{3}x^{2}+\cdots)$
\[=a_{1}+(a_{1}+a_{2})x+(a_{1}+a_{2}+a_{3})x^{2}+\cdots\]
\[=\sum_{n=0}^{\infty}S_{n+1}x^{n}, \]
where $S_{n+1}=\sum_{i=1}^{n+1}a_{i}$. It follows that $u(x)=(1-x)^{2}\sum_{n=0}^{\infty}S_{n+1}x^{n}$ for $x\in[0,1)$. Since the rotation number of ${\bf a}$ is $\rho$, we know that $\lim_{n\rightarrow\infty}\frac{S_{n+1}}{n+1}=\rho$.

\noindent $\forall\varepsilon>0$, $\exists N>0$, when $n>N$, $|\frac{S_{n+1}}{n+1}-\rho|<\frac{\varepsilon}{2}$, then
$$
|u(x)-\rho|=|(1-x)^{2}\sum_{n=0}^{\infty}S_{n+1}x^{n}-\rho|\leq|(1-x)^{2}\sum_{n=0}^{\infty}S_{n+1}x^{n}|+
|(1-x)^{2}\sum_{n=N+1}^{\infty}S_{n+1}x^{n}-\rho|
$$
Using (3.11), we have
\[|(1-x)^{2}\sum_{n=N+1}^{\infty}S_{n+1}x^{n}-\rho|=
|(1-x)^{2}\sum_{n=N+1}^{\infty}\frac{S_{n+1}}{n+1}(n+1)x^{n}-\rho|\]
\[=|\frac{\sum_{n=N+1}^{\infty}\frac{S_{n+1}}{n+1}(n+1)x^{n}-\rho\sum_{n=0}^{\infty}(n+1)x^{n}}
{\sum_{n=0}^{\infty}(n+1)x^{n}}|\]
\[=|\frac{\sum_{n=N+1}^{\infty}(\frac{S_{n+1}}{n+1}-\rho)(n+1)x^{n}-\rho\sum_{n=0}^{N}(n+1)x^{n}}
{\sum_{n=0}^{\infty}(n+1)x^{n}}|\]
\[<|\frac{\frac{\varepsilon}{2}\sum_{n=N+1}^{\infty}(n+1)x^{n}}{\sum_{n=0}^{\infty}(n+1)x^{n}}|
+\rho(1-x)^{2}\sum_{n=0}^{N}(n+1)x^{n}\]
\[<\frac{\varepsilon}{2}+\rho(1-x)^{2}\sum_{n=0}^{N}(n+1)x^{n}\]
Combine the equations above, we can obtain
\[|u(x)-\rho|<\frac{\varepsilon}{2}+(1+\rho)(1-x)^{2}\sum_{n=0}^{N}(n+1)x^{2}\]
\[<\frac{\varepsilon}{2}+(1+\rho)(1-x)\frac{(N+1)(N+2)}{2}\]
Choose $\delta=|1-x|<\frac{\epsilon}{(1+\rho)(N+1)(N+2)}$, then we get $|u(x)-\rho|<\varepsilon$, which indicates $\lim_{x \to 1^{-}}u(x)=\rho$, the proof is completed. $\hfill\square$
\end{proof}
\end{proposition}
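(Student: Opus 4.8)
\emph{Proof plan.} The natural plan is to substitute $x=1/\beta$, which turns the one-sided limit $\beta\searrow 1$ into $x\to 1^{-}$. Writing $\beta-1=(1-x)/x$ and $\sum_{i\ge 1}a_i\beta^{-i}=\sum_{i\ge 1}a_ix^{i}$ gives
$$\alpha(\beta)=\frac{1-x}{x}\sum_{i\ge 1}a_ix^{i}=(1-x)\sum_{n\ge 0}a_{n+1}x^{n}=:u(x),$$
so it suffices to prove $\lim_{x\to 1^{-}}u(x)=\rho$. This is an Abelian-type assertion whose only nontrivial ingredient is that the Cesàro averages of the coefficient sequence converge, $\lim_{n\to\infty}\frac1n\sum_{i=1}^{n}a_i=\rho$. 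That is exactly the fact recalled above (citing [3]) that the frequency of the symbol $1$ in a rotation sequence equals its rotation number, and it applies here because $\mathbf{a}$ is by definition the itinerary of $0$ under $R_\rho$. So the first step is merely to record $\lim_{n}S_n/n=\rho$ with $S_n=\sum_{i=1}^{n}a_i$.

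The second step is the algebraic identity $u(x)=(1-x)^{2}\sum_{n\ge 0}S_{n+1}x^{n}$ for $x\in[0,1)$. Since $0\le a_i\le 1$, the series $\sum a_{n+1}x^{n}$ and $\sum S_{n+1}x^{n}$ converge absolutely on $[0,1)$ (note $S_{n+1}\le n+1$, so the latter still has radius of convergence $1$), and the identity then follows from the Cauchy product $\big(\sum_{m\ge 0}x^{m}\big)\big(\sum_{n\ge 0}a_{n+1}x^{n}\big)=\sum_{n\ge 0}S_{n+1}x^{n}$ on multiplying through by $(1-x)^{2}$, or equally well from the telescoping $(1-x)\sum_{n\ge 0}S_{n+1}x^{n}=\sum_{n\ge 0}a_{n+1}x^{n}$. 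This rearrangement is the one place requiring any care, and it is harmless since everything converges absolutely for a fixed $x<1$.

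The third step is a routine $\varepsilon$-argument. Combining the identity with $\rho=\rho(1-x)^{2}\sum_{n\ge 0}(n+1)x^{n}$ gives
$$u(x)-\rho=(1-x)^{2}\sum_{n\ge 0}(n+1)\Big(\frac{S_{n+1}}{n+1}-\rho\Big)x^{n}.$$
Given $\varepsilon>0$, choose $N$ with $\big|\frac{S_{n+1}}{n+1}-\rho\big|<\varepsilon/2$ for all $n>N$. The tail $n>N$ then contributes at most $\frac{\varepsilon}{2}(1-x)^{2}\sum_{n\ge 0}(n+1)x^{n}=\frac{\varepsilon}{2}$, while the head $0\le n\le N$ is $(1-x)^{2}$ times a fixed polynomial in $x$, hence bounded by $(1+\rho)(1-x)\frac{(N+1)(N+2)}{2}$, which is $<\varepsilon/2$ as soon as $|1-x|<\delta:=\varepsilon\big/\big[(1+\rho)(N+1)(N+2)\big]$. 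Therefore $|u(x)-\rho|<\varepsilon$ for $x\in(1-\delta,1)$, so $\lim_{x\to 1^{-}}u(x)=\rho$, hence $\lim_{\beta\searrow 1}\alpha(\beta)=\rho$. I do not expect a genuine obstacle: the statement is an instance of the classical Frobenius/Abel limit theorem for $(C,1)$-summable sequences, and the only thing to get right is the bookkeeping in the middle identity so that the head-plus-tail split falls out cleanly.
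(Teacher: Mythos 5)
Your proposal is correct and follows essentially the same route as the paper: substitute $x=1/\beta$, rewrite $\alpha$ as $u(x)=(1-x)\sum_{n\ge0}a_{n+1}x^{n}$, establish $u(x)=(1-x)^{2}\sum_{n\ge0}S_{n+1}x^{n}$ via the Cauchy product, and run the standard Frobenius-type head/tail $\varepsilon$-split using $\frac{1}{(1-x)^{2}}=\sum_{n\ge0}(n+1)x^{n}$ and $S_{n}/n\to\rho$. Your single-line decomposition $u(x)-\rho=(1-x)^{2}\sum_{n\ge0}(n+1)\bigl(\tfrac{S_{n+1}}{n+1}-\rho\bigr)x^{n}$ is in fact a tidier bookkeeping of the same estimate the paper carries out.
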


\begin{remark}\rm
\
\begin{enumerate}[(1)]
\item If $\rho =\frac{k}{m}$ is rational, then $R_{\rho}(x)=x+\rho \ \ mod\ 1$ is a rational rotation, ${\bf a}$ is $m-$periodic, we can get $\lim_{\beta \searrow 1} \alpha(\beta)=\rho$ by direct calculation. In fact,
 $\alpha(\beta)=(\beta-1)(\frac{a_1}{\beta}+ \frac{a_2}{\beta^2}+\cdots+ \frac{a_m}{\beta^m})(1+\frac{1}{\beta^m} +\frac{1}{\beta^{2m}}+\frac{1}{\beta^{3m}}+\cdots)$
\[=(\beta-1)(\frac{a_1}{\beta}+ \frac{a_2}{\beta^2}+\cdots+ \frac{a_m}{\beta^m})\frac{1}{1-\beta^{-m}}\]
\[= (\beta-1)(\frac{a_1}{\beta}+ \frac{a_2}{\beta^2}+\cdots+ \frac{a_m}{\beta^m}) \frac{\beta^m}{(\beta-1)(1+\beta+\cdots+\beta^{m-1})}\]
\[= (\frac{a_1}{\beta}+ \frac{a_2}{\beta^2}+\cdots+ \frac{a_m}{\beta^m})\frac{\beta^m}{1+\beta+\cdots+\beta^{m-1}}.\]
It follows that $\lim_{\beta\rightarrow1^{+}}\alpha(\beta)=\frac{k}{m}=\rho$.
\item If ${\bf a}$ is not rotational, then we can not let $\beta \to 1$ because it may violate the Hubbard-Sparrow expansive condition.
\end{enumerate}
\end{remark}

\section{Classification of expansive Lorenz maps}
For convenience, we denote $L_{r}$ be the collection of expansive renormalizable Lorenz maps. In 1990, John H. Hubbard and Colin T. Sparrow [9] had put forward the classification of topologically expansive Lorenz maps. They showed that topologically expansive Lorenz maps can be described up to topological conjugacy by their kneading invariants. Our Theorem gives an extension of [9], we can not only justify whether two expansive Lorenz maps are topological conjugate or not, but also describe the distance between two expansive Lorenz maps which are not conjugate.
\begin{lemma}{\bf([9,Theorem 1])}
An expansive Lorenz map $f$ is conjugate to a $\beta$-transformation if and only if $f$ is finitely renormalizable and each renormalization of $f$ is periodic.
\end{lemma}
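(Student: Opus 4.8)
The plan is to prove both directions by assembling the Factorization Theorem with Parry's linearization of prime maps and the phase-locking renormalization theory for linear mod one transformations. Throughout I identify an expansive Lorenz map with its kneading invariant $K(f)=(k_+,k_-)$, which is legitimate by Lemma 3.

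First I would treat the ``if'' direction. Assume $f$ is finitely renormalizable and every renormalization of $f$ is periodic. By the Factorization Theorem, $K(f)=W_1*\cdots*W_m*K^*$ with $K^*=R^mK(f)$ a prime expansive kneading invariant and each $W_i^\infty=(w_{i,+}^\infty,w_{i,-}^\infty)$ a periodic prime kneading invariant. Since every step from $R^{i-1}f$ to $R^if$ is periodic, the argument inside the proof of the Factorization Theorem identifies each $W_i^\infty$ with a well-ordered periodic orbit of some rational rotation number $p_i/q_i$; in particular $w_{i,+}$ and $w_{i,-}$ may be taken of a common period $q_i\ge 2$. Because $K^*$ is prime and expansive, the associated prime expanding Lorenz map is strongly transitive, so by Parry's theorem $K^*=K(T_{\beta_*,\alpha_*})$ for some $(\beta_*,\alpha_*)\in\Delta_{pe}$. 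It then remains to establish the closure property: if $W^\infty$ is a well-ordered periodic orbit of rational rotation number $p/q$ and $L$ is the kneading invariant of a linear mod one transformation of slope $\gamma$, then $W*L$ is again the kneading invariant of a linear mod one transformation, of slope $\gamma^{1/q}$. Granting this, a finite downward induction peeling off $W_m,W_{m-1},\dots,W_1$ shows $K(f)=K(T_{\beta,\alpha})$ for some $(\beta,\alpha)\in\Delta$ (the lemma that $W*L$ is expansive whenever $L$ is keeps every intermediate invariant expansive), and then $f$ is topologically conjugate to $T_{\beta,\alpha}$ by the uniqueness clause of Lemma 3.

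The closure property is the heart of the matter, and I expect it to be the main obstacle. The natural approach is concrete: put $\beta:=\gamma^{1/q}$, so $\beta\le 2^{1/2}<2$ is an admissible slope, and let $\alpha$ be the value produced by the formula of Theorem 2 from the kneading sequence of $0$ inside $W*L$; one checks that $(\beta,\alpha)\in\Delta$, so $T_{\beta,\alpha}$ is a genuine Lorenz map. Its periodic orbit of rotation number $p/q$ surrounds the discontinuity in the combinatorial pattern coded by $W$, and since each branch of $T_{\beta,\alpha}$ has constant slope $\beta$, the $q$-th iterate restricted to either side of this orbit has constant slope $\beta^q=\gamma$; rescaling the interval spanned by the orbit back to $[0,1]$, the corresponding renormalization of $T_{\beta,\alpha}$ is a linear mod one transformation of slope $\gamma$, and one tunes the discontinuity so that its kneading invariant is $L$, whence $K(T_{\beta,\alpha})=W*L$. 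This is exactly the phase-locking renormalization of [1] and [11] (see also [3],[6]) read in reverse, so one may instead simply cite those results; the one genuinely delicate point in the concrete construction is the verification that the intercept $\alpha$ lands in the admissible range $[0,2-\beta]$, which I am deferring as a routine (if fussy) computation.

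Finally, the ``only if'' direction. Suppose $f$ is conjugate to $T_{\beta,\alpha}$. If $\beta=1$ then $f$ is an expansive, hence irrational, rotation, which has no periodic points; thus $\kappa=\infty$, $f$ is prime, and the conclusion holds vacuously. Assume $\beta>1$. A linear mod one transformation can be renormalized only periodically ([1],[11]; see also [3],[6]), so every renormalization of $f$ is periodic. To rule out infinite renormalizability, suppose $K(f)=W_1*W_2*\cdots$; each $W_i^\infty$ is a well-ordered rational block of a common period $q_i\ge 2$, and substituting such a block multiplies both branch return times by $q_i$ and hence divides the topological entropy by $q_i$. By associativity of the $*$-product, $h(f)=h(W_{n+1}*W_{n+2}*\cdots)/(q_1q_2\cdots q_n)\le(\log 2)/2^n$ for every $n$, so $h(f)=0$ and $\beta=e^{h(f)}=1$, a contradiction. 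Hence $f$ is finitely renormalizable with every renormalization periodic, which completes the equivalence.
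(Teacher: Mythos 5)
This statement is quoted by the paper as Lemma~[9, Theorem~1] (it is Glendinning's theorem on conjugation of Lorenz maps by $\beta$-transformations) and the paper gives no proof of it, so there is no in-paper argument to compare yours against; I can only assess your reconstruction on its own terms. Your overall architecture is the standard one and the ``only if'' direction is sound: the case $\beta=1$ is handled correctly, the periodicity of every renormalization follows from the cited phase-locking results once you add the (easy, but unstated) observation that the periodic renormalization of a linear mod one transformation is again a linear mod one transformation of slope $\beta^{q}$, and the entropy-division argument $h(f)=h(R^{n}f)/(q_{1}\cdots q_{n})\le(\log 2)/2^{n}$ correctly rules out infinite renormalizability when $\beta>1$.

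The genuine gap is in the ``closure property'', which you rightly identify as the heart of the ``if'' direction but do not actually establish. Your concrete construction is circular: Theorem~2 of the paper computes $\alpha$ \emph{from} the itinerary of a point under an already given $T_{\beta,\alpha}$, so defining $\alpha$ by plugging the abstract sequence $\sigma(W*L)_{+}$ into that formula and then asserting that the itinerary of $0$ under the resulting $T_{\beta,\alpha}$ is that very sequence is precisely the admissibility/realizability statement that needs proof (this is Parry's criterion for which sequences arise as kneading sequences of linear mod one maps, and it is where the constraint $\alpha\in[0,2-\beta]$ and the shift-inequalities must be checked). The phrase ``one tunes the discontinuity so that its kneading invariant is $L$'' makes this worse rather than better, since $\alpha$ has already been fixed by the formula --- either $\alpha$ is determined and the itinerary claim must be verified, or $\alpha$ is a free parameter and you must show a single choice realizes both the well-ordered $W$-pattern around the discontinuity and the inner invariant $L$ simultaneously; you cannot have both. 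Falling back on the citation to [1], [11], [3], [6] does close the argument, but then your proof reduces to citing essentially the same literature the paper cites for the whole lemma, and the one step you flag as ``routine (if fussy)'' is in fact the entire content of Glendinning's theorem.
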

 As for $f\in L_{r}$, periodic renormalization is relevent to the conjugacy problem. Glendinning[9] showed that an expansive Lorenz map is conjugate to a $\beta$-transformation if its renormalizations are all periodic renormalizations.
\begin{proposition}\label{prime}
Let $f$ be a renormalizable expansive Lorenz map with minimal CICS $D$. We have:
\begin{enumerate}[(1)]
\item The map $\phi_{1}:=f|_{D}:D\rightarrow D$ which means $f$ restricted on $D$ is prime.
\item  If $D$ is a Cantor set, then restriction of $f$ on $D$ corresponds to a pair $(\beta_{1},\alpha_{1})$ such that $\phi_{1}:=f|_{D}:D\rightarrow D$ is conjugated to $T_{\beta_{1},\alpha_{1}}$.
\item If $D=O$ where $O$ is the periodic orbit of minimal period. Then $\psi_{1}:=f|_{D}:D\rightarrow D$ corresponds to a pair $(1,\rho(f))$.
    \end{enumerate}
\end{proposition}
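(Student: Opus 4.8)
The plan is to read off the kneading invariant of the induced system $(D,f|_{D})$ from the Factorization Theorem, and then feed it into the realization and linearization results of Sections~2--3. Since $f$ is renormalizable, Lemma~2 (equivalently its combinatorial counterpart Lemma~5) gives $D\neq I$, so the trichotomy leaves two cases: $D$ is a Cantor set, or $D=O$. Write $K(f)=(k_{+},k_{-})$ and let $W_{1}=(w_{1,+},w_{1,-})$ be the first factor produced by the Factorization Theorem, so $K=W_{1}*RK$. Applying Lemma~4 to the completely invariant closed set $E=D$, the extreme points of $D$ about $c$ are the periodic points $w_{1,-}^{\infty}$ and $w_{1,+}^{\infty}$, and $\Omega_{D}:=\{k_{f}(x):x\in D\}=\Omega_{w_{1,+}^{\infty},w_{1,-}^{\infty}}=:\Omega_{W_{1}^{\infty}}$. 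Thus, up to the usual countable identification at preimages of $c$, $(D,f|_{D})$ is conjugate to the Lorenz shift on $\Omega_{W_{1}^{\infty}}$, whose kneading invariant is the periodic pair $W_{1}^{\infty}=(w_{1,+}^{\infty},w_{1,-}^{\infty})$.

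For (1) it is enough to show that $\Omega_{W_{1}^{\infty}}$ has no proper completely invariant closed subset. If $E$ is one, then, since $\Omega_{W_{1}^{\infty}}$ is itself a completely invariant closed subset of $\Omega_{k_{+},k_{-}}$ (Corollary~1), we get $\sigma(E)=E$ and $\sigma^{-1}(E)\cap\Omega_{k_{+},k_{-}}=\sigma^{-1}(E)\cap\Omega_{W_{1}^{\infty}}=E$, so $E$ is a completely invariant closed subset of $\Omega_{k_{+},k_{-}}$ contained in $D$; minimality of $D$ forces $E=\emptyset$ or $E=\Omega_{W_{1}^{\infty}}$. By Corollary~1 a renormalizable kneading space carries a proper completely invariant closed subset, so $\Omega_{W_{1}^{\infty}}$ is prime, i.e.\ $\phi_{1}=f|_{D}$ is prime. (When $D=O$ one may instead invoke the results of [3],[6] quoted in the proof of the Factorization Theorem: the well-ordered periodic orbit yields a prime shift.)

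For (2), suppose $D$ is a Cantor set. As established in the proof of the Factorization Theorem, $W_{1}^{\infty}$ is then admissible in the Hubbard--Sparrow sense, so by Lemma~3 there is an expansive Lorenz map $\phi_{1}$ with kneading invariant $W_{1}^{\infty}$, unique up to conjugacy and, by (1), prime. Since a prime expansive Lorenz map is strongly transitive, Parry's theorem [17] (equivalently Lemma~8, as a prime map is vacuously finitely renormalizable with all renormalizations periodic) shows $\phi_{1}$ is conjugate to a linear mod one transformation $T_{\beta_{1},\alpha_{1}}$. Moreover $\beta_{1}>1$: were $\beta_{1}=1$, $T_{\beta_{1},\alpha_{1}}$ would be an irrational rotation, whose kneading invariant is aperiodic, contradicting periodicity of $W_{1}^{\infty}$. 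Hence $\beta_{1}=\exp(h(\phi_{1}))$ is the reciprocal of the least positive root of the power series assigned to $W_{1}^{\infty}$ by Lemma~7, and $\alpha_{1}=(\beta_{1}-1)\sum_{i\geq 1}k_{i}(0)\beta_{1}^{-i}$ by Theorem~2 and the Remark following it; periodicity of $W_{1}^{\infty}$ also makes $0$ and $1$ periodic for $T_{\beta_{1},\alpha_{1}}$, so in fact $(\beta_{1},\alpha_{1})\in\Delta_{pp}$.

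For (3), suppose $D=O$. Then $f^{-1}(O)=O$, so $I\setminus O$ is forward invariant and $f$ cyclically permutes the finitely many complementary intervals of the well-ordered orbit $O$; by the results of [3],[6] recalled in the proof of the Factorization Theorem, $O$ has a rational rotation number $p/q$ with $(p,q)=1$, the cyclic permutation $\psi_{1}=f|_{O}$ has exactly the combinatorics of the orbit of $0$ under $R_{p/q}(x)=x+p/q\ (\mathrm{mod}\ 1)=T_{1,p/q}(x)$, so $\psi_{1}$ corresponds to $(1,p/q)$, and every orbit of $f$ visits $(c,1]$ with asymptotic frequency $p/q$, i.e.\ the rotation interval of $f$ collapses to the point $p/q$ so that $\rho(f)=p/q$ is well defined. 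Hence $\psi_{1}$ corresponds to $(1,\rho(f))$. The step I expect to be the main obstacle is the identification underlying (2): turning the Cantor-set dynamics $f|_{D}$ into a genuine expansive Lorenz map on an interval --- by collapsing the complementary intervals of $D$ --- with kneading invariant exactly $W_{1}^{\infty}$, so that Lemma~3, Parry's theorem and the formulae of Section~3 apply verbatim; granting this identification, the three assertions follow by routine bookkeeping with the trichotomy.
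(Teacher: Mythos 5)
Your proposal is correct and follows the same overall route as the paper: read off the induced system on $D$ as the Lorenz shift $\Omega_{w_{1,+}^{\infty},w_{1,-}^{\infty}}$ via Lemma~4, then apply primeness plus Parry/Hubbard--Sparrow in the Cantor case and the rotation-number description in the periodic case. The one place you genuinely diverge is part (1): the paper argues combinatorially that a further renormalization of $D=\Sigma_{w_+^{\infty},w_-^{\infty}}$ would produce subwords $w_+',w_-'$ inside $w_+^{\infty},w_-^{\infty}$, ``contradicting the definition of renormalization,'' whereas you argue that any nonempty completely invariant closed subset $E\subseteq\Omega_{W_1^{\infty}}$ is (using complete invariance of $D$ in $\Omega_{k_+,k_-}$, so that $\sigma b\in E\subseteq D$ forces $b\in D$ and hence $b\in E$) a completely invariant closed subset of the full shift contained in $D$, so minimality gives $E=D$ and Corollary~1 gives primeness. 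Your version is the more robust one: it makes explicit where the minimality of $D$ enters, which the paper's word-level argument leaves implicit (and which is essential, since a non-minimal completely invariant closed set, e.g.\ the one attached to a composed renormalization $W_1*W_2$, does carry a non-prime restriction). Parts (2) and (3), including the extra bookkeeping you supply ($\beta_1>1$, membership in $\Delta_{pp}$, collapse of the rotation interval to $p/q$), match the paper's proof and the arguments already present in the Factorization Theorem.
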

\begin{proof}\rm
(1)According to the Definition 1 and Theorem 1, $D=\Sigma_{w_{+}^{\infty},w_{-}^{\infty}}$ where $w_{+}$ and $w_{-}$ are finite words with total length longer than $1$. Suppose $D$ is renormalizable, then we can find $w_{+}^{'}$ and $w_{-}^{'}$ within $w_{+}^{\infty}$ and $w_{-}^{\infty}$, which is contradict to the definition of renormalization. Hence the map $f$ restricted on $D$ is prime.

\noindent(2)If $D$ is a Cantor set, then $D$ has infinite words. We have just proved $\phi_{1}:=f|_{D}:D\rightarrow D$ is prime, according to Remark 3, there is a one-to-one corresponding between prime expansive Lorenz map and $(\beta,\alpha)$. So $\phi_{1}$ is conjugate to $T_{\beta_{1},\alpha_{1}}$.

\noindent(3) If $D=O$ where $O$ is the periodic orbit of minimal period, then $D$ have only finite words, $w_{+}^{\infty}$ and $w_{-} ^{\infty}$ are on the same periodic orbit. So the entropy $h(\psi_{1})=0$, which means $\ln(\beta)=0$, and $\psi_{1}:=f|_{D}:D\rightarrow D$ corresponds to a pair $(1,\rho(f))$. $\hfill\square$

\end{proof}

{\bf Main Theorem.} \emph {Fix $0\leq m\leq\infty$. There is a one-to-one correspondence between $\mathcal{L}_m$ and $\mathcal{A}_m$. More precisely, for $f \in \mathcal{L}_m$, there is a sequence $A\in \mathcal{A}_m$ associated with $f$, and for each sequence $B \in \mathcal{A}_m $, there exists an $m$-renormalizable expansive Lorenz map $g$ associated with $B$. Two expansive Lorenz maps are topologically conjugated if and only if they associated with the same sequence. Moreover, the cluster of points in the sequence are complete topological invariants for expansive Lorenz maps.}

Now we are in a position to prove the main Theorem.

\begin{proof}\rm
Let $f$ be an $m$-renormalizable expansive Lorenz map with kneading invariant $K=(k_+, k_-)$, we try to find a sequence in $\mathcal{A}_m$ associated with it. If $m<\infty$, by the factorization Theorem, there are $m$  pair or finite words $W_i=(w_{i,+}, w_{i,-})$ so that $W_i^{\infty}=(w_{i,+}^{\infty}, w_{i,-}^{\infty})$ ($i=1, \cdots, m$)is  periodic prime kneading invariant such that
 $$K=W_1*\cdots*W_m*K^*,$$
 where $K^*$ is the kneading invariant of $R^m K$, which is admissible and prime because $f$ is $m$-renormalizable. By  Hubbard-Sparrow theorem, there is an expansive  and prime Lorenz map $g_*$ such that $K^*$ is the kneading invariant of $g_*$.
 According to Parry's theorem, $g_*$ is conjugated to a prime linear mod one transformation $T_{\beta_*, \alpha_*}$. Hence, the last factor $K$ is associated with a point $(\beta_*, \alpha_*) \in \Delta_{pe}$. For $i=1, \ldots, m$, since $W_i=(w_{i,+}, w_{i,-})$ is periodic prime kneading invariant,  $w_{i,+}^{\infty}$ and $w_{i,-}^{\infty}$ are  periodic points of $R^{i-1}f$. If they belong to the same periodic orbit, then the $i$-th renormalization of $f$ is periodic, the minimal completely invariant closed set of $R^{i-1}f$ is just a periodic orbit. The restriction of  $R^{i-1}f$ on the set is a rational rotation with rational rotation number $\alpha_i$. The factor $W_i$ is associated with $(1, \alpha_i)\in \Delta_r \subset \Delta_{pp}$. If $w_{i,+}^{\infty}$ and $w_{i,-}^{\infty}$ belong to different periodic orbits, then the $i$-th renormalization of $f$ is not periodic. It follows that $W_i^{\infty}=(w_{i,+}^{\infty}, w_{i,-}^{\infty})$ satisfies the Hubbard-Sparrow condition, which implies that there is an expansive Lorenz map $g_i$ whose kneading invariant is $W_i^{\infty}=(w_{i,+}^{\infty}, w_{i,-}^{\infty})$. Remember that $W_i^{\infty}=(w_{i,+}^{\infty}, w_{i,-}^{\infty})$ is prime, $g_i$ is conjugated to a linear mod one transformation $T_{\beta_i, \alpha_i}$.  $w_{i,+}^{\infty}$ and $w_{i,-}^{\infty}$ are  periodic points indicates $T_{\beta_i, \alpha_i}$ is periodic. So the prime and periodic kneading invariant  $W_i^{\infty}=(w_{i,+}^{\infty}, w_{i,-}^{\infty})$  is associated with $(\beta_i, \alpha_i) \in \Delta_{pp}$.

 If $m=\infty$, by the same arguments in the case $m<\infty$, for the factor $W_i$ $(i=1, 2, \ldots)$, there is a point $(\beta_i, \alpha_i) \in \Delta_{pp}$ associated with  $W_i$. $\hfill\square$
\end{proof}

\begin{theorem}
Let $f$ be an $m(0\leq m\leq\infty)$ times  renormalizable expansive Lorenz map, then we have
\begin{enumerate}[(1)]
    \item  If $m<+\infty$, there are a cluster of $m+1$ points $\{A_1, A_2, \cdots, A_m, A_*\} \subset \Delta$  associated with $f$. Moreover, $A_i=(\beta_i, \alpha_i) \in \Delta_{pp} \ (i=0,1,2\ldots m)$, and $A_*=(\beta_*, \alpha_*) \in \Delta_{pe}$.

    \item If $m=\infty$, there are a cluster of infinite points $\{A_1, A_2, A_3, \cdots\} \subset \Delta_{pp}$ associated with $f$.

    \item Let $\{A_i\}_{i=1}^m$ be a cluster of points in $\Delta_{pp}$, and $A_* \in \Delta_{pe}$ when $m<\infty$, then there is an expansive Lorenz map associated

    \item Two expansive Lorenz maps are topologically conjugate if and only if they admit the same cluster of points in parametric space,  and the cluster of points are completed topological invariants for expansive Lorenz maps.
\end{enumerate}
\begin{proof}\rm We know that an expansive Lorenz map can be prime or renormalizable depends on whether the kneading invariants satisfy (2.5) or not. So we prove this Theorem from two cases.
\

\noindent{\bf Case 1:} We first consider $m=0$ which means the expansive Lorenz map $f$ is prime. According to Remark 3, if $f$ is prime, then $f$ is topologically conjugate to a $\beta$-transformation. Hence we have the following:
\begin{center}
$f$ is prime $\Leftrightarrow$ $T_{\beta,\alpha}(x)$ $\Leftrightarrow$ pair $(\beta,\alpha)$
\end{center}
As a result, we can use $(\beta,\alpha)$ to represent an expansive prime Lorenz map, and given two expansive prime Lorenz maps, they are conjugate if and only if they have the same pair $(\beta,\alpha)$.
\

\noindent{\bf Case 2:} Now we will consider the renormalizable Lorenz maps. For any given $f\in L_{r}$ with $m(0<m\leq\infty)$ times renormalization, we will show there exists unique sequence of pairs $(\beta_{i},\alpha_{i})(i=1,2\ldots m+1)$.

\  According to the Remark 3, renormalization is essentially a reduction mechanism. If
$(\Sigma_{k_{+},k_{-}},\sigma)$ is renormalizable with finite
words $w_+$ and $w_-$, then its dynamics is captured by two Lorenz
maps: the renormalization map $(\Sigma_{k_{+}^{1},k_{-}^{1}},
\sigma)$ and the dynamics on the associated set
$(\Sigma_{w_+^{\infty}, w_-^{\infty}},\sigma)$. In other words, after once renormalization, the dynamical behavior of $f$ can be captured by two Lorenz maps, one is restricted in $[a_{1},b_{1}]=[\sigma^{r}(k_{+}),\sigma^{l}(k_{-})]$ which we called $Rf$, and the other is restricted in $D_{1}=\{\delta\in\Sigma_{k_{+},k_{-}},orb(\delta)\cap(\sigma^{r}(k_{+}),\sigma^{l}(k_{-}))=\emptyset\}$ which we called $\phi_{1}$. According to the Proposition 2 and case 1, $\phi_{1}$ is prime and there is a corresponding pair $(\beta_{1},\alpha_{1})$. Moreover, if $Rf$ is prime on $[a_{1},b_{1}]$, then $Rf$ also conjugate to a $\beta$-transformation $(\beta_{2},\alpha_{2})$. In this condition, $m=1$, $f\in L_{r}$ can be only once renormalizable, $f$ corresponds to a unique sequence $(\beta_{1},\alpha_{1})$ and $(\beta_{2},\alpha_{2})$.
 \
\par However, if $Rf$ is renormalizable, we repeat the proceed above. Denote $R^{2}f$ be the minimal renormalization of $Rf$ and $[a_{2},b_{2}]$ be the renormalization interval, which means the dynamical behavior of $Rf$ can be captured by two Lorenz maps: the renormalization map $(\Sigma_{k_{+}^{2},k_{-}^{2}},\sigma)$ and and the dynamics on the associated set
$(\Sigma_{w_{+,1}^{\infty}, w_{-,1}^{\infty}},\sigma)$. So $Rf$ restricted on $D_{2}=\Sigma_{w_{+,1}^{\infty}, w_{-,1}^{\infty}}$ is prime and corresponds to a pair $(\beta_{2},\alpha_{2})$. In this way, if $f\in L_{r}$ is $m(0<m\leq\infty)$ times renormalizable, we can obtain a unique sequence of pairs $(\beta_{i},\alpha_{i})$, $i=1,2\ldots m+1$. $\hfill\square$
\end{proof}

\end{theorem}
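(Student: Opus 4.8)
The plan is to reduce the statement to kneading invariants and then feed it into the Factorization Theorem. By Lemma 3 an expansive Lorenz map is determined up to topological conjugacy by its kneading invariant $K(f)=(k_+,k_-)$, and every pair obeying the Hubbard--Sparrow condition (2.2) is realized, so it suffices to build a bijection between such kneading invariants and the clusters in $\mathcal{A}_m$ that respects the number of renormalizations. I would organize the argument in three stages: \emph{assignment} (from $f\in\mathcal{L}_m$ produce a cluster), \emph{reconstruction} (from a cluster produce an $m$-renormalizable map), and \emph{rigidity} (these two operations are mutually inverse and conjugacy is equivalent to equality of clusters).

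For the assignment stage, which gives parts (1) and (2), I apply the Factorization Theorem to write $K(f)=W_1*\cdots*W_m*K^*$ when $m<\infty$ and $K(f)=W_1*W_2*\cdots$ when $m=\infty$, where each $W_i^{\infty}=(w_{i,+}^{\infty},w_{i,-}^{\infty})$ is periodic prime and $K^*$ is prime and admissible. I then attach a point of $\Delta$ to each factor. To $K^*$: being prime and expansive, by Parry's theorem the Lorenz map realizing it is conjugate to a unique linear mod one transformation $T_{\beta_*,\alpha_*}$, itself prime and expansive, so $A_*=(\beta_*,\alpha_*)\in\Delta_{pe}$, with $\beta_*=\exp(h(R^m f))$ and $\alpha_*$ recovered from $\beta_*$ and $k(0)$ by Theorem 2. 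To $W_i^{\infty}$: its endpoints $w_{i,\pm}^{\infty}$ are periodic points of $R^{i-1}f$, and there are two cases. If they lie on distinct periodic orbits, the $i$-th renormalization is non-periodic, $W_i^{\infty}$ satisfies (2.2), and Parry's theorem yields a periodic, prime, \emph{expansive} $T_{\beta_i,\alpha_i}$ with $\beta_i>1$, so $A_i\in\Delta_{pp}$. If they lie on the same periodic orbit, the $i$-th renormalization is periodic, the restriction of $R^{i-1}f$ to the corresponding well-ordered periodic orbit is a rational rotation with rotation number $p_i/q_i$ (by the results of Alsed$\grave{a}$--Falc$\grave{o}$ and Malkin), and $A_i=(1,p_i/q_i)\in\Delta_r\subset\Delta_{pp}$. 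Collecting these points produces the cluster.

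For the reconstruction stage, part (3), I run this backwards: each $A_i\in\Delta_{pp}$ determines a periodic prime $T_{\beta_i,\alpha_i}$ --- a rational rotation when $\beta_i=1$ --- hence a periodic prime kneading invariant $W_i^{\infty}$, and $A_*\in\Delta_{pe}$ determines a prime admissible $K^*$; then I set $K:=W_1*\cdots*W_m*K^*$, or the infinite $*$-product when $m=\infty$. The point to verify is that $K$ is again an admissible expansive kneading invariant: this I would obtain by induction on the number of factors from the lemma that $W*K^*$ is expansive iff $K^*$ is, together with associativity of the $*$-product, passing to the limit when $m=\infty$ (each finite truncation is admissible, the sequences stabilize on every coordinate, and the Hubbard--Sparrow inequalities survive in the limit). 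Since each $W_i$ and $K^*$ is prime, the factorization exhibits exactly $m$ renormalizations, so the map $g$ with $K(g)=K$ furnished by Lemma 3 lies in $\mathcal{L}_m$ and carries the prescribed cluster. I expect this to be the main obstacle: one has to check that splicing in a rational-rotation factor --- the periodic case, in which $W_i^{\infty}$ by itself violates (2.2) --- still yields a bona fide expansive kneading invariant once the surviving expansive factor is attached, and that the periodic-orbit bookkeeping creates no spurious extra renormalization level.

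For the rigidity stage, part (4): two expansive Lorenz maps are topologically conjugate iff they share a kneading invariant (Lemma 3); by the uniqueness clause of the Factorization Theorem the factors $(W_1,\dots,W_m,K^*)$ are determined by $K$; and the assignments $W_i^{\infty}\mapsto A_i$ and $K^*\mapsto A_*$ are bijections, since $\beta=\exp(h)$ is a conjugacy invariant while, once $\beta$ is fixed, $\alpha$ is forced by the formula of Theorem 2, and conversely $(\beta,\alpha)$ pins down $T_{\beta,\alpha}$ and hence its kneading invariant. Therefore $f\mapsto\{A_i\}$ is well defined, injective, surjective by the reconstruction stage, and constant on conjugacy classes, which is exactly the complete-invariant assertion; the one remaining point of care is to make the phrase ``cluster of points'' precise when infinitely many $A_i$ coincide, i.e. to read the invariant as the ordered sequence together with its set of accumulation points.
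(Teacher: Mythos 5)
Your proposal is correct in substance, but it is not the argument the paper gives for this particular theorem: you route everything through the Factorization Theorem and the $*$-product calculus (which is how the paper proves its Main Theorem, stated just before this result), organizing the proof into assignment, reconstruction and rigidity, with Parry's theorem and Theorem 2 converting each prime factor $W_i^{\infty}$ or $K^*$ into a point of $\Delta_{pp}$ or $\Delta_{pe}$, and Lemma 3 plus uniqueness of the factorization giving completeness. The paper's own proof of this theorem is instead a direct dynamical induction: Case 1 handles prime maps via the Parry correspondence $f\leftrightarrow(\beta,\alpha)$, and Case 2 iterates the minimal renormalization, at each level splitting the dynamics of $R^{i-1}f$ into the renormalized map and the restriction to the minimal completely invariant closed set $D_i$, invoking Proposition 2 to see that this restriction is prime and hence carries a pair $(\beta_i,\alpha_i)$; it never mentions the $*$-product and it does not really argue parts (3) and (4), which are left to the Main Theorem. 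What your route buys is exactly those missing pieces: the realization of an arbitrary cluster (part (3)) via iterated $*$-products and the lemma that $W*K^*$ is expansive iff $K^*$ is, and the complete-invariant claim (part (4)) via the uniqueness clause of the factorization; what the paper's route buys is a shorter existence/uniqueness argument for the sequence of pairs that stays at the level of the maps themselves. Note that your two flagged delicate points --- admissibility when a periodic (rational-rotation) factor $W_i^{\infty}$, which violates (2.2) on its own, is spliced in, and the passage to the limit of Hubbard--Sparrow inequalities in the $m=\infty$ reconstruction --- are genuine, but the paper does not resolve them either, so your sketch is at least as complete as the published argument.
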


\begin{definition}\rm
Let $d(f,g)$ be the distance of two expansive Lorenz maps $f$ and $g$, denote $p$ be the maximal integer such that the first $p$ pairs of $f$ and $g$ are identical, then we can obtain:
$$
d(f,g)=\frac{1}{2^{p}}(1+\frac{1}{2^{s_{+}}}+\frac{1}{2^{s_{-}}})
$$
where $s_{+}$ means the first $s_{+}$ words of $k_{f+}^{p+1}$ and $k_{g+}^{p+1 }$ are identical, $s_{-}$ means the first $s_{-}$ words of $k_{f-}^{p+1}$ and $k_{g-}^{p+1 }$ are the same.
\end{definition}
\begin{lemma}
The distance $d(f,g)$ is a new metric and it admits triangle inequality.
\end{lemma}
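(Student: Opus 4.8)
The plan is to verify the three metric axioms for the function $d$ defined above, where the key structural input is the Factorization Theorem together with the Main Theorem: each expansive Lorenz map $f$ is uniquely coded by a sequence of pairs $(\beta_i,\alpha_i)$ arising from its successive renormalizations, and, at the level of kneading invariants, by the $*$-product decomposition $K(f)=W_1*W_2*\cdots$. First I would dispatch positivity and symmetry. Symmetry is immediate since the definition of $p$, $s_+$, $s_-$ is symmetric in $f$ and $g$. For positivity, note $d(f,g)\ge 0$ always, and $d(f,g)=0$ would force $p=\infty$, i.e. all pairs $(\beta_i,\alpha_i)$ of $f$ and $g$ coincide; by the one-to-one correspondence in the Main Theorem this means $f$ and $g$ have the same kneading invariant, hence are topologically conjugate, so they represent the same point in the space of expansive Lorenz maps. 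Conversely if $f=g$ (up to conjugacy) then $p=\infty$ and the correction term $\tfrac{1}{2^{s_+}}+\tfrac{1}{2^{s_-}}$ is vacuous, so we set $d(f,g)=0$. (Here one should state the convention that when all pairs agree the prefactor $2^{-p}$ is interpreted as $0$.)

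The substantive part is the triangle inequality $d(f,h)\le d(f,g)+d(g,h)$. I would argue by cases on the relative sizes of the agreement indices. Let $p_{fg},p_{gh},p_{fh}$ denote the number of initial identical pairs for the three relevant couples. The crucial combinatorial observation is an ultrametric-type inequality at the level of the pair-sequences: $p_{fh}\ge\min(p_{fg},p_{gh})$, since if the first $k$ pairs of $f,g$ agree and the first $k$ pairs of $g,h$ agree, then the first $k$ pairs of $f,h$ agree. The case analysis then runs: if $p_{fg}\ne p_{gh}$, then $p_{fh}=\min(p_{fg},p_{gh})$ and the dominant term $2^{-p_{fh}}(1+\cdots)$ on the left is bounded by the corresponding dominant term on the right (whichever of the two summands realizes the minimum), because each summand contributes at least $2^{-p}\le 2^{-p}(1+2^{-s_+}+2^{-s_-})\le 2^{-p}\cdot 3$ and in fact the single larger term already dominates by a factor comparison. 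The genuinely delicate subcase is $p_{fg}=p_{gh}=:p$, where possibly $p_{fh}>p$ (the disagreeing pair of $f$ vs $g$ and of $g$ vs $h$ might cancel when comparing $f$ vs $h$) or $p_{fh}=p$. If $p_{fh}>p$, the left side is at most $2^{-(p+1)}\cdot 3 \le 2^{-p}$, which is at most either summand on the right, done. If $p_{fh}=p$, one must compare the $(p+1)$-th factors: here I would use that the $(p+1)$-th factor kneading words $k^{p+1}_{f\pm},k^{p+1}_{g\pm},k^{p+1}_{h\pm}$ themselves satisfy the standard ultrametric inequality on sequences (agreement length of $f,h$ is at least the min of agreement lengths of $f,g$ and $g,h$), so $s^{fh}_{\pm}\ge\min(s^{fg}_\pm,s^{gh}_\pm)$, and then $2^{-s^{fh}_+}+2^{-s^{fh}_-}\le \big(2^{-s^{fg}_+}+2^{-s^{fg}_-}\big)+\big(2^{-s^{gh}_+}+2^{-s^{gh}_-}\big)$ after multiplying through by $2^{-p}$ and absorbing the leading $1$'s.

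The step I expect to be the main obstacle is precisely this last subcase $p_{fg}=p_{gh}=p_{fh}=p$: one has to be careful that the ``$+$'' component agreement length and the ``$-$'' component agreement length behave independently and that the bookkeeping of the leading constant $1$ does not break the inequality when both summands $2^{-s^{fg}_\pm}$ and $2^{-s^{gh}_\pm}$ are small. I would handle it by the clean estimate
\[
d(f,h)=2^{-p}\big(1+2^{-s^{fh}_+}+2^{-s^{fh}_-}\big)\le 2^{-p}\big(1+2^{-s^{fg}_+}+2^{-s^{fg}_-}\big)+2^{-p}\big(2^{-s^{gh}_+}+2^{-s^{gh}_-}\big),
\]
which is $\le d(f,g)+d(g,h)$ since $d(g,h)=2^{-p}(1+2^{-s^{gh}_+}+2^{-s^{gh}_-})\ge 2^{-p}(2^{-s^{gh}_+}+2^{-s^{gh}_-})$; the middle inequality is exactly $s^{fh}_\pm\ge\min(s^{fg}_\pm,s^{gh}_\pm)$ applied coordinatewise, which in turn follows from the fact that the kneading sequences of the $(p+1)$-th renormalization factors are ordinary $0$--$1$ sequences and ``common prefix length'' is an ultrametric-type quantity. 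Finally I would remark that $d$ is well defined on conjugacy classes because, by the Main Theorem, both the pair-sequence and the factor kneading invariants depend only on the conjugacy class of $f$, completing the verification that $d$ is a metric.
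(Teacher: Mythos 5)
Your treatment of the subcase $p_{fg}=p_{gh}=p_{fh}$ is correct, and indeed cleaner than the paper's: the coordinatewise bound $2^{-s^{fh}_\pm}\le 2^{-s^{fg}_\pm}+2^{-s^{gh}_\pm}$ from the common-prefix ultrametric property settles it. The genuine gap is the case $p_{fg}\neq p_{gh}$, which you dismiss with ``the single larger term already dominates by a factor comparison.'' That claim is false, and this case is exactly where the work lies (it is the case the paper's own proof is devoted to). Say $p:=p_{fg}<p_{gh}$, so $p_{fh}=p$. Then $d(f,g)$ and $d(f,h)$ carry the same prefactor $2^{-p}$, but the correction terms need not compare: if $f$ and $g$ agree on a very long prefix at level $p+1$ (so $s^{fg}_\pm$ huge and $d(f,g)\approx 2^{-p}$) while the level-$(p+1)$ sequences of $g$ and $h$ diverge after a prefix of length $L$, then $s^{fh}_\pm$ can equal $L$ and $d(f,h)\approx 2^{-p}(1+2^{1-L})>d(f,g)$, so the summand realizing the minimum does not dominate. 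One must charge the excess $2^{-p}\bigl(2^{-s^{fh}_+}+2^{-s^{fh}_-}-2^{-s^{fg}_+}-2^{-s^{fg}_-}\bigr)\le 2^{-p}\cdot 2\cdot 2^{-L}$ to the other summand $d(g,h)\ge 2^{-p_{gh}}$, and this comparison is not free: the quantities $s^{gh}_\pm$ in $d(g,h)$ are measured at level $p_{gh}+1$, not at level $p+1$, so your ultrametric inequality cannot be applied across the three distances directly. What rescues the inequality is the quantitative fact that is the heart of the paper's argument (its ``at least $2^{p+1}$ common words'' estimate): since $g$ and $h$ share the factors $W_{p+1},\dots,W_{p_{gh}}$ and every renormalization word has length at least $2$, their level-$(p+1)$ kneading sequences share a prefix of length $L\ge 2^{\,p_{gh}-p}$; together with $2^{q}\ge q+1$ this gives $2^{-p}\cdot 2\cdot 2^{-L}\le 2^{-p_{gh}}\le d(g,h)$. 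Without this cross-level length estimate the unequal-$p$ case of the triangle inequality is unproved.

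Two smaller points. In the subcase $p_{fg}=p_{gh}=p$ with $p_{fh}>p$ you bound the left side by $3\cdot 2^{-(p+1)}$ and assert it is $\le 2^{-p}$; in fact $3\cdot 2^{-(p+1)}=\frac{3}{2}\cdot 2^{-p}$, which can exceed either single summand, though the conclusion survives because the right-hand side is at least $2\cdot 2^{-p}$. And for the identity axiom you do need to state the conventions explicitly (interpret $p=\infty$, i.e.\ $2^{-p}=0$, when all pairs coincide, and regard $d$ as defined on conjugacy classes via the Main Theorem), as you yourself note; the paper leaves this implicit as well.
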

\begin{proof} \rm For any given expansive Lorenz maps $f,g,h$, we have $d(f,g)=\frac{1}{2^{p_{1}}}(1+\frac{1}{2^{s_{1}}}+\frac{1}{2^{t_{1}}})$,  $d(g,h)=\frac{1}{2^{p_{2}}}(1+\frac{1}{2^{s_{2}}}+\frac{1}{2^{t_{2}}})$. If $p_{1}= p_{2}$, it is obvious that triangle inequality admits. Now we suppose $p_{1}< p_{2}$, then $d(f,h)=\frac{1}{2^{p_{1}}}(1+\frac{1}{2^{s_{3}}}+\frac{1}{2^{t_{3}}})$ and we can have the following:
\begin{equation}
\left \{
\begin{array}{ll}
d(f,g)=\frac{1}{2^{p_{1}}}(1+\frac{1}{2^{s_{1}}}+\frac{1}{2^{t_{1}}}) \\
d(f,h)=\frac{1}{2^{p_{1}}}(1+\frac{1}{2^{s_{3}}}+\frac{1}{2^{t_{3}}}) \\
d(g,h)=\frac{1}{2^{p_{2}}}(1+\frac{1}{2^{s_{2}}}+\frac{1}{2^{t_{2}}})
\end{array}
\right.
\end{equation}
Since $p_{1}< p_{2}$, we can get:
\begin{equation*}
\left \{
\begin{array}{ll}
2^{p_{1}}d(f,g)= (1+\frac{1}{2^{s_{1}}}+\frac{1}{2^{t_{1}}}) \\
2^{p_{1}}d(f,h)= (1+\frac{1}{2^{s_{3}}}+\frac{1}{2^{t_{3}}}) \\
2^{p_{1}}d(g,h)=\frac{1}{2^{p_{2}-p_{1}}}(1+\frac{1}{2^{s_{2}}}+\frac{1}{2^{t_{2}}})
\end{array}
\right.
\end{equation*}
Let $p=p_{2}-p_{1}>0$, then
\begin{equation}
\left \{
\begin{array}{ll}
2^{p_{1}}d(f,g)= (1+\frac{1}{2^{s_{1}}}+\frac{1}{2^{t_{1}}})=M \\
2^{p_{1}}d(f,h)= (1+\frac{1}{2^{s_{3}}}+\frac{1}{2^{t_{3}}})=N \\
2^{p_{1}}d(g,h)=\frac{1}{2^{p}}(1+\frac{1}{2^{s_{2}}}+\frac{1}{2^{t_{2}}})=K
\end{array}
\right.
\end{equation}
It is clear that $M+N>K$, so we will verify $M+K>N$ and $N+K>M$. To the straight sense, we may assume that triangle inequality does not exist because when $p$ is big enough, $1/2^{p}$ may be even smaller than $1/2^{q}$ where $q=\min\{s_{1},s_{3},t_{1},t_{3}\}$. In fact, with the increase of $p$, $q$ is also increasing and $p<q$ always exists. According to the equation (4.9) and (4.10), we can see that for any given three expansive Lorenz maps, if $p_{1},p_{2}>0$, we can transform them into one prime map and two renormalizable maps. And (4.10) indicates $f'$ is prime, $g',h'$ are renormalizable with first $p$ pairs are identical. Suppose $(k_{g'+}^{p+1},k_{g'-}^{p+1}=10\cdots,01\cdots)$ and $(k_{h'+}^{p+1},k_{h'-}^{p+1}=10\cdots,01\cdots)$, we have known that $w_{+}=10,w_{-}=01$ is the shortest renormalization words. So before $p$ times renormalization, $k_{g'+}$ and $k_{h'+}$ at least have $m\geq2^{p+1}$ same words, $k_{g'-}$ and $k_{h'-}$ at least have $n\geq2^{p+1}$ same words. It is clear that for any $p>0$, $p<2^{p+1}$ always exsits. So we can obtain $1/2^{p}>1/2^{2^{p+1}}$. Now we consider case 1: $s_{1}<m$, $t_{1}<n$, in this case, $s_{1}=s_{3}$, $t_{1}=t_{3}$, and $M=N$, the triangle inequality admits. Case 2: $s_{1}>m$, $t_{1}>n$, in this case, $s_{3}\geq m$ and $t_{3}\geq n$, $1/2^{p}>\max\{1/2^{t_{1}},1/2^{t_{3}},2^{s_{1}},1/2^{s_{3}}\}$, this means $M+K>N$ and $N+K>M$. It is similar with  $s_{1}<m$, $t_{1}>n$ or $s_{1}>m$, $t_{1}<n$. $\hfill\square$
\end{proof}
\begin{example}\rm Let
$$\left \{
\begin{array}{ll}
k_{g+}= (100110110111001110100110110111010011011)^{\infty},\\
k_{g-}= (01110100110111001101101110)^{\infty}.
\end{array}
\right.
$$
$$\left \{
\begin{array}{ll}
k_{h+}= (10011011011100111010011011)^{\infty} ,\\
k_{h-}= (011101001101110011011011101001101101110)^{\infty}.
\end{array}
\right.
$$
We can see that $g$ and $h$ have three identical pairs of $(\alpha,\beta)$ because they have the same renormalizable words : $(w_{1+},w_{1-})=(10,011)$, $(w_{2+},w_{2-})=(100,01)$, $(w_{3+},w_{3-})=(10 ,01)$. So $p=3$ and $k_{g+},k_{h+}$ have 26$>2^{3+1}$ same words, $k_{g-},k_{h-}$ have 26 $>2^{3+1}$ same words. This indicates for any given prime kneading invariants $(k_{f+},k_{f-})$, there are only two cases: either $M=N$ or $1/2^{3}>\max\{1/2^{t_{1}},1/2^{t_{3}},2^{s_{1}},1/2^{s_{3}}\}$. Both two cases can lead to the triangle inequality.
\end{example}
\begin{remark}\rm Let $\mu,\nu$ be two sequences in $\{0,1\}^{N}$, the classical metric is defined as $d(\mu,\nu)=2^{-n}$ where $n=min\{k\geq0; \mu_{k}\neq \nu_{k}\}$. It is obvious that this classical metric has no equilateral triangles which means there exists no sequence $\eta(\eta\neq\mu\neq\nu)$ satisfying $d(\mu,\nu)=d(\mu,\eta)=d(\nu,\eta)$. Similarly, the new metric we introduced exists isosceles triangles. It indicates that there exist three expansive Lorenz maps $f,g,h$ admit $d(f,g)=d(f,h)\neq d(g,h)$.
\end{remark}
Next we will give an example to show the advantage of new metric over the classical metric.
\begin{example}\rm Suppose $f=(k_{+},k_{-})=((10001)^{\infty},(01100)^{\infty})$,
$g=(g_{+},g_{-})=(1000110001(110)^{\infty}$,$0110001100(01)^{\infty})$, $h=(h_{+},h_{-}) =(10001(100)^{\infty},01100(01)^{\infty})$. We can see that all  of the three kneading sequences are admissible, both $f$ and $h$ are renormalizable with $w_{+}=(100)$, $w_{-}=(01)$, only $g$ is prime. If endowed with the classical metric, we can obtain $d(f,g)=2^{-11}$ and $d(f,h)=2^{-7}$ which means $g$ is closer to $f$ than $h$ and this is in conflict with the fact. However, with respect to the new metric, $g$ is prime indicates $p=0$, $d(f,g)=\frac{1}{2^{0}}(1+\frac{1}{2^{11}}+\frac{1}{2^{12}})$. $f$ and $h$ have the same renormalization words means $p=1$ and $d(f,h)=\frac{1}{2^{1}}(1+\frac{1}{2^{3}}+\frac{1}{2^{3}})$. In this way, we can obtain $d(f,g)>d(f,h)$, $h$ is closer to $f$ than $g$, which satisfies the fact. Hence the new metric is more accurate.

\end{example}
\begin{remark}\rm Given $f,g,h\in L_{r}$, $f$ topologically conjugate to $g$ if and only if they admit the same sequences of pairs $(\beta_{i},\alpha_{i})$. Moreover, If $d(f,g)<d(f,h)$, then $g$ is more close to $f$ than $h$.
\end{remark}

\section{Uniform linearization}
\begin{definition}\rm
Suppose the kneading invariants of expansive Lorenz maps $K_{f}=({k_ + },{k_ - })$ can be renormalized via $w_{+}$ and $w_{-}$, and denote $RK_{f}$ be the new kneading invariants after renormalization. Set the polynomial $K(z,z) = {K_ + }(z,z) - {K_ - }(z,z)$ , where
$${K_ + }(z,z) = \sum\limits_{i = 0}^\infty  {{k_ + }(i){z^i}},\ \ {K_ - }(z,z) = \sum\limits_{i = 0}^\infty  {{k_ - }(i){z^i}}.$$
${k_ + }(i)$ means the $i$th symbol of sequence $k_{+}$. Set:
$${w_ + }(z,z) = \sum\limits_{i = 0}^{{n_ + }} {{w_ + }(i){z^i}}, \ \ \ {w_ - }(z,z) = \sum\limits_{i = 0}^{{n_ - }} {{w_ - }(i){z^i}}.$$
Where $|w_{+}|=n_{+}$ and $|w_{-}|=n_{-}$. Let $K({z^a},{z^b}) = {K_ + }({z^a},{z^b}) - {K_ - }({z^a},{z^b})$, where
$$k({z^a},{z^b}) = \sum\limits_{i = 0}^\infty  {{k_i}{z^{a(i - {l_i}) + b{l_i}}}}, \ \ \ {l_i} = \# \{ j|0 \le j < i,{k_j} = 1\}.$$

\end{definition}

\begin{proposition}
Let $f$ be an $m$-renormalizable expansive Lorenz map $ (0\leq m\leq\infty)$.
\item 1. $f$ is uniformly linearizable if and only if $m$ is finite and $\beta_{i}=1$ for $i=1,2\ldots m$. Moreover, $\beta^{\ast}=\beta_{m+1}^{1/(l_{1}l_{2}\ldots l_{m})}$, where $\beta^{\ast}$ corresponds to $(k_{+},k_{-})$, $l_{i}$ means the length of  $i$th periodic renormalization words.
\item 2. $h_{top}f=0$ if and only if $\beta_{i}=1$ for all $i$.
\end{proposition}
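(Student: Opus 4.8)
The plan is to read both statements off the $*$-product factorization $K(f)=W_{1}*\cdots *W_{m}*K^{*}$ supplied by the Factorization Theorem, using two inputs: Glendinning's theorem ([9, Theorem 1], stated above as a Lemma), that an expansive Lorenz map is conjugate to a $\beta$-transformation if and only if it is finitely renormalizable with every renormalization periodic; and the fact recalled in Section 2 that a linear mod one transformation is renormalizable only periodically. The computation that drives everything is a remark on periodic renormalizations: if $g=T_{\beta,\alpha}$ and $Rg$ is a periodic renormalization of $g$ of period $q$, then by [4, Theorem A] the two return times both equal $q$ (the period of the minimal periodic orbit), so $Rg$ is $g^{q}$ restricted to the renormalization interval and then rescaled by an affine map; since $g^{q}$ has constant slope $\beta^{q}$ on each branch and affine rescalings preserve slopes, $Rg=T_{\beta^{q},\alpha'}$ for a suitable $\alpha'$. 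For a general expansive Lorenz map $f$ with a period-$l$ periodic renormalization, the same description gives $Rf=f^{l}$ on the renormalization interval, while the part not seen by the renormalization is exactly the minimal periodic orbit $O$ (by [4, Theorem B]); since $O$ carries no entropy and $h_{top}(f^{l})=l\,h_{top}(f)$, this yields $h_{top}(Rf)=l\,h_{top}(f)$, and hence $h_{top}(f)=\frac{1}{l_{1}\cdots l_{m}}h_{top}(R^{m}f)$ whenever the first $m$ renormalizations of $f$ are periodic, of periods $l_{1},\dots ,l_{m}$.

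For part 1, suppose $f$ is uniformly linearizable, say $f\cong T_{\beta^{*},\alpha^{*}}$. Renormalizability and the type of each renormalization are conjugacy invariants, so by Glendinning's theorem $m<\infty$ and every renormalization of $f$ is periodic; and, as worked out in the proof of the Main Theorem, a periodic renormalization is the one whose factor $W_{i}$ corresponds to a pair in $\Delta_{r}$, i.e. to $(1,\alpha_{i})$, so $\beta_{i}=1$ for $i=1,\dots ,m$. Conversely, if $m<\infty$ and $\beta_{1}=\dots =\beta_{m}=1$, then each $W_{i}$ corresponds to $(1,\alpha_{i})\in\Delta_{r}$, so each of the $m$ renormalizations is periodic, and Glendinning's theorem gives that $f$ is conjugate to a $\beta$-transformation. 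For the ``moreover'' clause, iterate the remark above along the $m$ periodic renormalizations of $f\cong T_{\beta^{*},\alpha^{*}}$, which have periods $l_{1},\dots ,l_{m}$: this gives $R^{m}f\cong T_{(\beta^{*})^{l_{1}\cdots l_{m}},\alpha_{*}}$, and since $R^{m}f$ is prime and corresponds to $(\beta_{m+1},\alpha_{m+1})=(\beta_{*},\alpha_{*})$ we get $\beta_{m+1}=(\beta^{*})^{l_{1}\cdots l_{m}}$, that is $\beta^{*}=\beta_{m+1}^{1/(l_{1}\cdots l_{m})}$.

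For part 2, first assume $\beta_{i}=1$ for all $i$. If $m<\infty$ then part 1 makes $f$ uniformly linearizable and the ``moreover'' clause gives $\beta^{*}=\beta_{m+1}^{1/(l_{1}\cdots l_{m})}=1$, hence $h_{top}(f)=\log\beta^{*}=0$. If $m=\infty$, then for every finite $m'$ the first $m'$ renormalizations are periodic, so $h_{top}(f)=(l_{1}\cdots l_{m'})^{-1}h_{top}(R^{m'}f)\le(l_{1}\cdots l_{m'})^{-1}\log 2$; since each $l_{i}\ge 2$ this forces $h_{top}(f)=0$. Conversely, assume $h_{top}(f)=0$ and, towards a contradiction, let $j$ be the least index with $\beta_{j}>1$. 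The first $j-1$ renormalizations are periodic, so $h_{top}(f)=(l_{1}\cdots l_{j-1})^{-1}h_{top}(R^{j-1}f)$. But $h_{top}(R^{j-1}f)>0$: if $j\le m$, then by Proposition \ref{prime} the minimal CICS $D_{j}$ of $R^{j-1}f$ is a Cantor set on which $R^{j-1}f$ is conjugate to $T_{\beta_{j},\alpha_{j}}$, so $\Omega_{D_{j}}$ is a subshift of $\Omega_{R^{j-1}f}$ of entropy $\log\beta_{j}>0$; if $j=m+1$, then $R^{m}f$ is prime and conjugate to $T_{\beta_{m+1},\alpha_{m+1}}$ with $\beta_{m+1}>1$. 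Either way $h_{top}(f)>0$, a contradiction; hence $\beta_{i}=1$ for all $i$.

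The step requiring the most care is the entropy identity $h_{top}(Rf)=l\,h_{top}(f)$ for a period-$l$ periodic renormalization, and the underlying assertion that such an $Rf$ is literally $f^{l}$ restricted to the renormalization interval. This rests on the precise form of [4, Theorem A]: the renormalization interval is $[f^{r}(c_{+}),f^{\ell}(c_{-})]$ with $f^{\ell}(e_{-})=e_{-}$, $f^{r}(e_{+})=e_{+}$, and in the periodic case $e_{-},e_{+}$ both lie on the minimal periodic orbit $O$, forcing $\ell=r=\#O=l$; by [4, Theorem B] the complement of $\bigcup_{n}\sigma^{-n}$ of the renormalization region is precisely $O$, which has zero entropy, so passing from $f^{l}$ to its restriction to the renormalization interval does not lower the entropy and $h_{top}(f^{l})=l\,h_{top}(f)$ gives the identity. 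The rest is bookkeeping with associativity of the $*$-product, conjugacy-invariance of the renormalization structure, and monotonicity of topological entropy under subsystems.
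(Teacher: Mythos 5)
Your proposal is correct and follows essentially the same route as the paper: both rest on the fact that a periodic renormalization of period $l$ acts as $f^{l}$ on the renormalization interval (so $h_{top}(Rf)=l\,h_{top}(f)$ and $\beta^{*}=\beta_{m+1}^{1/(l_{1}\cdots l_{m})}$ by induction), combined with Glendinning's theorem characterizing conjugacy to a linear mod one transformation. If anything, your write-up is more complete than the paper's, which only argues the ``if'' directions of both equivalences and applies the finite-$m$ formula somewhat loosely when $m=\infty$, whereas you supply the converse directions (via conjugacy-invariance of the renormalization type and positivity of entropy on the Cantor set $D_{j}$ when some $\beta_{j}>1$) and handle the infinitely renormalizable case with the explicit bound $h_{top}(f)\le (l_{1}\cdots l_{m'})^{-1}\log 2$.
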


\begin{proof}\rm (1) A possible way to study the uniform piecewise linearizing questino is to the find suitable condition to exclude the existence of completely invariant Cantor set. For expanding Lorenz maps, a completely invariant Cantor set corresponds to a non-periodic renormalization. According to Proposition 1, if $\beta_{i}=1$ for $i=1,2\ldots m$, then all the $m$ times renormalization are perodic. In this way, $f$ admits no completely invariant Cantor set, so $f$ is conjugate to a linear mod one transformation. Now we will prove the expression of $\beta^{\ast}$ through inductive method.
\par When $m=1$, which means $f$ can be only periodic renormalized once. According to the definition of periodic renormalization$$ Rf=\left \{
\begin{array}{ll}
f^{l_{1}}(x) & x \in [a,\  c), \\
f^{l_{1}}(x) & x \in (c,\  b],
\end{array}
\right.
$$
Hence the entropy of $Rf$ satisfies $h(Rf)=h(f^{l_{1}})=l_{1}h(f)$, this indicates $\ln\beta_{1}=l_{1}\ln\beta^{*}$, then we can obtain $\beta^{*}=\beta_{1}^{1/l_{1}}$.
\par Now we suppose when $m=k$, the expression of $\beta^{*}=\beta_{k}^{1/(l_{1}l_{2}\ldots l_{k})}$ is accessible, then we will show $m=k+1$ also admits the expression. If $f$ can be $k+1$ times periodic renormalizable, then $h(R^{k+1}f)=h((R^{k}f)^{l_{k+1}})=l_{k+1}h(R^{k}f)$, which indicates $\ln\beta_{k+1}=l_{k+1}\ln\beta_{k}$, so we can obtain $\beta^{*}=\beta_{k+1}^{1/(l_{1}l_{2}\ldots l_{k+1})}$. The expression of $\beta^{*}$ is proved.

\noindent(2) If $\beta_{i}=1$ for all $i$, according to the expression in (1), we can obtain that $\beta^{*}=1^{1/(l_{1}l_{2}\ldots l_{k+1})}=1$, so $h_{top}f=\ln\beta^{*}=\ln1=0$. $\hfill\square$

\end{proof}

\begin{lemma}\rm
Suppose $(w_{+},w_{-})$ be the renormalization words of periodic renormalization, then only first two symbols of $w_{+}$ and $w_{-}$ are different, which means $w_{+}(z,z)-w_{-}(z,z)=1-z$.
\begin{proof}\rm We have known that each rational number corresponds a periodic renormalization $(w_{+},w_{-})$, $w_{+}$ and $w_{-}$ are on the same periodic orbit. According to Proposition 1, given a rational number $p/q$ ($p<q$, $p$ and $q$ are prime), there exist $(w_{+},w_{-})$ such that $p/q$ equals to the probability of 1 in the sequence. In addition, any periodic renormalization word can be obtained via trivial renormalization. For example, $(10,01)$ corresponds 1/2, after trivial renormalization $(10,0)$ or $(1,01)$ we can obtain 1/3 and 2/3. We call 1/3 and 2/3 be the preimages of 1/2, and each rational number have two preimages. Repeat the process, we can get a binary tree with initial number 1/2 and all the rational numbers between $(0,1)$ are in the binary tree. We first show any number in the binary tree, its two preimages' renormalization words has only two different symbols. Firstly, the renormalization words of preimages of 1/2 are $(100,010)$ and $(101,011)$, it is clear only first two symbols are not identical. Now given a rational number $p/q$, suppose its renormalization words $(10{a_1}{a_2}...{a_m},01{b_1}{b_2}...{b_m})$ has only two different symbols, we prove its preimages also satisfy the condition. If with trivial renormalization $(10,0)$, then we have $(100{({a_1}{a_2}...{a_m})^ * },010{({b_1}{b_2}...{b_m})^ * })$, satisfy the condition; if with trivial renormalization $(1,01)$, then we have $(101{({a_1}{a_2}...{a_m})^ * },011{({b_1}{b_2}...{b_m})^ * })$, also satisfy the condition. With the expression of $w(z,z)$, we obtain ${w_ + }(z,z) - {w_ - }(z,z) = 1 - t$ for each periodic renormalization words $(w_{+},w_{-})$. $\hfill\square$
\end{proof}
\end{lemma}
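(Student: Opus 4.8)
The plan is to reduce the statement to a purely combinatorial fact about the periodic words attached to rational rotation numbers, and then prove that fact by induction along the Farey (Stern--Brocot) tree of rationals, using the two ``trivial renormalizations'' as the generating operations. First I would recall, from the proof of the Factorization Theorem together with the cited results of Alsed\`a--Falc\`o [1], Malkin [11] and Hubbard--Sparrow [6], that a periodic renormalization is exactly the one associated with a well-ordered periodic orbit $O_{p/q}$ of a rotation number $p/q$ with $\gcd(p,q)=1$, and that its word pair $(w_+,w_-)$ is the kneading pair $(k(c^+),k(c^-))$ of that orbit; in particular $w_+=1\cdots$, $w_-=0\cdots$, $|w_+|=|w_-|=q$, and the frequency of the symbol $1$ in $w_+$ equals $p/q$. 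Hence it suffices to prove: \emph{for every $p/q$ the associated pair satisfies $w_+=10u$ and $w_-=01u$ for one common word $u$}; this is exactly the assertion $w_+(z,z)-w_-(z,z)=1-z$, because then $w_+(z,z)=1+\sum_{i\ge 2}u_{i-2}z^{i}$ and $w_-(z,z)=z+\sum_{i\ge 2}u_{i-2}z^{i}$. I would then introduce the two operations $\Theta_0:=(10,0)\ast(\cdot)$ and $\Theta_1:=(1,01)\ast(\cdot)$, i.e. the letter-by-letter substitutions $1\mapsto 10,\ 0\mapsto 0$ and $1\mapsto 1,\ 0\mapsto 01$ applied to a pair of words; these are precisely the trivial renormalizations $(10,0)$ and $(1,01)$ excluded from Definition 1, and on the level of rotation numbers they realise the two elementary Farey steps.

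The induction then runs as follows. The base case is $p/q=1/2$, where $(w_+,w_-)=(10,01)$, $u$ is the empty word, and the identity $w_+(z,z)-w_-(z,z)=1-z$ is immediate. For the inductive step, assume $(w_+,w_-)=(10u,01u)$. Applying $\Theta_0$ gives $10u\mapsto(10)(0)(\tilde u)=100\tilde u$ and $01u\mapsto(0)(10)(\tilde u)=010\tilde u$, where $\tilde u$ is the image of $u$ under $\Theta_0$; the two resulting words both read $0\tilde u$ from the third letter on and differ exactly in their first two letters, so the inductive hypothesis is preserved. Applying $\Theta_1$ gives $10u\mapsto(1)(01)(\hat u)=101\hat u$ and $01u\mapsto(01)(1)(\hat u)=011\hat u$, which both read $1\hat u$ from the third letter on and again differ only in the first two letters. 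Thus the property ``$w_+$ and $w_-$ agree from the third letter on and begin with $10$ and $01$ respectively'' propagates down the whole tree.

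Finally I would invoke the fact --- classical for rational rotation numbers and implicit in [1],[3],[6] --- that every periodic renormalization word pair is obtained from $(10,01)$ by a finite composition of $\Theta_0$ and $\Theta_1$. One way to make this self-contained is a descent: if $(w_+,w_-)=(10u,01u)$ with $|w_+|>2$, then $w_+$, being a factor of a mechanical (Christoffel) word, is balanced, so it cannot contain both $11$ and $00$; hence exactly one of ``every $1$ in $w_+$ is cyclically followed by $0$'' and ``every $0$ in $w_+$ is cyclically followed by $1$'' holds, which identifies the last operation used, and collapsing the corresponding blocks ($10\mapsto 1$, resp.\ $01\mapsto 0$) yields a strictly shorter pair of the same type; the descent terminates at $(10,01)$. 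Combining this with the inductive step above proves $w_+(z,z)-w_-(z,z)=1-z$ for every periodic renormalization. I expect the main obstacle to be exactly this last point: pinning down rigorously that the canonical periodic word pair attached to $p/q$ coincides with the one produced by the tree descent --- equivalently, the uniqueness of the well-ordered orbit of rotation number $p/q$, the balance property of its itineraries, and the exact matching of the trivial renormalizations with Farey mediants --- whereas the bookkeeping in the inductive step is routine.
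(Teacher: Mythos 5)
Your proposal follows essentially the same route as the paper's own proof: induction along the binary (Farey) tree of rationals rooted at $1/2$, generated by the two trivial renormalization substitutions $(10,0)$ and $(1,01)$, with the base pair $(10,01)$ and the observation that each substitution sends a pair of the form $(10u,01u)$ to a pair of the same form. Your additional descent argument via balancedness, justifying that every periodic renormalization pair is actually reached by this tree, supplies a step the paper only asserts, so the proposal is correct and, if anything, slightly more complete.
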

\begin{proposition} \rm
Suppose the kneading invariants $K_{f}=({k_ + },{k_ - })$ of $f \in {L_r}$ can be periodically renormalized via $w_{+}$ and $w_{-}$, and the length $|w_{+}|=|w_{-}|=n$, then

$${K_ + }(z,z) = \frac{{1 - t}}{2}(R{K_ + }({z^n},{z^n}) + R{K_ + }({z^n},{z^n})) + \frac{{{w_ - }(z,z)}}{{1 - {t^n}}}.$$
\begin{proof}\rm
Since $K(z,z) = {K_ + }(z,z) - {K_ - }(z,z) = 0$, there exists a real root $z\in(0,1)$ such that ${K_ + }(z,z) = {K_ - }(z,z)$, hence we can obtain ${K_ + }(z,z) = ({K_ + }(z,z) + {K_ - }(z,z))/2$. Suppose
$$\left\{ {\begin{array}{*{20}{c}}
{{k_ + } = w_ + ^{{a_0}}w_ - ^{{b_0}}w_ + ^{{a_1}}w_ - ^{{b_1}}....}\\
{{k_ - } = w_ - ^{{c_0}}w_ + ^{{d_0}}w_ - ^{{c_1}}w_ + ^{{d_1}}....}
\end{array}} \right.$$
where ${a_0} = {c_0} = 1$, next we have
$$\begin{array}{l}
{K_ + }(z,z) = {w_ + }(z,z) + \frac{{{z^{{a_0}n}}(1 - {z^{{b_0}n}})}}{{1 - {z^n}}}{w_ - }(z,z) + \frac{{{z^{{a_0}n + {b_0}n}}(1 - {z^{{a_1}n}})}}{{1 - {z^n}}}{w_ + }(z,z) + ....\\
 \ \ \ \  \ \ \  \ \ \ \ \ = \frac{{{w_ + }(z,z)}}{{1 - {z^n}}}[1 - {z^n} + \sum\limits_{j = 0}^\infty  {{z^{\sum\nolimits_{i = 0}^j {({a_i}n + {b_i}n)} }}(1 - {z^{{a_{j + 1}}n}})} ] + \\
 \ \ \ \  \ \ \  \ \ \ \ \ \frac{{{w_ - }(z,z)}}{{1 - {z^n}}}[{z^n} - \sum\limits_{j = 0}^\infty  {{z^{\sum\nolimits_{i = 0}^j {({a_i}n + {b_i}n)} }}(1 - {z^{{a_{j + 1}}n}})} ].
\end{array}$$
Similarly,
$$\begin{array}{l}
{K_ - }(z,z) = \frac{{{w_ - }(z,z)}}{{1 - {z^n}}}[1 - {z^n} + \sum\limits_{j = 0}^\infty  {{z^{\sum\nolimits_{i = 0}^j {({c_i}n + {d_i}n)} }}(1 - {z^{{c_{j + 1}}n}})} ] + \\
 \ \ \ \  \ \ \  \ \ \ \ \ \frac{{{w_ + }(z,z)}}{{1 - {z^n}}}[{z^n} - \sum\limits_{j = 0}^\infty  {{z^{\sum\nolimits_{i = 0}^j {({c_i}n + {d_i}n)} }}(1 - {z^{{c_{j + 1}}n}})} ].
\end{array}$$
Denote
$${Q_1} = \sum\limits_{j = 0}^\infty  {{z^{\sum\nolimits_{i = 0}^j {({a_i}n + {b_i}n)} }}(1 - {z^{{a_{j + 1}}n}})} , \ \ {Q_2} = \sum\limits_{j = 0}^\infty  {{z^{\sum\nolimits_{i = 0}^j {({c_i}n + {d_i}n)} }}(1 - {z^{{c_{j + 1}}n}})} .$$
Then obtain
$${K_ + }(z,z) = \frac{{{K_ + }(z,z) + {K_ - }(z,z)}}{2} = \frac{1}{2}(\frac{{{w_ + }(z,z)}}{{1 - {z^n}}} - \frac{{{w_ - }(z,z)}}{{1 - {z^n}}})(1 + {Q_1} - {Q_2}) + \frac{{{w_ - }(z,z)}}{{1 - {z^n}}}.$$
Calculate $RK({z^n},{z^n})$ with the same methods,
$$R{K_ + }({z^n},{z^n}) + R{K_ - }({z^n},{z^n}) = \frac{{{t^n}}}{{1 - {t^n}}}(1 + {Q_1} - {Q_2}).$$
Take into the expression of ${K_ + }(z,z)$:
$${K_ + }(z,z) = \frac{{1 - t}}{2}(R{K_ + }({z^n},{z^n}) + R{K_ + }({z^n},{z^n})) + \frac{{{w_ - }(z,z)}}{{1 - {t^n}}}.$$  $\hfill\square$

\end{proof}\end{proposition}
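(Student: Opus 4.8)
The plan is to convert the $*$-product structure of the renormalization into a grouping of the defining power series of $K_+$ and $K_-$ into blocks of length $n$, and then to recognize the resulting geometric-type sums as $RK_\pm$ evaluated at $z^n$. The combinatorial input is this: since $K_f=(k_+,k_-)$ is periodically renormalizable via $(w_+,w_-)$ with $|w_+|=|w_-|=n$, there are $*$-product expansions $k_+=w_+^{a_0}w_-^{b_0}w_+^{a_1}w_-^{b_1}\cdots$ and $k_-=w_-^{c_0}w_+^{d_0}w_-^{c_1}w_+^{d_1}\cdots$ with $a_0=c_0=1$, and by definition of the $*$-product $Rk_+=1^{a_0}0^{b_0}1^{a_1}0^{b_1}\cdots$ and $Rk_-=0^{c_0}1^{d_0}0^{c_1}1^{d_1}\cdots$. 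The crucial point, which uses $|w_+|=|w_-|=n$, is that $k_+$ is a concatenation of consecutive blocks each of length exactly $n$, and block number $m$ equals $w_+$ exactly when $(Rk_+)_m=1$ and equals $w_-$ exactly when $(Rk_+)_m=0$; the analogous statement holds for $k_-$ and $Rk_-$.

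Next I would substitute into $K_+(z,z)=\sum_{i\ge 0}(k_+)_i z^i$. Writing $i=nm+r$ with $0\le r<n$ and factoring out $z^{nm}$ gives $K_+(z,z)=\sum_{m\ge 0}z^{nm}B_m(z)$, where $B_m(z)$ is $w_+(z,z)$ or $w_-(z,z)$ according to the type of block $m$. Separating the sum over the two block types and using $\sum_{m\ge 0}z^{nm}=1/(1-z^n)$ together with $\sum_{m\ge 0}(Rk_+)_m z^{nm}=RK_+(z^n,z^n)$ (and likewise for $Rk_-$) yields the formal identities $K_+(z,z)=\bigl(w_+(z,z)-w_-(z,z)\bigr)RK_+(z^n,z^n)+w_-(z,z)/(1-z^n)$ and $K_-(z,z)=\bigl(w_+(z,z)-w_-(z,z)\bigr)RK_-(z^n,z^n)+w_-(z,z)/(1-z^n)$. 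Invoking the preceding Lemma, periodic renormalization words satisfy $w_+(z,z)-w_-(z,z)=1-z$ (written $1-t$ in the statement), so both identities reduce to $K_\pm(z,z)=(1-t)RK_\pm(z^n,z^n)+w_-(z,z)/(1-t^n)$.

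Finally I would pass to the relevant root. The polynomial $K(z,z)=K_+(z,z)-K_-(z,z)$ has a zero $z_0=1/\beta^*$ in $(0,1)$, and there $K_+(z_0,z_0)=K_-(z_0,z_0)$; since $1-z_0\neq 0$, the two identities above then force $RK_+(z_0^n,z_0^n)=RK_-(z_0^n,z_0^n)$, so at $z_0$ one may replace $RK_+(z^n,z^n)$ by the average $\frac12\bigl(RK_+(z^n,z^n)+RK_-(z^n,z^n)\bigr)$, which gives exactly the stated formula. (Equivalently, one uses $K_+(z,z)=\frac12\bigl(K_+(z,z)+K_-(z,z)\bigr)$ at $z_0$, which is the symmetrization a direct computation carries out.)

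The step I expect to cost the most care is the middle one: verifying that the offset of the $m$-th block inside $k_+$ is precisely $nm$ — exactly the place where $|w_+|=|w_-|$ is indispensable — and that the blocks of the other type contribute exactly $w_-(z,z)\bigl(1/(1-z^n)-RK_+(z^n,z^n)\bigr)$, i.e. that the exponents $\sum_{i\le j}(a_i+b_i)n$ produced by a raw block-by-block expansion are precisely the positions of the $0$'s in $Rk_+$. This is just a rearrangement of an absolutely convergent series for $|z|<1$; the auxiliary sums $Q_1,Q_2$ are that rearrangement written out, but it should be organized cleanly rather than pushed through term by term.
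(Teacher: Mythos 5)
Your proposal is correct and follows essentially the same route as the paper: both expand $K_{\pm}(z,z)$ into consecutive length-$n$ blocks dictated by $Rk_{\pm}$, invoke the lemma $w_{+}(z,z)-w_{-}(z,z)=1-z$, and symmetrize at the root of $K(z,z)$, where $K_{+}=K_{-}$ (equivalently $RK_{+}(z^{n},z^{n})=RK_{-}(z^{n},z^{n})$) justifies replacing $RK_{+}$ by the average $\tfrac12(RK_{+}+RK_{-})$. Your intermediate identity $K_{\pm}(z,z)=\bigl(w_{+}(z,z)-w_{-}(z,z)\bigr)RK_{\pm}(z^{n},z^{n})+w_{-}(z,z)/(1-z^{n})$ is simply a cleaner packaging of the paper's block-by-block computation with the sums $Q_{1},Q_{2}$.
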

\begin{remark}\rm
The following two expressions of $\alpha$ are identical,
$$\left\{ {\begin{array}{*{20}{c}}
{\alpha  = (\beta  - 1)(\sum\limits_{i = 0}^\infty  {\frac{{{k_ + }(i)}}{{{\beta ^i}}} - 1),} }\\
{\alpha  = (\beta  - 1)\sum\limits_{i = 1}^\infty  {\frac{{{k_0}(i)}}{{{\beta ^i}}}.} }
\end{array}} \right.$$
In fact, $\sum\limits_{i = 0}^\infty  {\frac{{{k_ + }(i)}}{{{\beta ^i}}}}={K_ + }(z,z)$. Let $z=1/\beta$, we can obtain the new expression of $\alpha$:
$$\alpha  = (\beta  - 1)(\sum\limits_{i = 0}^\infty  {\frac{{{k_ + }(i)}}{{{\beta ^i}}} - 1)}  = (\frac{1}{z} - 1)[\frac{{1 - z}}{2}(R{K_ + }({z^n},{z^n}) + R{K_ + }({z^n},{z^n})) + \frac{{{w_ - }(z,z)}}{{1 - {z^n}}} - 1].$$
\end{remark}
Next we calculate two examples to verify the expression of $\alpha$.
\begin{example}\rm
Suppose $f \in {L_r}$ and the kneading invariants $K_{f}=({k_ + },{k_ - })=((100101)^{\infty},(0110)^{\infty})$ can be periodically renormalized via $w_{+}=(10)$ and $w_{-}=(01)$. Then $RK = ({(100)^\infty },{(01)^\infty })$, if calculate with previous expression of $\alpha$,
$$\alpha  = (\beta  - 1)\sum\limits_{i = 1}^\infty  {\frac{{{k_0}(i)}}{{{\beta ^i}}}}  = (\frac{1}{z} - 1)\frac{{{z^4} + {z^7} + {z^9}}}{{1 - {z^9}}} \approx 0.418876,$$
where $z$ is the smallest real root of $K(z,z)$. If calculate with the new expression,
$$\alpha  = (\frac{1}{z} - 1)[\frac{{1 - z}}{2}(\frac{1}{{1 - {z^6}}} + \frac{{{z^2}}}{{1 - {z^4}}}) + \frac{z}{{1 - {z^2}}} - 1] \approx 0.418876.$$
\end{example}
\begin{example}\rm
Suppose $f \in {L_r}$ and the kneading invariants $K_{f}=({k_ + },{k_ - })=((100010010)^{\infty},(010100)^{\infty})$ can be periodically renormalized via $w_{+}=(100)$ and $w_{-}=(010)$. Then $RK = ({(100)^\infty },{(01)^\infty })$, if calculate with previous expression,
$$\alpha  = (\beta  - 1)\sum\limits_{i = 1}^\infty  {\frac{{{k_0}(i)}}{{{\beta ^i}}}}  = (\frac{1}{z} - 1)\frac{{{z^4} + {z^7} + {z^9}}}{{1 - {z^9}}}\approx0.282122,$$
where $z$ is the smallest real root of $K(z,z)$. If calculate with the new expression,
$$\alpha  = (\frac{1}{z} - 1)[\frac{{1 - z}}{2}(\frac{1}{{1 - {z^9}}} + \frac{{{z^3}}}{{1 - {z^6}}}) + \frac{z}{{1 - {z^3}}} - 1]\approx0.282122.$$
\end{example}

\par Now we have obtained the new expression of $\alpha$ when the renormalization are periodic. Suppose $f \in {L_r}$ can be periodically renormalized $m$ times. Denote $(\beta,\alpha)$ be the corresponding parameter of initial kneading invariants, i.e., the kneading invariants before all the renormalizations. And $({\beta _{m + 1}},{\alpha _{m + 1}})$ be the corresponding parameter of $({R^m}{k_ + },{R^m}{k_ - })$, i.e., the kneading invariants after $m$ times renormalizations. The whole process follows:
$$({k_ + },{k_ - })\stackrel{w_{1}^{+},w_{1}^{-}}{\longrightarrow}(R{k_ + },R{k_ - }) \stackrel{w_{2}^{+},w_{2}^{-}}{\longrightarrow}\cdots \stackrel{w_{m}^{+},w_{m}^{-}}{\longrightarrow}({R^m}{k_ + },{R^m}{k_ - }).$$
Firstly, begin from $({\beta _{m + 1}},{\alpha _{m + 1}})$, use the new expression of $\alpha$ and $(w_{m}^{+},w_{m}^{-})$, we can obtain $\alpha_{m}$. Then regard as $({\beta _{m }},{\alpha _{m }})$ as the last level, and repeat the process, finally we can obtain the value of $\alpha$. Notice that here only consider the case that all renormalizations are periodic, since all the renormalization words are periodic and ${w_ + }(z,z) - {w_ - }(z,z) = 1 - z$ holds.

\

\section*{Acknowledgements}

We would like to thank the referees for reading this paper very carefully. They suggested many improvements to a previous version of this paper. Thanks Zhou and Wang for the help of matlab skills. This work is supported by the Excellent Dissertation Cultivation Funds of Wuhan University of Technology (2018-YS-077).
\\

\section*{References}
\setcounter{equation}{0}


\vspace{0.5cm}

\end{document}